\numberwithin{equation}{section}
\newtheorem{thm}{Theorem}[section]
\newtheorem{prop}[thm]{Proposition}
\newtheorem{lm}[thm]{Lemma}
\theoremstyle{remark}
\newtheorem{rem}[thm]{Remark}
\newtheorem*{rem*}{Remark}
\theoremstyle{definition}
\newtheorem{df}[thm]{Definition}
\newtheorem*{df*}{Definition}
\newtheorem*{ex*}{Example}
\newenvironment{entry}
{\begin{list}{X}%
  {%
      \setlength{\labelwidth}{55pt}%
      \setlength{\leftmargin}{\labelwidth}
      \addtolength{\leftmargin}{\labelsep}%
      \setlength{\itemsep}{.4pc}
   }%
}%
{\end{list}}
\newcounter{vremennyj}
\newcommand\cond[1]{\setcounter{vremennyj}{\theenumi}\setcounter{enumi}{#1}\labelenumi\setcounter{enumi}{\thevremennyj}}
\newcommand{\s}{\sigma}
\newcommand{\rk}{\operatorname{rk}}
\newcommand{\f}{\varphi}
\newcommand{\e}{\varepsilon}
\newcommand{\C}{\mathbb{C}}
\newcommand{\R}{\mathbb{R}}
\newcommand{\Z}{\mathbb{Z}}
\newcommand{\N}{\mathbb{N}}
\newcommand{\E}{\mathbb{E}}
\newcommand{\X}{\mathbb{X}}
\newcommand{\cE}{\mathcal{E}}
\newcommand{\cJ}{\mathcal{J}}
\newcommand{\1}{\mathbf{1}}
\newcommand{\BMO}{\ensuremath\textup{BMO}}
\newcommand{\BMOs}{\ensuremath\operatorname{\mathcal{BMO}}}
\newcommand{\La}{\langle }
\newcommand{\Ra}{\rangle }
\newcommand{\om}{\omega}
\newcommand{\cH}{\mathcal{H}}
\newcommand{\cP}{\mathcal{P}}
\newcommand{\cC}{\mathcal{C}}
\newcommand{\cL}{\mathcal{L}}
\newcommand{\cG}{\mathcal{G}}
\newcommand{\cD}{\mathcal{D}}
\newcommand{\fb}{\mathbf{\dot f}}
\newcommand{\gb}{\mathbf{\dot g}}
\newcommand{\fdot}{\,\cdot\,}
\newcommand{\chl}{\operatorname{child}}
\newcommand{\chld}{\operatorname{child}}
\newcommand\spn{\operatorname{span}}
\newcommand{\rf}{\operatorname{r}}
\newcommand{\Av}{\operatorname{A\!v}}
\newcommand{\Aio}{\ensuremath\mathfrak A_{-\infty}^{0}}
\newcommand{\Aif}{\ensuremath\mathfrak A_{-\infty}^{0, \textup{fin}}}
\renewcommand{\labelenumi}{(\roman{enumi})}
\newcommand{\ci}[1]{_{ {}_{\scriptstyle #1}}}
\newcommand{\ti}[1]{_{\scriptstyle \text{\rm #1}}}
\newcommand{\crc}[1]{\overset{\raisebox{-.3ex}{$\scriptscriptstyle\circ$}}{#1}}
\newcommand{\wt}{\widetilde}
\newcommand{\shto}{\raisebox{.3ex}{$\scriptscriptstyle\rightarrow$}\!}
\begin{document}

\title
{Commutators, paraproducts and BMO in non-homogeneous martingale settings}

\author{Sergei Treil}

\thanks{This material is based on the work supported by the National Science Foundation under the grant  DMS-0800876. Any opinions, findings and conclusions or recommendations expressed in this material are those of the author and do not necessarily reflect the views of the National Science Foundation. 
}

\address{Department of Mathematics, Brown University, 151 Thayer
Str./Box 1917,      
 Providence, RI  02912, USA }
\email{treil@math.brown.edu}
\urladdr{http://www.math.brown.edu/\~{}treil}

\subjclass[2000]{Primary 42B30,  42B25, 42C15}
\keywords{paraproducts, commutators, BMO}
\date{}

\begin{abstract}

In this paper we investigate the relations between (martingale) $\BMO$ spaces, paraproducts and commutators in non-homogeneous martingale settings. Some new, and one might add unexpected, results are obtained. Some alternative proof of known results are also presented. 
\end{abstract}

\maketitle

\setcounter{tocdepth}{1}
\tableofcontents

\section*{Notation}

\begin{entry}

\item[$\X$] real line $\R$ or its subinterval. 
\item[$\cL$] Lattice of intervals in $\X$.

\item[$\1\ci I$] characteristic function of the set $I$.
\item[$\E\ci I$] averaging operator, $\E\ci I f = \1\ci I |I|^{-1}\int_I f(x) dx$.

\item[$\Delta\ci I$] martingale difference operator, $\Delta\ci I = \Bigl(\sum_{J\in\chld(I)} \E\ci J \Bigr) - \E\ci I$; here $\chld(I)$ denotes the collection of the ``children'' of $I$. 

\item[$\La f\Ra\ci I$] average value of the function $f$, $\La f\Ra\ci I = \fint_I f(x)dx := |I|^{-1} \int_I f(x) dx$.
%
%
%
%
%
%
%
%
\end{entry}

\section{Introduction and main objects}

This paper was started in attempt to understand the relations between (martingale) commutators, paraproducts and space $\BMO$. Initial hope was to cover both one-parameter and multi-parameter cases, but it became clear pretty soon that in the general, non-homogeneous case, even one-parameter situation is far from well understood.

While the results about $H^1$--$\BMO$ duality for general martingales  are well known and can be considered classical, paraproducts and commutators were studied mostly for regular $r$-adic martingales. 

In this paper several new, and one might add unexpected, results are obtained for the non-homogeneous situation. Let me list some of them here; for the definitions and exact statements the reader should look in the sections that follow.

\begin{itemize}
	\item Despite what one might expect, the condition $b\in \BMO$ is not necessary (although it is of course, sufficient) for the boundedness of the paraproduct $\pi_b$ in $L^p$. This means, in particular, that unlike the homogeneous case it is impossible to characterize $b\in\BMO$ via boundedness of commutators of the multiplication operator $M_b$  and martingale multipliers.  
	
	The condition $b\in \BMO$ however is necessary and sufficient for the boundedness of the so-called extended paraproduct $\pi_b^{(*)}$. 
	\item The necessary an sufficient condition for the $L^p$ boundedness of the paraproduct is, as one might expect, that it is enough to check the boundedness on the characteristic functions of intervals. This statement is well-known and now almost trivial for $p=2$; the result for $p\ne 2$ is new and its proof is rather complicated. 
	
	Note, that this condition depends on $p$, unlike the condition $b\in \BMO$, which guarantees the  the boundedness of $\pi^{(*)}_b$ in all $L^p$, $p\in(1, \infty)$. 
	
	\item The condition $b\in \BMO$ is, as one might expect, sufficient for the $L^p$ boundedness ($p\in(1, \infty)$) of the commutator $[M_b, T] = M_b T -TM_b$ of the multiplication operator $M_b$ and a bounded martingale transform $T$. This condition (up to some technical details) is also necessary for the boundedness of the commutator, provided that the martingale transform $T$ satisfies some ``mixing property''. 
	
This result generalizes the classical result of 
S.~Janson \cite{Janson-CommMartBMO_1981}, which gives the description of $\BMO$ via commutators in the case of regular $r$-adic martingales. The ``mixing properties'' that the martingale transform should satisfy generalize (and in the case of regular $r$-adic lattice coincide with) the notion of the non-degenerate martingale transform, considered in \cite{Janson-CommMartBMO_1981}. 

The ``mixing condition'' introduced in this paper is necessarily more complicated than the non-degeneracy condition in \cite{Janson-CommMartBMO_1981}. This is mainly due to the fact that it includes a condition that was ``hidden'' (trivially satisfied) in the homogeneous case. An example, demonstrating that this ``hidden'' condition is essential is presented in the paper. 

\item It is shown in this paper that in general non-homogeneous case the martingale difference spaces $D\ci I = \Delta\ci I L^p$ do not form the so-called \emph{strong unconditional basis} in $L^p$, $p\ne 2$ (more precisely, in the martingale Hardy space $H^p$, which is, in general for $p\in (1,\infty)$ a subspace of $L^p$ with an equivalent metric). Essentially that means that it is impossible to define an equivalent norm in $H^p$ using only the norms of martingale differences $\|\Delta\ci I f\|_p$. 

An equivalent statement is that, unlike the case $p=2$, for $p\ne 2$ there exists an unbounded in $H^p$ martingale transform $T$ (see the definition in the subsections that follow) with uniformly bounded blocks $T\ci I$. 
\end{itemize}

Few word about general setup used in the paper. We do not work here in the settings of martingale spaces, because we want to include the situation with infinite measure, like the standard dyadic lattice in $\R^n$. While getting  results in the case of infinite measure from the corresponding result in the martingale case (the finite measure) is usually pretty easy, there are some delicate situation, when one has to be careful stating the result. 
(Of course, usually after the results are stated, they are quites easy to prove, but stating 
 the results require some attention).

For example, while this is well known to specialists, it might be a surprise to a reader just casually acquainted  with martingale Hardy spaces and BMO, that for  the standard dyadic lattice in $\R$ (and in $\R^n$) one can find a function $b$ in dyadic $\BMO$ such that the martingale difference decomposition $\sum_{I\in\cD} \Delta\ci I b$ diverges a.e. I haven't seen this mentioned anywhere in the literature, probably everybody had to notice this fact for him/her-self. 

So, in this paper we work on the real line $\R$, and our $\sigma$-algebras are generated by  disjoint intervals. While practically everything can be stated and proved in the setting of arbitrary measure space, we want to avoid non-essential technical details and concentrate on main ideas. For example, at some point we will be using Fefferman--Stein maximal theorem, which is  stated and proved for  $\R^n$ but not for an arbitrary measure space. 

The settings on the real line covers the example we are mostly interested in: the case of $\R^n$ with the standard dyadic lattice and with an arbitrary Radon measure $\mu$, where the averages are taken with respect to $\mu$. Such situation is typical in the non-homogeneous harmonic analysis, cf \cite{NTV-Acta-2003,Tolsa-Acta-2003, Tolsa-H1-2001}


\subsection{Lattices, expectations and martingale differences}
Let $\mathbb X$ be either real line $\R$ or its subinterval (finite or infinite)
A lattice $\cL$ is a collection of non-trivial finite (bounded) intervals of $\X$ (say for definiteness of form $[a, b)$) with the following properties.
\begin{enumerate}
    \item $\cL$ is a union of \emph{generations} $\cL_k$, $k\in \Z$, where each generation is a collection of disjoint intervals, covering $\X$.
    \item for each $k\in\Z$, the covering $\cL_{k+1}$ is a finite refinement of the covering $\cL_k$, i.e.~each interval $I\in \cL_{k}$ is a finite union of disjoint  intervals $J\in\cL_{k+1}$. We allow the situation where  there is only one  such interval $J$ (i.e.~$J=I$); this means that $I\in\cL_k$ also belongs to the generation $\cL_{k+1}$. 
\end{enumerate}

\begin{ex*}
The main example we have in mind is the following one. Consider the space $\R^d$ with a Radon measure $\mu$ and the standard dyadic lattice. Let us represent  cubes $Q_k=Q_k^1 =[0, 2^k)^d$ by the intervals $I_k^1=[0, \mu(Q_k))\subset \R$. For each cube $Q_{k}^1$ we pick some ordering of its children (dyadic subcubes of $Q_{k}^1$ with side $2^{k-1}$) with $Q^1_{k-1}= [0,2^{k-1})^d$ being the first, and split $I_{k}^1$ into disjoint union of intervals $I_{k-1}^j$, of form $[a,b)$, $|I_{k-1}^j| =\mu(Q_{k-1}^j)$, with the ordering of the intervals
 $I_{k-1}^j$ given by the ordering of $Q_{k-1}^j$.
 
We then can order children of $Q_{k}^j$, $j\ne 1$ and represent them as subintervals of $I_k^j$, then their children, and so on.

So we have represented the standard dyadic lattice in the first ``octant'' $[0, \infty)^d$ of $\R^d$ with the measure $\mu$ by our lattice (with $\X=[0, \mu([0, \infty)^d)$), so the measure of each dyadic cube equals the length of the corresponding interval. Note, then the dyadic cubes $Q$, $\mu(Q)=0$ are ignored, the corresponding intervals are empty sets.

If the measure $\mu$ is finite, we can represent the dyadic lattice in all $\R^d$ as our lattice: in general, we can only put 2 ``octants'' on the line, but the dyadic lattice on the whole space can be represented as a finite disjoint union of our lattices.

\end{ex*}

\subsubsection{More definitions}
For an interval $I\in\cL$, let $\rk(I)$ be the rank of the interval $I$, i.e.~the largest number $k$ such that  $\cL_k\ni I$. 


For an interval $I\in \cL$, $\rk (I) =k$ a \emph{child} of $I$ is an interval $J\in\cL_{k+1}$ such that $J\subset I$ (note that by the definition of $\rk(I)$ we cannot have $J=I$, so we can write $J\subsetneqq I$). The collection of all children of $I$ is denoted by $\chl(I)$.

Let $r\in \Z$. We will call a lattice $\cL$ a $r$-adic lattice, if every generation $\cL_k$ consists of intervals of equal length, and the generation $\cL_{k+1}$ is obtained from $\cL_k$ by dividing every interval $I\in\cL_k$ into $r$ equal subintervals.

When $r=2$ we have a dyadic lattice in $\R$; if $r=2^d$, the lattice represents a dyadic lattice in $\R^d$.

We say, that a lattice $\cL$ is \emph{homogeneous}
if
\begin{enumerate}
\item Each interval $I\in \cL_{k}$ is a union of \emph{at most} $r$ ($r<\infty$) intervals $J\in \cL_{k+1}$
\item There exists a constant $K<\infty$ such that $|I|/|J|\le K$ for every $I\in \cL_k$ and every $J\in \cL_{k+1}$, $J\subset I$.  
\end{enumerate}

We say that the lattice $\cL$ is \emph{proper}, if any interval $I\in \cL_{k+1}$ is a \emph{proper} subinterval of an interval $J\in\cL$. in this case any interval $I\in\cL$ belongs to a unique generation.

\subsubsection{Conditional expectations and martingale differences} For an interval $I\in \cL$ let $\E\ci I$ be the averaging operator,
\[
\E\ci I f:= \left( |I|^{-1} \int_I f(x)dx \right) \1\ci I =: \La f\Ra\ci I \1\ci I
\]
and let $\E_k$ be the ``conditional expectation'',
\[
\E_k f = \sum_{I\in\cL_k} \E\ci I f.
\]
Consider martingale differences $\Delta\ci I$, $\Delta_k$
\[
\Delta\ci I = \Bigl( \sum_{J\in\chl(I)} \E\ci J  \Bigr)-\E\ci I, \qquad \Delta_k = \E_k - \E_{k-1} 
= \sum_{I\in \cL: \rk(I) = k-1} \Delta\ci I 
\]
(note that we cannot write $\Delta_k = \sum_{I\in \cL_{k-1}} \Delta\ci I$ here). 


Let $\mathfrak A_k$ be the $\s$-algebra generated by $\cL_k$ (i.e.~countable unions of intervals in $\cL_k$).  Let $\mathfrak A_\infty$
be the smallest $\s$-algebra containing all $\mathfrak A_k$, $k\in \Z$, and let $\mathfrak A_{-\infty}$ be the largest $\s$-algebra containing in all $\mathfrak A_k$, $\mathfrak A_{-\infty} = \cap_{k\in\Z} \mathfrak A_k$.

The structure of $\s$-algebras $\mathfrak A_{\infty}$ and $\mathfrak A_{-\infty}$ is easy to understand. Thus, $\mathfrak A_{-\infty}$ is the $\s$-algebra generated by all the intervals $I$ of form
\[
I =\bigcup_{k\in Z} I_k, \qquad \text{where}\quad I_k\in\cL_k, \ I_{k}\subset I_{k-1}.
\]
Note that $\X$ is a disjoint union of such intervals $I$ and at most countably many points (we might need to add left endpoints to the intervals $I$, if they happen to be open intervals). It is possible that there is only one such $I$,  $I=\X$, so the $\s$-algebra $\mathfrak A_{-\infty}$ is trivial. Let us denote the collection of such intervals $I$ by $\mathfrak A_{-\infty}^0$.
Define
\begin{equation}
\label{Aif}
\Aif := \{ I\in \Aio: |I|<\infty \};  
\end{equation}
``fin'' here is to remind that the set consists of intervals of finite measure.

For example, in the case of the standard dyadic lattice in $\R$, we have  that $\mathfrak A_{-\infty}^0 =\{[0, \infty), (-\infty, 0)\}$ and so $\Aif =\varnothing$.  

Instead of describing $\mathfrak A_\infty$, let us describe the corresponding measurable functions. Namely, a function $f$ is $\mathfrak A_\infty$-measurable, if it is Borel measurable and it is constant on intervals $I$
\[
I =\bigcap_{k\in\Z} I_k, \qquad \text{where}\quad I_k\in\cL_k, \ I_{k}\subset I_{k-1}.
\]
Clearly, such intervals $I$ do not intersect, so there can only be countably many of them.
Note, that if we assume that for every $x\in \X$
\begin{align}
\label{1.1}
\lim_{k\to +\infty}  |I_k(x)|  = 0,
\end{align}
where $I_k(x)$ is the unique interval in $\cL_k$ containing $x$,
then $\mathfrak A_\infty$ is the Borel $\s$-algebra.

\subsection{Martingale difference decomposition of \texorpdfstring{$L^p$}{L**p} spaces} In this paper we always assume that all functions are $\mathfrak A_\infty$-measurable.

One can easily see that
\[
\sum_{\substack{I\in\cL\\ m\le \rk(I) <n} }\Delta\ci I = \sum_{m<k\le n} \Delta_k = \E_n - \E_m .
\]
Note that for any $f\in L^p$ (we assumed here that all the functions are $\mathfrak A_{-\infty}$-measurable)
\[
\E_n f\to f \qquad\text{as} \ n\to +\infty
\]
where the convergence is a.e.~(for $p\in[1, \infty]$), and in the $L^p$ norm for $p\in [1, \infty)$.

To compute the limit $\E_m f$ as $m\to -\infty$, we notice that for a bounded compactly supported $f$ we can estimate $|E\ci I f | \le C /|I|$, so if $|I_n|\to \infty $ as $n\to \infty$, then for such functions and for $p\in (1, \infty]$
\[
\lim_{n\to\infty} \|\E\ci{I_n} f \|_p  = 0.
\]
Since bounded compactly supported functions are dense in $L^p$, $p\in [1, \infty)$, and operators $\E_n$ are contractions in $L^p$, we get applying $\e/3$ Theorem, 
that for $f\in L^p$, $p<\infty$
\[
\E_{-n} f \to \sum_{I \in \Aif } \E\ci I f  \qquad \text{as} \quad n\to \infty  ,
\]
where the convergence is in $L^p$ for $p\in (1, \infty)$ and in a weaker sense (say $L^1$ convergence on compacts) for $p=1$.

Therefore any function $f\in L^p$, $p\in (1, \infty)$ can be represented as $L^p$ convergent series
\begin{align}
\label{MartDiffDecomp}
f = \sum_{I\in \cL} \Delta\ci I f + \sum_{I \in \Aif} \E\ci I f
& =\sum_{k\in \Z} \Delta_k f + \sum_{I \in \Aif } \E\ci I f
\\  \notag
& = \sum_{k\in \Z} \Delta_k f  +\Delta_{-\infty} f;
\end{align}
we use the notation $\Delta_{-\infty} = \E_{-\infty} := \sum_{I \in \Aif } \E\ci I$ here.

We had shown the convergence of partial sums $\sum_{m}^n$, but in fact the convergence of the series is unconditional (independent of ordering).

\subsection{Martingale Hardy spaces}
\label{MartHardySp}
Everything in this subsection is well known, we present it only for the convenience of the reader. 

Let us recall  the  classical result by D.~Burkhold\-er, which in our notation can be stated as follows.

\begin{thm}[D.~Burkholder]
\label{t-Burkh}
Let $f$, $g$ be two locally integrable functions on $\X$ such that  a.e.~on $\X$
\[
|\Delta\ci I f | \le |\Delta\ci I g|\quad \forall I \in \cL , \qquad \text{and} \qquad |\E\ci I f | \le |\E\ci I g| \quad \forall I \in \mathfrak A_{-\infty}^0, |I|<\infty .
\]
Then
\[
\|f\|_p \le (p^* - 1) \|g\|_p,
\]
where $p^*= \max\{p, p'\}$, $1/p+ 1/p' =1$.
\end{thm}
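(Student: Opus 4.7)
The plan is to reduce to the classical finite-filtration differential-subordination inequality of Burkholder via the martingale decomposition \eqref{MartDiffDecomp}, then pass to the limit. For integers $m<n$, set
\[
f_k := \Delta_{-\infty} f + \sum_{j=m+1}^{k} \Delta_j f = \Delta_{-\infty} f + \E_k f - \E_m f,\qquad m\le k\le n,
\]
and define $g_k$ analogously. Then $f_m = \Delta_{-\infty} f$, and $(f_k)_{k=m}^{n}$ is a finite-length martingale adapted to the filtration $\mathfrak A_{-\infty}\subset \mathfrak A_m\subset \mathfrak A_{m+1}\subset \cdots \subset \mathfrak A_n$, with increments $df_k = \Delta_k f$ for $k=m+1,\dots,n$; likewise for $g_k$.

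Next, I would check the two differential-subordination hypotheses. The increment $df_k = \Delta_k f = \sum_{I\in\cL,\,\rk(I)=k-1}\Delta\ci I f$ is a pointwise sum over pairwise disjoint intervals, so the assumption $|\Delta\ci I f|\le|\Delta\ci I g|$ yields $|df_k|\le|dg_k|$ a.e. For the base value, $f_m = \sum_{I\in\Aif}\E\ci I f$ is supported in the disjoint union $\bigcup_{I\in\Aif} I$; on each $I\in\Aif$ the assumption $|\E\ci I f|\le|\E\ci I g|$ gives $|f_m|\le|g_m|$, while off $\bigcup_{I\in\Aif} I$ both $f_m$ and $g_m$ vanish. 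Hence $|f_m|\le|g_m|$ a.e.

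I would then apply the classical Burkholder inequality to the finite-step martingales $(f_k)_{k=m}^n$, $(g_k)_{k=m}^n$. The Bellman (``Burkholder function'') proof is a pointwise integration argument, so it applies on $(\X,dx)$ regardless of finiteness of the measure, and gives
\[
\|f_n\|_p \le (p^*-1)\|g_n\|_p.
\]
By \eqref{MartDiffDecomp} together with the identity $f_n = \Delta_{-\infty} f + \E_n f - \E_m f$ one has $f_n\to f$ and $g_n\to g$ in $L^p$ as $m\to-\infty$ and $n\to+\infty$, so passing to the limit yields $\|f\|_p\le (p^*-1)\|g\|_p$.

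The only delicate point, and the main obstacle, is the handling of the coarse tail $\mathfrak A_{-\infty}$: its atoms of infinite measure are excluded from the hypothesis $|\E\ci I f|\le|\E\ci I g|$, but they are also irrelevant, since for $f\in L^p$, $p<\infty$, the conditional expectation $\Delta_{-\infty} f$ vanishes on them (and likewise for $g$). Thus the base subordination holds a.e.\ on all of $\X$, and the classical finite-index Burkholder result applies without modification.
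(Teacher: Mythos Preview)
Your approach is essentially the same as the paper's: the paper does not give a proof but simply cites Burkholder's classical result (stated there for discrete-time martingales on a probability space) and says the general statement ``can be easily obtained from this special case by easy and standard reasoning, which we skip.'' Your proposal is precisely that standard reasoning---truncate to finite-index martingales, apply the classical inequality, and pass to the limit---so you are filling in exactly what the paper omits.

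One small technical gap: in the last step you invoke \eqref{MartDiffDecomp} to say $f_n\to f$ in $L^p$, but that decomposition is established in the paper only for $f\in L^p$, which is part of what you are trying to prove (the hypothesis only has $f$ locally integrable). The clean fix is to assume without loss of generality that $g\in L^p$ (otherwise the conclusion is trivial), so $g_n\to g$ in $L^p$ and $\|g_n\|_p\to\|g\|_p$; then note that $f_n\to f$ a.e.\ (via $\E_n f\to f$ and $\E_m f\to\Delta_{-\infty}f$ a.e.) and apply Fatou's lemma:
\[
\|f\|_p\le \liminf_{m\to-\infty,\,n\to+\infty}\|f_n\|_p\le (p^*-1)\lim\|g_n\|_p=(p^*-1)\|g\|_p.
\]
This also sidesteps any worry about whether $f$ is a priori in $L^p$. (Strictly speaking, the theorem as stated needs some such implicit finiteness assumption: on the standard dyadic line, $f\equiv 1$ has all $\Delta\ci I f=0$ and $\Aif=\varnothing$, so the hypotheses hold with $g=0$, yet $\|f\|_p=\infty$. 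The intended reading is that $f$ is recoverable from its martingale data, which is automatic once one knows---or proves via Fatou---that $f\in L^p$.)
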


In \cite{Burkholder_LNM-1991} this theorem was proved for arbitrary discrete time martingales, which immediately gives the above theorem in the special case $|\X| = 1$, $\mathfrak A_k =\{ \X \}$ for $k\le 0$. The general statement can be easily obtained from this special case by easy and standard reasoning, which we skip.

Burkholder's theorem implies that for $|\alpha_k|=1$
\[
\frac1C \|f|_p \le
\biggl\| \sum_{k\in \Z\cup\{-\infty\} }\alpha_k \Delta_k f  \biggr\|_p
\le C \|f\|_p,
\]
where $C =p^*-1$.

Taking for $\alpha_k$ independent Bernoulli random variables, taking values $\pm1$ with probability $1/2$, and taking expectation one gets
\[
\frac1{C^p} \|f|_p^p \le
\int_\Omega \int_\X
\biggl|  
\sum_{k\in \Z\cup\{-\infty\} }\alpha_k(\om) \Delta_k f(x)  \biggr|^p dx dP(\om)\le C^p \|f\|_p,
\]
Changing order of integration, and noticing that by Khinchine inequality for any sequence of $x_k\in\C$, the averages
\[
\Biggl(\int_\Omega  
\biggl|  
\sum_{k }\alpha_k(\om) x_k  \biggr|^p  dP(\om)
\Biggr)^{1/p}
\]
and
\[
\Biggl(\int_\Omega  
\biggl|  
\sum_{k }\alpha_k(\om) x_k  \biggr|^2  dP(\om)
\Biggr)^{1/2}
= \Bigl( \sum_{k} |x_k|^2 \Bigr)^{1/2}
\]
are equivalent with constants depending only on $p$, we can see that the quantity $\| \wt S f \|_p$, where $\wt S f$ is the so-called extended square function
\begin{equation}
\label{SqFunct}
\wt S f (x) = \biggl( \sum_{k\in\Z\cup\{-\infty\}} |\Delta_k f(x)|^2 \biggl)^{1/2} ,
\end{equation}
defines an equivalent norm in $L^p$ (recall that we assume that all functions are $\mathfrak A_\infty$  measurable).

In particular, this implies that for $f\in L^p$ the sum in \eqref{MartDiffDecomp} converges unconditionally (independently of ordering) in $L^p$. Note, that if for a formal sum $f$ of form \eqref{MartDiffDecomp} we have $\wt Sf\in L^p$, then the series converges unconditionally in $L^p$, so $L^p$, $p\in (1, \infty)$ is isomorphic to the set of formal series  \eqref{MartDiffDecomp} with $\wt Sf\in L^p$

Let us also introduce the classical square function $S$, where we do not add the term $|\Delta_{-\infty} f |^2$, 
\begin{equation}
\label{SqFunct-1}
 S f (x) = \biggl( \sum_{k\in\Z} |\Delta_k f(x)|^2 \biggl)^{1/2} .
\end{equation}

The situation for $p=1$ is more interesting. Recall the classical result of Burges Davis \cite{Davis-IntegrSqFunct_1970} comparing maximal function with the square function.
Let us recall that the maximal function $M=M_\cL$ is defined by
\[
M f (x) := \sup_{I\in\cL : x\in I} | \E\ci I f | = \sup_{k\in\Z} |\E_k f(x) |
\]

\begin{thm}[B.~Davis, 1970]
\label{t.Sq-Max}
Let $M=M_\cL$ be the maximal function defined above, and let $S(f)$ be the square function defined by \eqref{SqFunct}. Then
\[
\frac1C \| Mf\|_1 \le \|\wt S f\|_1 \le C \| Mf\|_1,
\]
where $C$ is an absolute constant.
\end{thm}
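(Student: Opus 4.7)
The plan is to reduce the statement to the classical Davis theorem for discrete-time martingales on probability spaces, by decomposing $\X$ according to $\Aio$ and subtracting off the $\Delta_{-\infty}$ contribution on each piece. The first observation is that $\X$ is a disjoint union (up to at most countably many endpoints) of the intervals $I_0 \in \Aio$, each of which is $\mathfrak A_k$-measurable for \emph{every} $k \in \Z$. Consequently, all the operators in sight ($\E_k$, $\Delta\ci I$, $M$, $\wt S$) respect this decomposition, and it suffices to prove the two-sided inequality on each $I_0 \in \Aio$ separately.

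On a piece $I_0 \in \Aif$ of finite measure, the restricted filtration $(\mathfrak A_k \cap I_0)_{k\in\Z}$ makes $(I_0, dx/|I_0|)$ into a probability space. Write $c = \La f\Ra\ci{I_0}$ and $g = (f-c)\1\ci{I_0}$. Then $\Delta_k g = \Delta_k f$ on $I_0$ for every $k \in \Z$ and $\Delta_{-\infty} g = 0$, so $\wt S g|_{I_0} = S g|_{I_0} = S f|_{I_0}$, while $\wt S f|_{I_0}^2 = S f|_{I_0}^2 + c^2 \1\ci{I_0}$. The function $g$ is a genuine two-sided martingale with $\E_k g \to 0$ in $L^1$ as $k \to -\infty$ and $\E_k g \to g$ as $k \to +\infty$, so classical Davis applies (splitting the two-sided sum at $k=0$ and using the one-sided theorem on each half if needed) and gives $\|Mg\|_{L^1(I_0)} \asymp \|Sg\|_{L^1(I_0)}$. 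Combining with the elementary estimates $Mg \le Mf + |c|$, $Mf \le Mg + |c|$ and $|c| \le Mf$ on $I_0$, one obtains
\[
\|Mf\|_{L^1(I_0)} \asymp \|Mg\|_{L^1(I_0)} + |c|\,|I_0| \asymp \|Sf\|_{L^1(I_0)} + |c|\,|I_0| \asymp \|\wt Sf\|_{L^1(I_0)},
\]
which is the desired equivalence on $I_0$.

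On a piece $I_0 \in \Aio \setminus \Aif$ of infinite measure, the argument recalled earlier in the paper (bounded compactly supported functions together with an $\e/3$ reasoning) forces $\E_{-\infty} f|_{I_0} = 0$, so $\wt S f|_{I_0} = S f|_{I_0}$ and the extended and classical square functions coincide there. The filtration $(\mathfrak A_k\cap I_0)_{k\in\Z}$ has trivial tail $\sigma$-algebra (no finite-measure subset of $I_0$ lies eventually in a single atom of $\cL_k$), so the same two-sided martingale form of Davis's theorem applies and yields $\|Mf\|_{L^1(I_0)} \asymp \|Sf\|_{L^1(I_0)} = \|\wt S f\|_{L^1(I_0)}$. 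Summing over $I_0\in\Aio$ completes the proof.

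The main obstacle is organisational rather than substantive: one must carefully isolate the $\Delta_{-\infty}$ contribution on each finite-measure atom of $\mathfrak A_{-\infty}$ before invoking classical Davis, and separately verify that the infinite-measure atoms contribute nothing to $\Delta_{-\infty}$ so that the classical square function already equals $\wt S f$ there. No new ideas beyond the Burkholder--Davis theorem for probability-space martingales are required.
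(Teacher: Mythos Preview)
The paper does not actually prove this theorem: it is stated as Davis's classical result, with the remark that the original theorem applies directly to the situation $|\X|=1$, $\mathfrak A_k=\{\X\}$ for $k\le0$, $\E\ci\X f=0$, and that ``the general case can be easily obtained from here by a standard reasoning, which we skip here.'' Your proposal is exactly this standard reduction---decompose $\X$ along $\Aio$, on each finite-measure atom subtract the constant $\Delta_{-\infty}f$ and invoke the probability-space Davis theorem---so the approach matches what the paper has in mind, and your treatment of the finite-measure atoms is correct.

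There is one genuine gap. For $I_0\in\Aio$ of infinite measure you write that ``the same two-sided martingale form of Davis's theorem applies,'' but Davis's inequality is a probability-space statement and cannot be invoked directly on an infinite-measure interval. The fix is an exhaustion argument: write $I_0=\bigcup_{k} I_k$ with $I_k\in\cL_k$, $I_k\subset I_{k-1}$, $|I_k|\to\infty$ as $k\to-\infty$; on each finite-measure $I_k$ apply the case you have already handled (with subtracted constant $c_k=\La f\Ra\ci{I_k}$), and then let $k\to-\infty$. The restricted maximal and square functions increase monotonically to $Mf$ and $Sf$ on $I_0$, and the contribution $|c_k|\,|I_k|$ is controlled once you know $f\in L^1(I_0)$ (which follows from $Mf\in L^1$ for one direction, and on the other direction from the $L^1$-convergence of the martingale difference series on each $I_k$). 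With this exhaustion step made explicit, your argument is complete.
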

\begin{rem*}
The theorem in \cite{Davis-IntegrSqFunct_1970} was proved for general discrete time martingales, and in our case it can be directly applied in to the situation $|\X|=1$, $\mathfrak A_{k} = \{\X\}$ for $k\le 0$, $\E\ci\X f=0$. However, the general case can be easily obtained from here by a standard reasoning, which we skip here.
\end{rem*}

Note, that by the Lebesgue differentiation theorem $\|f\|_1\le \|Mf\|_1$. Therefore, if $\|Sf_n\|_1\to 0$, then $\|f_n\|_1\to 0$, so if $Sf\in L^1$, then the martingale difference decomposition  \eqref{MartDiffDecomp} converges unconditionally in $L^1$.

\begin{df*}
The martingale extended Hardy space $\wt H^1$ is  set of all functions $f\in L^1$  such that $\wt Sf\in L^1$ (equivalently, $M f \in L^1$), equipped with the norm $\| f\|\ci{\wt H^1} =  \|\wt Sf\|_1$. 

The Hardy space $H^1$ consists of all the functions in $\wt H^1$ such that $\E\ci I f =0$ for all $I\in\Aif$ (with the norm given by $\|Sf\|_1$). Note, that $\|Mf\|_1$ also gives an equivalent norm on $H^1$. 
\end{df*}

\begin{rem*}
For $p\in(1, \infty)$ the extended martingale Hardy space $\wt H^p$ is also defined as  the space of all locally integrable functions $f$ such that  $\wt Sf\in L^p$, with the norm $\|f\|_{\wt H^p} =\|Sf\|_p$. While, as we discussed above, for $p\in(1, \infty)$ the space $\wt H^p$ is isomorphic to  $L^p$, we will use the notation $\wt H^p$ as well (for example, to emphasize that we are using a different norm). 

Finally, the spaces $H^p$ are defined as subspaces of $\wt H^p$ consisting of functions $f$ such that $\E\ci I f =0$ for all $I\in\Aif$. 
\end{rem*}

\subsection{Martingale transforms and martingale multipliers}Let $D\ci I :=  \Delta\ci I L^2$. A \emph{martingale transform} is a linear transformation $T$
\[
T\Bigl( \sum_{I\in\cL} \Delta\ci I f \Bigr) = \sum_{I\in\cL} T\ci I (\Delta\ci I f)
\]
where $T\ci I :D\ci I \to D\ci I$. We also assume that $T\E\ci I f =0$ for all $I\in \Aif$. 

Such operators are well defined for finite sums: for now we will not assume the boundedness of $T$.

If all operators $T\ci I$ are multiples of identity, the corresponding martingale transform is called a \emph{martingale multiplier}.

\subsection{Paraproducts}
For a function $b$ let us consider the multiplication operator $M_b$, $M_b f =bf$. We do not assume here that $M_b$ is bounded in $L^2$ (i.e.~that $b\in L^\infty$. For our purposes, it is enough to assume that $b\in L^1_{\text{loc}}$, so $\La M_b f, g\Ra$ is well defined for $f$ and $g$ with finite martingale decompositions, i.e.~for finite sums  
\begin{equation}
\label{1.3}
    f = \sum_{I\in\cL} \Delta\ci I f + \sum_{I\in \Aif} \E\ci I f, \qquad g= \sum_{I\in\cL} \Delta\ci I g +\sum_{I\in \Aif} \E\ci I g.
\end{equation}

\subsubsection{The ``infinite measure'' case} Let us first consider the situation when $\Aif=\varnothing$. 

In this case, as it was discussed above, the space $L^2$ is decomposed in the orthogonal sum of subspaces $D\ci I$, $I\in \cL$.

Consider the decomposition of the operator $M_b$ in this orthogonal basis
\[
M_b f =\sum_{I\in \cL}\sum_{J\in \cL} \Delta\ci I M_b \Delta\ci J f.
\]
This sum can be split into 3 parts: over $I\subsetneqq J$, $J\subsetneqq I$ and $I=J$ respectively.

The first sum is called the \emph{paraproduct} and is denoted as $\pi_b f$; the corresponding operator $\pi_b$ is also called the paraproduct. Since for $I\subsetneqq J$
\[
\Delta\ci I (b\Delta\ci J f) = (\Delta\ci I b)( \Delta\ci J f),
\]
we can write
\begin{equation}
\label{1.4}
\pi_b f = \sum_{I, J\in\cL: I\subsetneqq J}  \Delta\ci I (b\Delta\ci J f) =  \sum_{I, J\in\cL: I\subsetneqq J} (\Delta\ci I b)( \Delta\ci J f) = \sum_{I\in \cL} (\Delta\ci I b)( \E\ci I f);
\end{equation}
the last equality follows from the fact that for fixed $I\in \cL$,
\[
\sum_{J\in\cL: I\subsetneqq J} ( \Delta\ci J f)\,\1\ci I =\E\ci I f.
\]

The second sum (over $J\subsetneqq I$) is  $\pi^*_b f$, where $\pi^*_b$ is the dual of $\pi_b$ with respect to the standard linear duality $\La f, g\Ra = \int fg$. This can be easily seen from the fact that $\La \E\ci I f, g\Ra = \La f, \E\ci I g\Ra$ and so $\La \Delta\ci I f, g\Ra = \La f, \Delta\ci I g\Ra$.

The third sum (over $I=J$) is the ``diagonal'' term denoted as $\Lambda_b f$. It is easy to see that
\[
\Lambda_b f =\sum_{I\in\cL} \Delta\ci I \left( b \Delta\ci I f\right)
\]
This diagonal term commutes with all martingale multipliers, so it can be ignored when one studies commutators of $M_b$ with martingale multipliers.  

In the situation when all intervals $I\in\cL$ have at most $2$ children, any martingale transform is a multiplier, so in this case it is enough to consider decomposition of $M_b$ as
\begin{equation}
\label{1.4a}
M_b = \pi_b + \pi_b^* + \Lambda_b;
\end{equation}
where we can ignore the term $\Lambda_b$ when studying commutators with martingale transforms.

In a general situation, we can only ignore a term that is a martingale multiplier, so a different decomposition is needed. To present this decomposition we need the following lemma, which gives us a formula for $\pi_b^*$.

\begin{lm}
\label{l.pi^*}
The (formal) dual $\pi_b^*$ of $\pi_b$ with respect to the standard linear duality is given by
\begin{align*}
\pi^*_b f & = \sum_{I\in\cL} \E\ci I \left( (\Delta\ci I b)(\Delta\ci I f) \right) \\
& =
 \sum_{I\in\cL}\E\ci I \left( (b - \E\ci I b)(\Delta\ci I f) \right)
 =
 \sum_{I\in\cL} \E\ci I \left( b (\Delta\ci I f) \right)
.
\end{align*}
The word ``formal'' here means that the equality $\La \pi_b f , g \Ra = \La  f ,\pi_b^* g \Ra$ holds for all finite sums $f=\sum_{I\in \cL} \Delta\ci I f$, $g=\sum_{I\in \cL} \Delta\ci I g$.
\end{lm}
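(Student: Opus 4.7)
The plan is to compute the bilinear form $\langle \pi_b f, g\rangle$ directly using the closed formula for $\pi_b$, and then recognize the result as a bilinear form $\langle f, T g\rangle$ for the three candidate operators $T$.

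Starting from the expression \eqref{1.4}, namely $\pi_b f = \sum_{I\in\cL}\langle f\rangle\ci I \Delta\ci I b$, I first use that $\Delta\ci I$ is a self-adjoint orthogonal projection in $L^2$ (with respect to the standard bilinear duality), so that
\[
\langle \Delta\ci I b, g\rangle = \langle \Delta\ci I b, \Delta\ci I g\rangle.
\]
This reduces the computation to
\[
\langle \pi_b f, g\rangle = \sum_{I\in\cL} \langle f\rangle\ci I \,\langle \Delta\ci I b, \Delta\ci I g\rangle,
\]
where all sums are finite by assumption. Next, using that $\E\ci I$ is self-adjoint and that $\E\ci I f = \langle f\rangle\ci I \1\ci I$, combined with the fact that $(\Delta\ci I b)(\Delta\ci I g)$ is supported in $I$, I rewrite each term as
\[
\langle f\rangle\ci I \,\langle \Delta\ci I b, \Delta\ci I g\rangle = \big\langle f,\, \E\ci I\!\left((\Delta\ci I b)(\Delta\ci I g)\right) \big\rangle,
\]
and summing over $I$ identifies $\pi_b^* g$ with the first displayed formula.

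The remaining two expressions for $\pi_b^* g$ are equivalent reformulations of the first. For the equality
\[
\E\ci I\!\left((\Delta\ci I b)(\Delta\ci I g)\right) = \E\ci I\!\left((b-\E\ci I b)(\Delta\ci I g)\right),
\]
I write $(b-\E\ci I b)\1\ci I = \Delta\ci I b + \sum_{J\in\cL,\,J\subsetneqq I}\Delta\ci J b$ (which is the standard martingale decomposition of $b$ restricted to $I$ with its mean subtracted), and observe that for each $J\subsetneqq I$, the function $\Delta\ci I g$ is constant on the child of $I$ containing $J$, so $\int_J (\Delta\ci J b)(\Delta\ci I g) = (\Delta\ci I g)|_J \int_J \Delta\ci J b = 0$ by the mean-zero property of $\Delta\ci J b$. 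This leaves only the $\Delta\ci I b$ contribution. For the third equality, the fact that $\Delta\ci I f$ has mean zero on $I$ gives $\int_I c\,\Delta\ci I f=0$ for any constant $c$, so one can harmlessly add back $\E\ci I b$ to $b-\E\ci I b$ inside the bracket.

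I expect no significant obstacle: the entire argument is bookkeeping about the support of $\Delta\ci I$ and $\E\ci I$ and the self-adjointness of the martingale projections; the only subtle point is the second equality, where one has to be careful that $\Delta\ci J b$ for $J\subsetneqq I$ really does integrate against $\Delta\ci I g$ to zero — which holds because $\Delta\ci I g$ is constant on every proper subinterval of $I$ from the lattice, and in particular on every $J\subsetneqq I$ containing the support of $\Delta\ci J b$ within one of its level sets.
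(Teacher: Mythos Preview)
Your proof is correct and follows essentially the same duality computation as the paper's, using self-adjointness of $\E\ci I$ and $\Delta\ci I$; you simply arrive at the three equivalent forms in a different order (the paper reaches $\E\ci I(b\,\Delta\ci I g)$ first and leaves the other two identities as an exercise, while you reach $\E\ci I((\Delta\ci I b)(\Delta\ci I g))$ first). One minor remark: your argument for the middle equality via the full martingale expansion of $(b-\E\ci I b)\1\ci I$ is more than needed and tacitly uses $L^1$-convergence of that series --- since $\Delta\ci I g$ is constant on each child $J\in\chld(I)$, one has directly $\int_I(b-\E\ci I b)(\Delta\ci I g)=\sum_{J\in\chld(I)}(\Delta\ci I g)\bigm|_J(\langle b\rangle\ci J-\langle b\rangle\ci I)|J|=\int_I(\Delta\ci I b)(\Delta\ci I g)$, which avoids any infinite sum.
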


\begin{proof}
It is easy to see that $\La \E\ci I f, g \Ra = \La  f,\E\ci I g \Ra$, and therefore so $\La \Delta\ci I f, g \Ra = \La  f,\Delta\ci I g \Ra$. Using these identities and the fact that $(\Delta\ci I b)(\E\ci I f) = \Delta\ci I (b\E\ci I f)$, we get
\begin{align*}
\La \pi_b f , g \Ra & = \sum_{I\in\cL} \left\La \Delta\ci I (b \,\E\ci I f), g\right\Ra \\
& =
\sum_{I\in\cL} \left\La b\, \E\ci I f,  \Delta\ci I g\right\Ra \\
& =
\sum_{I\in\cL} \left\La  \E\ci I f, b\, \Delta\ci I g\right\Ra \\
& =
\sum_{I\in\cL} \left\La  f,   \E\ci I(b\,  \Delta\ci I g)\right\Ra
\end{align*}
To complete the proof it remains to show that
\[
\E\ci I(b\,  \Delta\ci I g) =
\E\ci I \left( (b - \E\ci I b)(\Delta\ci I g) \right)
=
\E\ci I \left( (\Delta\ci I b)(\Delta\ci I g) \right) ,  
\]
which we leave as an exercise for the reader.
\end{proof}

To give an alternative (to \eqref{1.4a}) decomposition of $M_b$ let us notice that
\begin{align*}
\Delta\ci I \left( b \Delta\ci I f\right) & = \Delta\ci I \left( ( b - \E\ci I b) \Delta\ci I f\right) + (\E\ci I b)\Delta\ci I f
\\
& = \Delta\ci I \left( (\Delta\ci I b) \Delta\ci I f\right)  + (\E\ci I b)\Delta\ci I f .
\end{align*}
Therefore, we can decompose $\Lambda_b = \Lambda_b^1 + \Lambda_b^0$, where
\begin{align}
\label{Lam_b^1}
\Lambda_b^1 f & = \sum_{I\in\cL}  \Delta\ci I \bigl[ (\Delta\ci I b)(\Delta\ci I f) \bigr], \\
\label{Lam_b^0}
\Lambda_b^0 f & = \sum_{I\in\cL} (\E\ci I b) (\Delta\ci I f).
\end{align}
Note, that $\Lambda_b^0$ is a martingale multiplier, so it commutes with all martingale transforms. 

Defining 
\begin{equation}
\label{pi(*)}
\pi^{(*)}_b := \pi_b^* + \Lambda_b^1,
\end{equation}
we can  decompose the multiplication operator $M_b$, $M_b f := bf$  as
\begin{equation}
\label{1.5}
M_b = \pi_b + \pi^{(*)}_b + \Lambda_b^0,
\end{equation}

\begin{lm}
\label{l.pi(*)}
\begin{equation}
\label{pi(*)-1}
\pi_b^{(*)} f = \sum_{I\in\cL} (\Delta\ci I b)(\Delta\ci I f). 
\end{equation}
\end{lm}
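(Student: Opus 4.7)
The plan is to combine the formula for $\pi_b^*$ from Lemma \ref{l.pi^*} with the definition \eqref{Lam_b^1} of $\Lambda_b^1$, and then simplify using the identity $\Delta\ci I + \E\ci I = \sum_{J\in\chld(I)}\E\ci J$.

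First I would write, using \eqref{pi(*)}, Lemma \ref{l.pi^*} and \eqref{Lam_b^1},
\[
\pi_b^{(*)} f = \sum_{I\in\cL} \E\ci I\bigl[(\Delta\ci I b)(\Delta\ci I f)\bigr] + \sum_{I\in\cL} \Delta\ci I\bigl[(\Delta\ci I b)(\Delta\ci I f)\bigr] = \sum_{I\in\cL} \bigl(\E\ci I + \Delta\ci I\bigr)\bigl[(\Delta\ci I b)(\Delta\ci I f)\bigr].
\]
Since $\Delta\ci I = \bigl(\sum_{J\in\chld(I)} \E\ci J\bigr) - \E\ci I$ by definition, we have $\E\ci I + \Delta\ci I = \sum_{J\in\chld(I)} \E\ci J$, so the above reduces to $\sum_{I\in\cL}\sum_{J\in\chld(I)} \E\ci J\bigl[(\Delta\ci I b)(\Delta\ci I f)\bigr]$.

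The key observation is then that $(\Delta\ci I b)(\Delta\ci I f)$ is supported on $I$ and \emph{constant on each child} $J\in\chld(I)$: indeed, both $\Delta\ci I b$ and $\Delta\ci I f$ restricted to a child $J$ equal the constants $\La b\Ra\ci J - \La b\Ra\ci I$ and $\La f\Ra\ci J - \La f\Ra\ci I$, respectively. Hence for any such function $h$,
\[
\sum_{J\in\chld(I)} \E\ci J h = \sum_{J\in\chld(I)} \La h\Ra\ci J \1\ci J = h,
\]
because the children $J\in\chld(I)$ partition $I$. Applying this with $h = (\Delta\ci I b)(\Delta\ci I f)$ yields the desired identity \eqref{pi(*)-1}. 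There is no real obstacle here beyond bookkeeping; the only point that requires a moment of attention is the fact that $(\Delta\ci I b)(\Delta\ci I f)$ already lies in the span of $\{\1\ci J:J\in\chld(I)\}$, so that the operator $\sum_{J\in\chld(I)}\E\ci J$ acts on it as the identity.
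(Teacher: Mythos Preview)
Your proof is correct and is essentially the same as the paper's: both rest on the identity $(\Delta\ci I b)(\Delta\ci I f) = \E\ci I\bigl[(\Delta\ci I b)(\Delta\ci I f)\bigr] + \Delta\ci I\bigl[(\Delta\ci I b)(\Delta\ci I f)\bigr]$, which the paper states directly and you obtain via the equivalent observation that $\E\ci I + \Delta\ci I = \sum_{J\in\chld(I)}\E\ci J$ acts as the identity on functions constant on the children of $I$.
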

\begin{proof}
Notice that 
\[
(\Delta\ci I b)(\Delta\ci I f) = \E\ci I \left(  (\Delta\ci I b)(\Delta\ci I f) \right) +
\Delta\ci I \left(  (\Delta\ci I b)(\Delta\ci I f) \right)  .
\]
Taking the sum over all $I\in\cL$ we get in the right side $\pi^*_b f + \Lambda_b^1 f$, which proves the lemma. 
\end{proof}

There is an alternative, probably a more natural way to get the decomposition \eqref{1.5}. Namely, let us consider  
the product $bf$, which can be written as
\[
\sum_{I, J\in \cL} (\Delta\ci I b)( \Delta\ci J f)
\]
(let us not worry about convergence here and assume that the sums in the martingale difference decompositions of $f$ and $b$ are finite).

Let us split the above sum into 3 parts, over the sets $I\subsetneqq J$, $J\subsetneqq I$ and $I=J$ respectively.

The first sum gives us the \emph{paraproduct}  $\pi_b f$
\begin{equation}
\label{para-alt}
 \sum_{I, J\in\cL: I\subsetneqq J} (\Delta\ci I b)( \Delta\ci J f) = \sum_{I\in \cL} (\Delta\ci I b)( \E\ci I f) =: \pi_b f ,
\end{equation}
cf.~\eqref{1.4}.

The second sum (over $J\subsetneqq I$) can be written as $\pi_f b$, so using \eqref{1.4} with $f$ and $b$ interchanged and recalling the definition of $\Lambda_b^0$, see \eqref{Lam_b^0}, we get
\[
 \sum_{I,J\in\cL:J\subsetneqq I} (\Delta\ci I b)( \Delta\ci J f) = \sum_{J\in\cL} (b\ci J)( \Delta\ci J f) =: {\Lambda}_b^0 f.
\]

Finally,  the last sum gives us
\[
 \sum_{I\in\cL} (\Delta\ci I b)(\Delta\ci I f) =: \pi^{(*)}_b f,
\]
see \eqref{pi(*)-1}.


\begin{rem*}
Note, that if $\cL$ is the standard dyadic lattice, then $\E\ci I \left( (\Delta\ci I b)(\Delta\ci I f) \right)  =
(\Delta\ci I b)(\Delta\ci I f))$, so $\pi^{(*)}_b = \pi^*_b$. This fact was used, for example, in \cite{Blasco-Pott_2005}.  
\end{rem*}

\subsubsection{Paraproducts in general case}
Let us now consider the general case, when $\Aif\ne\varnothing$. Consider the decompositions
\begin{equation*}
    f = \sum_{I\in\cL} \Delta\ci I f + \sum_{I\in \Aif} \E\ci I f, \qquad g= \sum_{I\in\cL} \Delta\ci I g +\sum_{I\in \Aif} \E\ci I g, 
\end{equation*}
and let us  decompose $\La bf, g\Ra$. Note that for a fixed $I\in\cL$
\[
\Bigl\La b \Biggl( \sum_{J\in\cL: J\supsetneqq I} \Delta\ci J f + \sum_{J\in \Aif:J\supset I} \E\ci I f \Biggr) , \Delta\ci I g \Bigr\Ra = 
\La (\Delta\ci I b) \E\ci I f, \Delta\ci I g\Ra = \La \pi_b f, \Delta\ci I g\Ra, 
\]
where, as above
\begin{equation}
\label{para1}
\pi_b f := \sum_{I\in\cL} (\E\ci I f)(\Delta\ci I b). 
\end{equation}
Similarly, 
\[
\Bigl\La b \Delta\ci I f, \Biggl( \sum_{J\in\cL: J\supsetneqq I} \Delta\ci g f + \sum_{J\in \Aif:J\supset I} \E\ci I g \Biggr)  \Bigr\Ra = 
\La \Delta\ci I f , (\Delta\ci I b) \E\ci I g \Ra = \La \Delta\ci I f, \pi_b   g\Ra . 
\]
As we discussed above
\[
\sum_{I\in\cL} \La b \Delta \ci I f, \Delta\ci I g \Ra = \La \Lambda_b f, g \Ra, 
\]
where
\begin{equation}
\label{La_b}
\Lambda_b f := \Delta\ci I (b \Delta\ci I f). 
\end{equation}
The only terms in $\La f, g\Ra$ that we did not count yet, are the terms with $I, J\in\Aif$, which give us the remainder
\[
\sum_{I\in \Aif} \La b \E\ci I f , \E\ci I g \Ra = 
\Bigl\La \sum_{I\in \Aif} (\E\ci I b) \E\ci I f , \E\ci I g \Bigr\Ra =: \La R_b f, g\Ra. 
\]

So, the multiplication operator $M_b$ can be decomposed as 
\[
M_b = \pi_b^* + \Lambda_b + \pi_b + R_b, 
\]
where the paraproduct $\pi_b$ is defined by \eqref{para1}, $\pi_b^*$ is its adjoint, $\Lambda_b$ is defined by \eqref{La_b}, and 
\begin{equation}
\label{R_b}
R_b f = (\E_{-\infty} b) (\E_{-\infty} f) = \sum_{I\in\Aif} (\E\ci I b)(E\ci I f). 
\end{equation}

Note, that Lemma \ref{l.pi^*} remains true in the general case as well: the  proof is exactly  the same. Also, nothing changes in the decomposition $\Lambda_b = \Lambda_b^0 + \Lambda_b^1$, because we can investigate this decomposition separately in each block $D\ci I$, and these blocks know nothing about $\Aif$. Finally, the proof of Lemma \ref{l.pi(*)} works in the general case without any changes. 

Summarizing we can state the following proposition.

\begin{prop}
\label{p1.2}
The multiplication operator $M_b$ is represented (at least formally) as
\[
M_b  -R_b  = \pi_b^{(*)} + \Lambda_b^0 + \pi_b = \pi_b^{*} + \Lambda_b + \pi_b = \pi_b^{*} + \Lambda_b^0 + \Lambda_b^1 + \pi_b
\]
where
\begin{align*}
\Lambda_b f & = \sum_{I\in\cL} \Delta\ci I (b \Delta\ci I f ) , \qquad \Lambda_b = \Lambda_b^0 + \Lambda_b^1, \\
\Lambda_b^0 f & = \sum_{I\in\cL} (\E\ci I b) (\Delta\ci I f), \\
\Lambda_b^1 f & = \sum_{I\in\cL}  \Delta\ci I \bigl[ (\Delta\ci I b)(\Delta\ci I f) \bigr]  .
\end{align*}

\end{prop}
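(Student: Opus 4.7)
The plan is to verify the three equivalent decompositions by expanding the bilinear form $\langle M_b f, g\rangle$ using the full martingale decomposition \eqref{MartDiffDecomp} on both arguments and grouping the resulting double sum according to the relationship between the indices. For finite sums $f,g$ of the form \eqref{1.3}, write
\[
\langle bf,g\rangle = \sum_{I,J\in\cL}\langle b\,\Delta\ci J f,\Delta\ci I g\rangle + \text{(cross terms with $\Aif$)} + \sum_{I,J\in\Aif}\langle b\,\E\ci I f,\E\ci J g\rangle.
\]
The sum over $\cL\times\cL$ splits into the three cases $I\subsetneqq J$, $J\subsetneqq I$, and $I=J$, which give $\langle\pi_b f,g\rangle$, $\langle\pi_b^*f,g\rangle$ (using Lemma \ref{l.pi^*}), and $\langle\Lambda_b f,g\rangle$, respectively, precisely as in the ``infinite measure'' discussion leading to \eqref{1.4}.

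Next I would verify the $\Aif$ contributions. The double sum over $I,J\in\Aif$ collapses to the diagonal because distinct intervals in $\Aif$ are disjoint, yielding $\langle R_b f,g\rangle$ with $R_b$ as in \eqref{R_b}. For the cross terms, fix $I\in\cL$ and $J\in\Aif$; since $I\subset J$ (any $J\in\Aif$ contains all intervals of $\cL$ that meet it), the term $\langle b\,\E\ci J f,\Delta\ci I g\rangle = \langle (\Delta\ci I b)\E\ci J f,\Delta\ci I g\rangle$ is absorbed into $\langle\pi_b f,g\rangle$ by the same computation that gave \eqref{para1}, and symmetrically for the dual cross terms absorbed into $\pi_b^*$. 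After collecting everything one obtains $M_b - R_b = \pi_b^* + \Lambda_b + \pi_b$, which is the middle form in the proposition.

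To pass to the first and third forms, decompose $\Lambda_b$ blockwise on each $D\ci I$. Writing $b = \E\ci I b + \Delta\ci I b + (b - \E_{\chld(I)}b)$ and observing that the last piece contributes nothing to $\Delta\ci I(b\Delta\ci I f)$ (since it is orthogonal to $D\ci I$ after projection by $\Delta\ci I$), one recovers
\[
\Delta\ci I(b\,\Delta\ci I f) = (\E\ci I b)\Delta\ci I f + \Delta\ci I\bigl[(\Delta\ci I b)(\Delta\ci I f)\bigr],
\]
which yields the splitting $\Lambda_b = \Lambda_b^0 + \Lambda_b^1$ with the stated formulas for $\Lambda_b^0$ and $\Lambda_b^1$. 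The first form $M_b - R_b = \pi_b^{(*)} + \Lambda_b^0 + \pi_b$ is then immediate from the definition $\pi_b^{(*)} = \pi_b^* + \Lambda_b^1$ in \eqref{pi(*)}, while the third form is just $\pi_b^* + \Lambda_b^0 + \Lambda_b^1 + \pi_b$ written without the $\pi_b^{(*)}$ abbreviation.

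The only subtlety, and the main point to be careful about, is the handling of the $\Aif$-terms in the general (possibly infinite-measure) case: verifying that the cross terms pairing an $\Aif$-average with a martingale difference get correctly absorbed into the $\pi_b$ or $\pi_b^*$ sums, and that the $\Aif\times\Aif$ block collapses to the diagonal producing $R_b$. Everything else is a direct assembly of the identities established earlier (Lemmas \ref{l.pi^*} and \ref{l.pi(*)}), with no convergence issues since the identities are asserted only ``formally,'' i.e.~for functions with finite martingale expansions.
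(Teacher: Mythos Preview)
Your proposal is correct and follows essentially the same route as the paper: the proposition is stated as a summary of the preceding discussion, which decomposes $\langle bf,g\rangle$ via the martingale expansions of $f$ and $g$, splits the $\cL\times\cL$ sum into the three cases $I\subsetneqq J$, $J\subsetneqq I$, $I=J$, absorbs the $\cL\times\Aif$ cross terms into $\pi_b$ and $\pi_b^*$ exactly as you describe, identifies the $\Aif\times\Aif$ diagonal as $R_b$, and then splits $\Lambda_b=\Lambda_b^0+\Lambda_b^1$ blockwise. Your handling of the $\Lambda_b$ splitting via $b=\E\ci I b+\Delta\ci I b+(b-\E_{\chld(I)}b)$ is a minor rephrasing of the paper's $b=\E\ci I b+(b-\E\ci I b)$ followed by the observation $\Delta\ci I[(b-\E\ci I b)\Delta\ci I f]=\Delta\ci I[(\Delta\ci I b)\Delta\ci I f]$, but the content is identical.
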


\section{
Triebel--Lizorkin type spaces.}

This part is devoted to the investigation of the ``coefficient space'' of the spaces $H^p$. We are mostly interested in the spaces with $q=2$, but since since the result for $q\ne 2$ are often obtained with little or no extra effort, we consider the case of general $q$ here.

The notation  $\gb_p^q$ is chosen by the analogy with the notation $\fb_p^{\alpha, q}$ for Triebel--Lizorkin spaces, see for example \cite{Frazier-Jawerth-1990}. We use a different scaling here, so to avoid the confusion we use the different notation.  Also, we do not use smoothness parameter $\alpha$ (we do not need it in what follows, and frankly, it is not completely clear what should be the correct smoothness in the general non-homogeneous case). For the standard dyadic lattices in $\R^d$ our spaces $\gb_p^q$ are isomorphic to $\fb_p^{0, q}$, with isomorphism given by rescaling of the entries.

\subsection{Triebel--Lizorkin type spaces \texorpdfstring{$\gb_p^q(\cL)$}{}}

Let $\cL$ be a lattice. For $1\le p, q< \infty$ define the sequence spaces $\gb_p^q(\cL)$ consisting of sequences $s=\{s\ci I\}_{I\in\cL}$ such that
\[
\| s\|_{\gb_p^q(\cL)} := \biggl\| \biggl( \sum_{I\in\cL} |s\ci I |^q  \1\ci I \biggr)^{1/q} \biggr\|_{L^p}.
\]
For $p=\infty$ the norm is defined using BMO-like norm
\[
\|s\|_{\gb^{q}_\infty(\cL)} := \sup_{J\in\cL} \left( \frac{1}{|J|} \int_J \sum_{I\in \cL, \, I\subset J}
 |s\ci Q|^q \1_Q  \right)^{1/q}
\]
Formally, one can define the whole scale of spaces $\gb^{q,(r)}_{\infty}(\cL)$, $1\le r<\infty$,
\[
\|s\|_{\gb^{q,(r)}_{\infty}(\cL)} := \sup_{J\in\cL} \left( \frac{1}{|J|} \int_J \Bigl(\sum_{I\in \cL, \, I\subset J}
 |s\ci Q|^q \1_Q \Bigr)^{r/q} \right)^{1/r},
\]
but it will be shown later that the norms are equivalent for $1\le r <\infty$.

To shorten the notation, we will omit $\cL$ and use the notation $\gb_p^q$ instead of $\gb_p^q(\cL)$, when  it is clear from
the context what the lattice $\cL$ is.

The spaces $\gb_p^q(\cL)$ can be naturally identified with the subspaces of $L^p(\ell^q)$ ($L^p$ with values in $\ell^q$). Namely, for a sequence $s= \{s\ci I\}_{I\in\cL}$ define functions
\[
f_k = \sum_{I\in \cL: \rk(I) = k} s\ci I \1\ci I,  \qquad k\in \Z,
\]
and let
\[
f(x, k) = f_k(x), \qquad k\in\Z, \quad x\in \R.
\]
Then clearly, for $1\le p, q<\infty$
\[
\| s\|_{\gb_p^q(\cL)} = \|f\|_{L^p(\ell^q)} := \left(\int \| f(x, \fdot)\|_{\ell^q}^p dx\right)^{1/p}.
\]

Thus, the space $\gb_p^q(\cL)$, $1\le p, q<\infty$ can be naturally identified with the subspace of $L^p(\ell^q)$ consisting of functions $f$ such that $f(\fdot, k)$ is constant on intervals $I\in\cL$, $\rk(I) =k$, and such that $f(x, k) = 0$ if  there is no interval $I\in\cL$, $\rk(I)=k$ containing $x$ (recall that $\rk(I)$ is the largest integer $k$ such that $I\in \cL_k$, so the condition $I\in \cL_k$ does not mean that $\rk(I)=k$).

We will routinely switch between the function and sequence representation of elements of $\gb_P^q$, so $f\in \gb_p^q$ as a sequence $\{f\ci I \}\ci{I\in\cL}$ or as the corresponding function $f(\fdot, \fdot) \in L^p(\ell^q)$.

We will also need the notion of the \emph{coordinate projection}  of $f\in \gb_p^q$. Namely, for $\cE\subset \cL$ define the coordinate projection $f\ci \cE$ by
\begin{equation}
\label{coord.proj}
f\ci \cE = \{ f\ci I \}\ci{I \in \cE}   
\end{equation}
(meaning that entries corresponding to $I\notin \cE$ are $0$). In the function representation this can be written as
\begin{equation}
\label{coord.proj.1}
f\ci \cE(\fdot, k) = f(\fdot, k)\cdot \Bigl( \sum_{I\in\cL:\rk(I) =k} \1\ci I\Bigr), \qquad k\in\Z.
\end{equation}

For $f\in L^p(\ell^q)$ define the vector Hardy--Littlewood maximal function $f^*$
\[
f^*(x, k) = \sup_{I\ni x} \, \frac1{|I|}\int_I |f(s, k)|ds.
\]

We will need the following well-known theorem
\begin{thm}[Fefferman--Stein, \cite{Fef-Stein_Max_Inequalities_1971}]
\label{tFS}
Let $f\in L^p(\ell^q)$, $1<p, q<\infty$. Then
\[
\|f^*\|_{L^p(\ell^q)} \le C\|f \|_{L^p(\ell^q)},
\]
where $C$ depends only on $p$ and $q$.
\end{thm}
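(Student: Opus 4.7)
The plan is to prove the inequality in three stages: the diagonal case $p=q$ by Fubini, a weak-type $(1,1)$ bound via a Calder\'on--Zygmund decomposition adapted to the $\ell^q$-norm, and then Marcinkiewicz interpolation together with a duality argument (invoking the scalar Fefferman--Stein weighted maximal inequality) to cover all $1<p,q<\infty$. The diagonal case is immediate: applying the scalar dyadic maximal inequality in each coordinate and swapping sum with integral gives
\[
\|f^*\|_{L^p(\ell^p)}^p = \sum_k \|f^*(\cdot,k)\|_p^p \le C_p^p \sum_k \|f(\cdot,k)\|_p^p = C_p^p \|f\|_{L^p(\ell^p)}^p.
\]

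The core technical step is the weak $(1,1)$ estimate for $Tf := \|f^*(\cdot,\cdot)\|_{\ell^q}$. Replacing $f$ by $|f|$ coordinate-wise alters neither $f^*$ nor the $\ell^q$-norm, so assume $f\ge 0$. Fix $\la>0$ and let $F(x) := \|f(x,\cdot)\|_{\ell^q}\in L^1$. I would apply the dyadic CZ decomposition at level $\la$ to the scalar $F$, selecting the maximal dyadic $\{I_j\}$ with $\La F\Ra_{I_j}>\la$; their union $E$ satisfies $|E|\le \la^{-1}\|F\|_1$. Split $f=g+b$ by setting $g(y,k)=f(y,k)$ off $E$ and $g(y,k)=\La f(\cdot,k)\Ra_{I_j}$ on $I_j$, so $\int_{I_j} b(\cdot,k)\,dy=0$ for every $j,k$. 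Minkowski's integral inequality combined with doubling of the Euclidean dyadic lattice gives $\|g\|_{L^\infty(\ell^q)}\le C\la$, while $\|g\|_{L^1(\ell^q)}\le \|f\|_{L^1(\ell^q)}$, so $\|g\|_{L^q(\ell^q)}^q\le (C\la)^{q-1}\|f\|_{L^1(\ell^q)}$. The crucial observation is that for $x\notin E$ and every dyadic $I\ni x$, nesting forces either $I_j\subset I$ or $I\cap I_j=\varnothing$; combined with the mean-zero property of $b$ and $f\ge 0$, this yields $\La f(\cdot,k)\Ra_I=\La g(\cdot,k)\Ra_I$ for all such $I$, hence $f^*(x,\cdot)=g^*(x,\cdot)$ pointwise on $E^c$. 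Then Chebyshev and the $(q,q)$ bound for $g$ give $|\{Tf>\la\}|\le|E|+|\{Tg>\la\}|\le C\la^{-1}\|f\|_{L^1(\ell^q)}$.

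For $1<p\le q$, Marcinkiewicz interpolation between weak $(1,1)$ and strong $(q,q)$ produces $T\colon L^p(\ell^q)\to L^p$. For $q<p<\infty$ I would proceed by scalar $L^{p/q}$ duality: writing $\|Tf\|_p^q=\|\sum_k(Mf_k)^q\|_{p/q}=\sup_{h\ge0,\,\|h\|_{(p/q)'}\le 1}\int\sum_k(Mf_k)^q h\,dx$, applying the scalar Fefferman--Stein weighted maximal inequality $\int(Mf_k)^q h\le C_q\int|f_k|^q Mh$ term by term, and then H\"older:
\[
\sum_k\int(Mf_k)^q h\,dx \le C_q\int\Bigl(\sum_k|f_k|^q\Bigr) Mh\,dx \le C_q \bigl\|\textstyle\sum_k|f_k|^q\bigr\|_{p/q}\|Mh\|_{(p/q)'}\le C\|f\|_{L^p(\ell^q)}^q,
\]
using the scalar $L^{(p/q)'}$-boundedness of $M$ (valid since $(p/q)'>1$).

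The principal obstacle is the identity $f^*=g^*$ on $E^c$ in the weak $(1,1)$ step: in an $\ell^q$-valued maximal function, the supremum over intervals $I\ni x$ may be attained at \emph{different} intervals in different coordinates $k$, so a naive pointwise Minkowski-type bound fails. The argument works only because dyadic nesting, together with the reduction to $f\ge 0$, lets the mean-zero property of the bad part transfer to \emph{every} dyadic interval containing a point outside $E$, eliminating the contribution of $b$ there and reducing the problem back to $g$, which is controlled in $L^\infty(\ell^q)\cap L^1(\ell^q)$.
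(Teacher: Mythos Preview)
The paper does not prove this theorem at all: it is stated as a classical result with a citation to \cite{Fef-Stein_Max_Inequalities_1971} and then used as a black box. So there is nothing in the paper to compare your argument against.

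Your sketch is essentially the standard Fefferman--Stein proof and is correct in outline. A few remarks. First, the maximal function $f^*$ in the paper is the genuine Hardy--Littlewood maximal function on $\R$ (supremum over \emph{all} intervals), not the lattice maximal function $M_\cL$; your argument is written for the dyadic maximal function, but this is harmless since the full maximal function is pointwise dominated by a constant times the dyadic one (or a finite sum of shifted dyadic maximal functions), so the dyadic case suffices. Second, in the $q<p$ step you invoke the scalar weighted inequality $\int (Mf_k)^q\,h \le C_q \int |f_k|^q\,Mh$; this is itself a Fefferman--Stein result from the same paper, so strictly speaking you are importing the main tool rather than proving everything from scratch, but this is exactly how the argument is usually organized and is perfectly acceptable. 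Third, your observation that $f^*(x,\cdot)=g^*(x,\cdot)$ for $x\notin E$ is the right way to handle the weak $(1,1)$ bound and avoids the trap you correctly flag (different coordinates selecting different intervals); it works precisely because of dyadic nesting and the reduction to $f\ge 0$.
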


The following fact is well known.

\begin{prop}
For $1<p,q<\infty$ the dual space $(\gb_p^q(\cL))^*$ is isomorphic to $\gb_{p'}^{q'}(\cL)$, where $1/p+1/p'=1$, $1/q+1/q'=1$, and the pairing is the standard one
\begin{equation}
\label{n2.1}
\La f, g\Ra = \int_\X \sum_k f(x,k) g(x, k) dx = \int_\X \sum_{I\in\cL} f\ci I g\ci I \1\ci I = \sum_{I\in\cL} f\ci I g\ci I |I|.
\end{equation}
Note that only claim that the norm in $\gb_{p'}^{q'}(\cL)$ is equivalent to the norm in the dual space (except the trivial case $p=q=2$ when the norms  coincide).
\end{prop}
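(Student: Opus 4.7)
The idea is to identify $\gb_p^q(\cL)$ with a closed subspace of $L^p(\ell^q)$, use the classical duality $(L^p(\ell^q))^* = L^{p'}(\ell^{q'})$, and then construct a bounded projection from $L^{p'}(\ell^{q'})$ onto $\gb_{p'}^{q'}(\cL)$ whose image under the pairing \eqref{n2.1} coincides with the extension of the original functional.

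The easy direction: for any $g\in \gb_{p'}^{q'}(\cL)$, Hölder's inequality in $L^p(\ell^q)$ gives
\[
|\La f, g\Ra| \le \|f\|_{L^p(\ell^q)} \|g\|_{L^{p'}(\ell^{q'})} = \|f\|_{\gb_p^q} \|g\|_{\gb_{p'}^{q'}},
\]
so $g$ defines a bounded functional on $\gb_p^q$ with norm at most $\|g\|_{\gb_{p'}^{q'}}$.

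For the converse, given $\Phi\in (\gb_p^q)^*$, first extend $\Phi$ by Hahn--Banach to a functional $\wt\Phi$ on $L^p(\ell^q)$ of the same norm; by the duality $(L^p(\ell^q))^* = L^{p'}(\ell^{q'})$ it is represented by some $\wt g \in L^{p'}(\ell^{q'})$. The issue is that $\wt g$ need not sit in $\gb_{p'}^{q'}$. To fix this, define a ``coordinate-wise averaging'' map $P$ on $L^{p'}(\ell^{q'})$ by
\[
(P h)(x,k) := \sum_{\substack{I\in\cL \\ \rk(I)=k}} \La h(\fdot, k) \Ra\ci I \1\ci I(x),
\]
i.e.~for each fixed $k$ we average $h(\fdot,k)$ over the disjoint intervals $I$ with $\rk(I)=k$ and set it to zero elsewhere. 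By construction $Ph$ lies in $\gb_{p'}^{q'}$ (realized as a function on $\X\times\Z$). The key estimate is the pointwise bound $|(Ph)(x,k)| \le h^*(x,k)$, where $h^*$ is the vector Hardy--Littlewood maximal function; by the Fefferman--Stein Theorem \ref{tFS} this yields
\[
\|Ph\|_{L^{p'}(\ell^{q'})} \le C \|h\|_{L^{p'}(\ell^{q'})}, \qquad 1<p',q'<\infty.
\]
Setting $g := P\wt g$, we get $\|g\|_{\gb_{p'}^{q'}} \lesssim \|\wt\Phi\| = \|\Phi\|$.

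It remains to verify that $\La f, g\Ra = \Phi(f)$ for every $f\in \gb_p^q$. Writing $f$ in its function representation, $f(\fdot,k)$ is constant on each $I$ with $\rk(I)=k$ and vanishes elsewhere; therefore, for each $k$,
\[
\int_\X f(x,k)\wt g(x,k)\,dx = \sum_{\substack{I\in\cL\\\rk(I)=k}} f\ci I \int_I \wt g(x,k)\,dx = \int_\X f(x,k)(P\wt g)(x,k)\,dx,
\]
summing in $k$ gives $\La f, \wt g \Ra = \La f, g \Ra$. Combining with the easy direction yields the equivalence of norms. The main (and essentially only) real obstacle is the boundedness of the projection $P$, for which the Fefferman--Stein vector-valued maximal inequality is exactly the right tool; everything else is bookkeeping.
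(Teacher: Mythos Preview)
Your proof is correct and follows essentially the same approach as the paper: Hahn--Banach extension to $L^p(\ell^q)$, representation by $\wt g\in L^{p'}(\ell^{q'})$, then projection onto $\gb_{p'}^{q'}$ via coordinate-wise averaging, with boundedness of the projection coming from the pointwise domination $|P\wt g|\le(\wt g)^*$ and Fefferman--Stein. You additionally spell out the verification that the pairing is unchanged under the projection, which the paper leaves implicit.
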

For the sake of completeness we present the proof of this proposition.
\begin{proof}
Since $(L^p(\ell^q))^* = L^{p'}(\ell^{q'})$ (for $1<p, q<\infty$), any $g \in \gb_{p'}^{q'}(\cL)$ define a bounded linear functional $L$ on $\gb_p^q(\cL)$, and $\|L\|\le \|g\|_{\gb_{p'}^{q'}(\cL)}$.

On the other hand, if $L$ is a bounded linear functional on $\gb_p^q(\cL)$, it can be extended by Hahn--Banach Theorem to a bounded linear functional on $L^p(\ell^q)$, which can be represented by a function $\wt g\in L^{p'}(\ell^{q'})$,
\[
L(f) = \int \sum_k f(x, k) \wt g(x, k) dx, \qquad \forall f\in \gb_p^q(\cL)
\]
Note that functional $L$ will not change if we replace the function $\wt g$ by its ``orthogonal'' projection $g$ onto $\gb_{p'}^{q'}(\cL)$,
\[
g(x, k) =
\left\{ \begin{array}{ll}
|I|^{-1} \int_I \wt g (s, k) ds, \qquad &\text{if } \rk(I)=k, \text{ and } x\in I \\
0 &\text{if } \not \exists I\ni x, \ \rk(I)=k .
\end{array}\right.
\]
Clearly $|g|\le (\wt g)^*$, so by the Fefferman--Stein maximal theorem (Theorem \ref{tFS})
\[
\|g\|_{L^{p'}(\ell^{q'})} \le C \| \wt g\|_{L^{p'}(\ell^{q'})}.
\]
\end{proof}

Dual of $\gb_1^q$ is given by the following theorem.

\begin{thm}
\label{t2.3}
Let $1<q<\infty$. Then the spaces $\gb_\infty^{q, (r)}$, $1\le r<\infty$ do not depend on $r$, and the corresponding norms are equivalent. Moreover, the dual space $(\gb_1^q)^*$ is isomorphic to $\gb_\infty^{q'}$; here again $1/q+1/q'=1$ and the pairing is given by \eqref{n2.1}.  
\end{thm}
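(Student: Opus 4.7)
The plan is to dispatch the two claims separately.

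For the $r$-independence, write $H_J(y):=\sum_{I\subset J}|s_I|^q\1_I(y)$ so that $\|s\|_{\gb^{q,(r)}_\infty}^r=\sup_J|J|^{-1}\int_J H_J^{r/q}$, with the original norm being the $r=q$ case. One direction is immediate: $(|J|^{-1}\int_J H_J^{r/q})^{1/r}=\|H_J\|_{L^{r/q}(|J|^{-1}dy)}^{1/q}$ is nondecreasing in $r$ by monotonicity of $L^p$-norms on probability measures. The reverse comparison is a John--Nirenberg inequality for $H_J$ based on the self-similar identity $H_J(x)=H_{J'}(x)+c_{J',J}$ for $x\in J'\subsetneq J$, where $c_{J',J}:=\sum_{I:J'\subsetneq I\subset J}|s_I|^q$ is constant on $J'$. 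Set $N^q:=\|s\|_{\gb^q_\infty}^q$, fix $\lambda>1$, and let $\mathcal S_1$ be the family of maximal $J'\subsetneq J$ with $|J'|^{-1}\int_{J'}H_J>\lambda N^q$; Chebyshev gives $|\bigcup\mathcal S_1|\le|J|/\lambda$, and the identity forces $c_{J',J}>(\lambda-1)N^q$ on each $J'\in\mathcal S_1$. Iterating the stopping inside each $J'$ produces nested families $\mathcal S_n$ of total measure $\le|J|/\lambda^n$ on which $H_J>n(\lambda-1)N^q$, while outside $\bigcup\mathcal S_1$ Lebesgue differentiation gives $H_J\le\lambda N^q$ a.e. This yields the exponential bound $|\{x\in J:H_J(x)>\mu N^q\}|\le Ce^{-c\mu}|J|$, and integrating the distribution function produces $\|s\|_{\gb^{q,(r)}_\infty}\le C_r\|s\|_{\gb^q_\infty}$ for every finite $r$. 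Crucially, the argument uses no doubling of $\cL$.

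The easy inclusion $(\gb^q_1)^*\hookrightarrow\gb^{q'}_\infty$ is the $p=1$ analogue of the earlier Hahn--Banach plus averaging argument. Since $\gb^q_1$ is a closed subspace of $L^1(\ell^q)$, every $L\in(\gb^q_1)^*$ extends to a functional on $L^1(\ell^q)$ represented by some $\wt g\in L^\infty(\ell^{q'})$ with $\|\wt g\|_\infty=\|L\|$. Replacing $\wt g$ by its averages $g\ci I:=|I|^{-1}\int_I\wt g(y,\rk(I))\,dy$ does not change the action on $\gb^q_1$, and Jensen followed by summation over $I\subset J$ yields
\[
|J|^{-1}\sum_{I\subset J}|g\ci I|^{q'}|I|\le|J|^{-1}\int_J\|\wt g(y,\cdot)\|_{\ell^{q'}}^{q'}\,dy\le\|\wt g\|_\infty^{q'},
\]
the key point being that for $y\in J$ and each rank $m\ge\rk(J)$ there is at most one $I\subset J$ of rank $m$ containing $y$. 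Hence $\|g\|_{\gb^{q'}_\infty}\le\|L\|$.

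For the converse $\gb^{q'}_\infty\hookrightarrow(\gb^q_1)^*$ I would pass through an atomic decomposition. Call $a$ a \emph{$\gb^q_1$-atom} if it is supported on $\{I\in\cL:I\subset J\}$ for a single $J\in\cL$ and satisfies $\|a\|_{\gb^q_q}\le|J|^{-1/q'}$; the pairing with $g\in\gb^{q'}_\infty$ then localizes to $I\subset J$, and H\"older gives
\[
|\La a,g\Ra|\le\|a\|_{\gb^q_q}\Bigl(\sum_{I\subset J}|g\ci I|^{q'}|I|\Bigr)^{1/q'}\le|J|^{-1/q'}\cdot|J|^{1/q'}\|g\|_{\gb^{q'}_\infty}=\|g\|_{\gb^{q'}_\infty},
\]
the middle step being the very definition of the $\gb^{q'}_\infty$-norm. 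Given any atomic decomposition $f=\sum_n\lambda_n a_n$ with $\sum_n|\lambda_n|\le C\|f\|_{\gb^q_1}$, summation yields $|\La f,g\Ra|\le C\|f\|_{\gb^q_1}\|g\|_{\gb^{q'}_\infty}$.

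The main obstacle is producing the atomic decomposition itself. My plan is a Calder\'on--Zygmund construction on the level sets of $F(x):=\|f(x,\cdot)\|_{\ell^q}$: set $\Omega_k:=\{M_\cL F>2^k\}$, let $\{J_k^j\}_j$ be its maximal lattice intervals, assign each $I\in\cL$ to the unique largest $k$ with $I\subset\Omega_k$ (call this $k(I)$), and form $a^{k,j}$ from the entries $\{f\ci I:k(I)=k,\,I\subset J_k^j\}$; a good-point inside $I\setminus\Omega_{k+1}$ supplies the pointwise control $|f\ci I|\le 2^{k+1}$. The delicate issue, absent in the dyadic or homogeneous case, is the accounting $\sum_{k,j}|\lambda_{k,j}|\lesssim\|F\|_1$: the naive bound goes through $\sum_k 2^k|\Omega_k|\lesssim\|M_\cL F\|_1$, which need not be controlled by $\|F\|_1$ when $\cL$ is non-doubling. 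Resolving this requires partitioning the level sets $\{F>2^k\}$ by lattice intervals in a Whitney-like manner rather than using the raw $\Omega_k$, together with careful bookkeeping of the ``transition'' intervals $I$ that properly contain some maximal interval of $\Omega_{k+1}$; these are the places where the non-homogeneous geometry of $\cL$ first bites, and they are the technical heart of the proof.
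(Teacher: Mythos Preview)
Your John--Nirenberg argument for the $r$-equivalence is correct and genuinely different from the paper's route: the paper never proves John--Nirenberg directly but instead obtains the equivalence as a corollary of the duality, by showing $\gb_\infty^{q',(1)}\hookrightarrow(\gb_1^q)^*$ and then $(\gb_1^q)^*\hookrightarrow\gb_\infty^{q',(r)}$ for every $r$. Your Jensen argument for the inclusion $(\gb_1^q)^*\hookrightarrow\gb_\infty^{q'}$ is also correct and more elementary than the paper's, which invokes the Fefferman--Stein maximal theorem to control the averaging projection in $L^r(\ell^{q'})$.

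The gap is in the converse, $\gb_\infty^{q'}\hookrightarrow(\gb_1^q)^*$. You correctly diagnose that the maximal-function level sets $\Omega_k=\{M_\cL F>2^k\}$ fail because $\sum_k 2^k|\Omega_k|$ is only controlled by $\|M_\cL F\|_1$, and you correctly propose the fix---replace $\Omega_k$ by $E_k:=\{F>2^k\}$---but then you stop. The sentence ``careful bookkeeping of the transition intervals \ldots\ they are the technical heart of the proof'' is an acknowledgment that the argument is unfinished, not a proof. (There is also a slip in your $\Omega_k$-based sketch: a good point $y\in I\setminus\Omega_{k+1}$ only gives $|I|^{-1}\int_I F\le 2^{k+1}$, not $|f\ci I|\le 2^{k+1}$; you need a point where $F$ itself, not its maximal function, is small.)

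The paper's proof is exactly the execution you are missing. With $E_k=\{F>2^k\}$ and $\cE_k=\{I\in\cL:I\subset E_k\}$, the key estimate is the uniform bound
\[
\|f\ci{\cE_k\setminus\cE_{k+1}}(x,\cdot)\|_{\ell^q}\le 2^{k+1}\qquad\text{for all }x.
\]
For $x\notin E_{k+1}$ this is trivial. For $x\in E_{k+1}$, take the maximal $J\in\cE_{k+1}$ containing $x$; its parent $\wt J\notin\cE_{k+1}$, so there exists $y\in\wt J$ with $F(y)\le 2^{k+1}$. But the intervals $I\in\cE_k\setminus\cE_{k+1}$ containing $x$ are precisely those with $I\supsetneq J$, and each such $I$ also contains $y$, so the $\ell^q$-sum at $x$ is bounded by the same sum at $y$, which is at most $F(y)\le 2^{k+1}$. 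With this in hand, the pairing estimate is a two-line computation:
\[
|\La f,g\Ra|\le\sum_k\|f\ci{\cE_k\setminus\cE_{k+1}}\|_{L^\infty(\ell^q)}\|g\ci{\cE_k}\|_{L^1(\ell^{q'})}\le\sum_k 2^{k+1}|E_k|\cdot\|g\|_{\gb_\infty^{q',(1)}}\le 4\|f\|_{\gb_1^q}\|g\|_{\gb_\infty^{q',(1)}}.
\]
This is the ``transition interval'' argument you allude to; once written down it is short, and it requires no Whitney decomposition beyond the trivial fact that $E_k$ is a union of maximal lattice intervals.
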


\begin{proof}
Let $g\in \gb_\infty^{q', (1)}$. We want to show that
\[
|\La f, g\Ra | \le C \|f\|_{\gb_1^q} \|g\|_{\gb_\infty^{q', (1)}}.
\]

It is sufficient to prove this inequality on a dense set of functions $f$ for which the corresponding sequence $\{s\ci I\}\ci{I\in\cL}$ has finitely many non-zero terms.

Let $E_k:=\{ x\in\R: \| f(x, \fdot) \|_{\ell_q}> 2^k\}$, and let $\cE_k := \{ I\in\cL: I \subset E_k\}$. Note than $E_k$ is a  finite disjoint union of maximal intervals $I\in \cE_k$, maximal meaning that there is no interval in $\cE_k$ for which $I$ is a proper subinterval.  

One can easily see (see Fig.~\ref{fig1} )that
\[
\sum_{k\in\Z} 2^k |E_k|\le 2 \int \| f(x, \fdot) \|_{\ell_q} dx.
\]
Since $\cL$ is a disjoint union of the sets $\cE_k\setminus\cE_{k+1}$ we write
\[
f = \sum f\ci{\cE_k\setminus \cE_{k+1}}
\]
where the functions $f\ci{\cE_k\setminus \cE_{k+1}}$ are defined in the sequence representation $f=\{f\ci I\}\ci{I\in\cL}$  by
\[
f\ci{\cE_k\setminus \cE_{k+1}} = \{f\ci I\}\ci{I\in \cE_k\setminus \cE_{k+1}}.
\]

\setlength{\unitlength}{1mm}
\begin{figure}

\begin{center}
\includegraphics{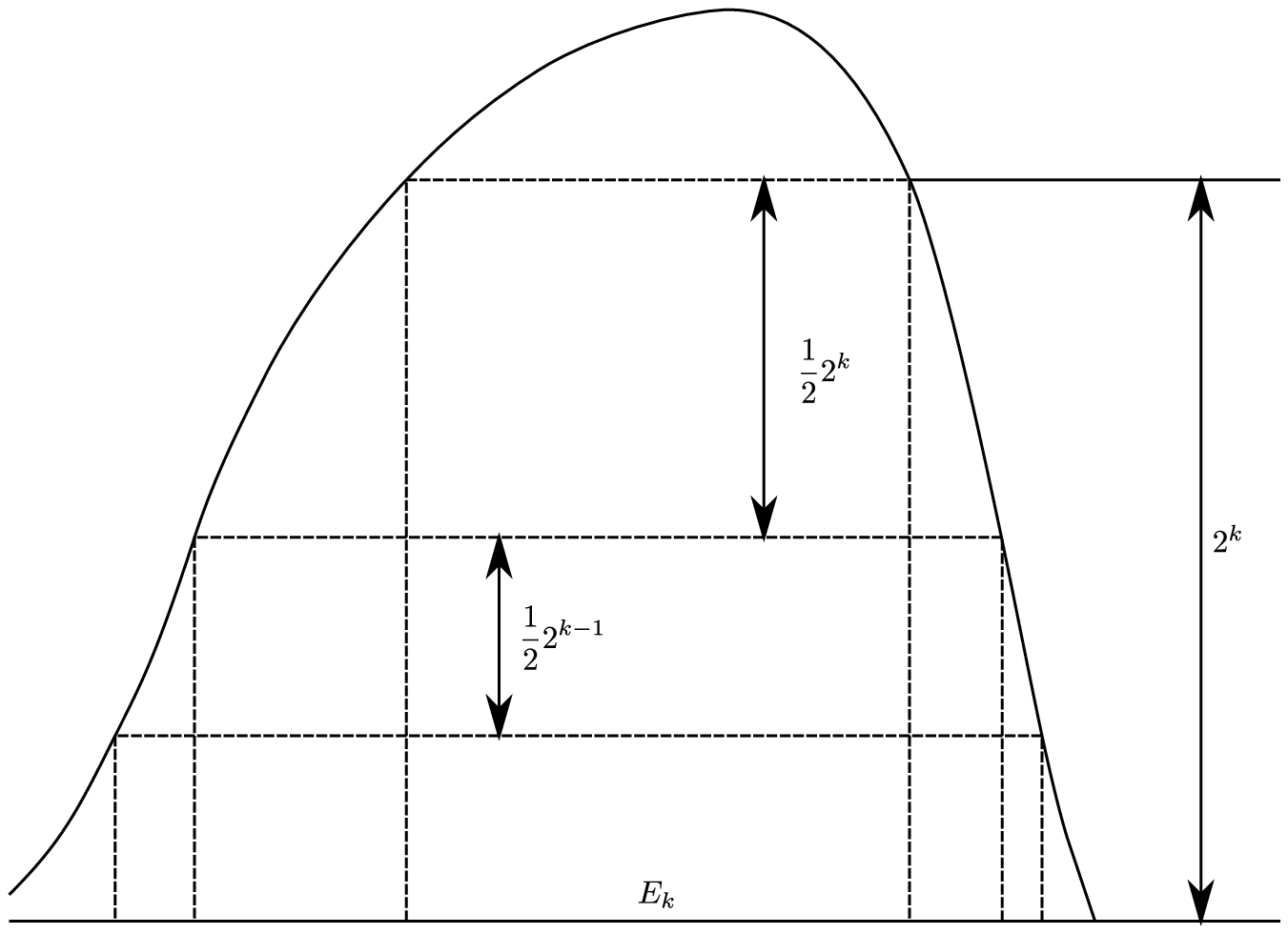}
\end{center}
\caption{Level sets}%
{\protect\label{fig1}}%
\end{figure}

Note that
\begin{equation}
\label{2.2}
\| f\ci{\cE_k\setminus \cE_{k+1}}(x, \fdot)\|_{\ell^q} \le 2^{k+1} \1\ci{\cE_k}(x) .
\end{equation}
Indeed, the estimate for $x\notin E_{k+1}$ is trivial. For $x\in E_{k+1}$ let $J$ be the maximal interval in $\cE_{k+1}$ containing $x$.  Then for this $x$
\begin{equation}
\label{2.3}
\| f\ci{\cE_{k}\setminus\cE_{k+1}}(x, \fdot) \|_{\ell^q}^q = \sum_{I\in\cE_k: I\supsetneqq J} |f\ci I|^q.
\end{equation}
Let $\wt J$ be the parent of $J$. We assume that$\wt J\in\cE_k$, because otherwise the sum is trivial and $f\ci{\cE_k \setminus \cE_{k+1}} = 0$ on $I$. Then for any $y\in\wt J$
\begin{equation}
\label{2.4}
\sum_{I\in\cE_k: I\supsetneqq J} |f\ci I|^q \le \| f\ci{\cE_{k}\setminus\cE_{k+1}}(y, \fdot) \|_{\ell^q}^q
\end{equation}
(the right side contain all the terms from the left side, plus probably some other terms).

But since $\wt J\notin \cE_{k+1}$, there exists $y\in \wt J$ such that $y\notin E_{k+1}$, and so
\[
\| f\ci{\cE_{k}\setminus\cE_{k+1}}(y, \fdot) \|_{\ell^q}^q \le 2^{k+1}.
\]
Together with \eqref{2.4} and \eqref{2.3} this inequality implies \eqref{2.2}.  


To finish the proof let us take $g\in \gb_\infty^{q',(1)}$, $\|g\|\ci{\gb_\infty^{q',(1)}} \le 1$ and estimate
\begin{align*}
|\La f, g\Ra| = \Bigl| \int\sum_{I\in\cL} f\ci I g\ci I \1\ci I dx \Bigr|
& \le \sum_{k\in\Z}
 \int \sum_{I\in\cE_k\setminus \cE_{k+1}}  |f\ci I| |g\ci I| \1\ci I dx \\
& \le \sum_{k\in\Z} \|f\ci{\cE_k\setminus\cE_{k+1}}\|\ci{L^\infty(\ell^q)}  
\|g\ci{\cE_k\setminus\cE_{k+1}}\|\ci{L^1(\ell^{q'})}  \\
& \le
\sum_{k\in\Z} 2^{k+1} |E_k| \le 4 \|f\|_{L^1(\ell^q)}.
\end{align*}
Here in the third line we used the fact that by \eqref{2.2} we have $\|f\ci{\cE_k\setminus\cE_{k+1}} \|\ci{L^\infty(\ell^q)}  \le 2^{k+1}$   and that $\| g\ci{\cE_k \setminus\cE_{k+1}} \|\ci{L^1(\ell^{q'})} \le \| g\ci{\cE_k } \|\ci{L^1(\ell^{q'})} \le |E_k|$.  

So, we have proved that a function $g\in \gb_\infty^{q',(1)}$ defines a bounded linear functional of $\gb_1^q$.

Let now $\f $ be a bounded linear functional on $\gb_1^q$. By Hahn--Banach Theorem it can be extended to a functional on $L^1(\ell^q)$, so it can be represented as
\[
\f(f) = \La f, \wt g\Ra = \int_\R \sum_{k\in\Z} f(x,k) \wt g(x, k) dx
\]
where $\wt g\in L^\infty(\ell^{q'})$, $\|\wt g\|\ci{L^\infty(\ell^{q'})} = \|\f\|$.

Let $g = \cP\ti{seq}(\wt g)$ be the projection of $\wt g$ onto the space of sequences, i.e.~let the function $g$ is given in sequence representation
by
\[
g\ci I = \fint_I \wt f(x, \rk(I)) dx .
\]
The projection $\cP\ti{seq}$, as it can be easily shown, is not bounded in  $L^\infty(\ell^{q'})$, but by Fefferman--Stein theorem (Theorem \ref{tFS}) it is bounded in $L^r(\ell^{q'})$.

Therefore, for any (finite union of intervals) $E\subset \R$ and the collection $\cE:=\{I\in\cL: I\subset E\}$
\[
\|g\ci \cE \|_{L^r(\ell^{q'})}^r \le C \|\wt g\ci \cE \|_{L^r(\ell^{q'})}^r \le C |E| \|\wt g\ci \cE \|_{L^\infty(\ell^{q'})}^r
\]
which means exactly that $g \in \gb^{q',(r)}_\infty$, $\| g \|_{\gb^{q',(r)}_\infty} \le C \|\f\|$.
\end{proof}

\subsection{Embedding theorem for \texorpdfstring{$\gb_p^q$}{g<sub>p<sup>q}}
Let $\{\alpha\ci I\}\ci{I\in\cL}$ be a collection of numbers. We are interested when the operator
\[
f\mapsto \{\alpha\ci I \La f \Ra\ci I \}\ci{I\in\cL}
\]
is a bounded operator from $L^p$ to $\gb_p^q$; recall that for a function $f$ the symbol $\La f \Ra\ci I$ denotes it average,  $\La f \Ra\ci I = \fint_I f$.

The answer to this question is well known if $p=q$, is is given by the famous Carleson Embedding Theorem, that says that a necessary and sufficient condition to the boundedness is
\[
\sup_{I\in\cL} \frac1{|I|} \sum_{J\in \cL: J\subset I} |\alpha\ci J |^q <\infty
\]
which means exactly that $\{\alpha\ci I\}\ci{I\in \cL} \in \gb_\infty^{q,(q)} = \gb_\infty^q$. This result is especially well known for $p=q=2$; the situation for $p=q$ can be obtained, as it will be shown below, by the standard comparison with maximal function.

Below, we will show that the answer is the same for all $p\in (1, \infty)$. For $p=1$ the above condition is not sufficient for the embedding, but if we replace $L^1$ by $\wt H^1$, then the result can be extended to $p=1$. 

\begin{thm}
\label{t2.4}
Let $p\in[1, \infty)$, $q\in(1, \infty)$ and let $\alpha = \{\alpha\ci I\}\ci{I\in\cL}$ be a collection of numbers. Then the operator $A_\alpha$
\[
A_\alpha f = \{\alpha\ci I \La f \Ra\ci I \}\ci{I\in\cL}
\]
is a bounded operator $\wt H^p \to \gb_p^q$ if and only if $\alpha \in \gb_\infty^q$.
\end{thm}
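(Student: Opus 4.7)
The plan is to prove necessity by testing on $\1_J$, and sufficiency by combining the classical $p=q$ Carleson embedding with real interpolation plus a weak-$(1,1)$ / atomic endpoint.

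\textbf{Necessity.} Fix $J\in\cL$ and test on $f=\1_J$. For $p>1$ one has $\|\1_J\|_{\wt H^p}=|J|^{1/p}$, and for $p=1$ the bound $M\1_J\le\1_J$ combined with Theorem~\ref{t.Sq-Max} gives $\|\1_J\|_{\wt H^1}\lesssim |J|$. Since $\langle\1_J\rangle_I=1$ for every $I\in\cL,\,I\subset J$, the entries of $A_\alpha\1_J$ on such $I$ are exactly $\alpha_I$; restricting the $\gb_p^q$-norm to those intervals yields
\begin{equation*}
C^p|J|\;\ge\;\|A_\alpha\1_J\|_{\gb_p^q}^{\,p}\;\ge\;\int_J\Bigl(\sum_{I\in\cL,\,I\subset J}|\alpha_I|^q\1_I(x)\Bigr)^{p/q}dx.
\end{equation*}
Dividing by $|J|$ and taking $\sup_J$ produces $\|\alpha\|_{\gb_\infty^{q,(p)}}\le C$; by the equivalence in Theorem~\ref{t2.3}, $\alpha\in\gb_\infty^q$.

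\textbf{Sufficiency.} The core is the case $p=q$: viewing $d\mu(I):=|\alpha_I|^q|I|$ as a measure on $\cL$, the hypothesis $\alpha\in\gb_\infty^q$ amounts to $\mu\{I:I\subset J\}\le C|J|$, so the layer-cake formula combined with the containment of $\{I:|\langle f\rangle_I|>\lambda\}$ in the maximal intervals of $\{Mf>\lambda\}$ and the maximal theorem give $\|A_\alpha f\|_{\gb_q^q}^{\,q}\le C\|f\|_q^q$. To extend, I would study the sublinear operator $Sf(x):=\bigl(\sum_I|\alpha_I\langle f\rangle_I|^q\1_I(x)\bigr)^{1/q}$. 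A Calder\'on--Zygmund decomposition $f=g+\sum_J b_J$ at level $\lambda$ gives a weak $(1,1)$ bound for $S$: the crucial point is that for any $I$ meeting $(\bigcup J)^c$, either $I\cap J=\emptyset$ for all $J$ in the decomposition or each such $J$ is properly contained in $I$, so in both cases $\langle b_J\rangle_I=0$ by cancellation; hence $Sb\equiv 0$ off $\bigcup J$, while $Sg$ is handled by the strong $(q,q)$ bound. Marcinkiewicz then yields strong $(p,p)$ for $1<p\le q$. For $p>q$, the trivial bound $\|A_\alpha f\|_{\gb_\infty^q}\le\|f\|_\infty\|\alpha\|_{\gb_\infty^q}$ serves as the upper endpoint; real interpolation with the $(q,q)$-bound, performed in the $\gb_\infty^{q,(r)}$-scale (Theorem~\ref{t2.3}) and using the Fefferman--Stein bound (Theorem~\ref{tFS}) to pass between $\gb_p^q$ and $L^p(\ell^q)$, covers $q<p<\infty$. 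Finally, the case $p=1$ is handled by atomic decomposition of $\wt H^1$: for each atom $a$ supported on $I_a\in\cL$ with $\int a=0$ and $\|a\|_\infty\le|I_a|^{-1}$, cancellation kills $(A_\alpha a)_I$ on $I\supsetneq I_a$, while on $I\subset I_a$ the Carleson hypothesis $\sum_{I\subset I_a}|\alpha_I|^q|I|\le C|I_a|$ gives $\|A_\alpha a\|_{\gb_1^q}\le C$ uniformly, which sums back to $\|A_\alpha f\|_{\gb_1^q}\lesssim\|f\|_{\wt H^1}$.

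\textbf{Main obstacle.} The most delicate steps are the Calder\'on--Zygmund and atomic decompositions in the non-homogeneous setting: the lattice has neither doubling nor a fixed refinement ratio, and intervals in $\Aif$ require modifying the cancellation condition $\int a=0$ for atoms attached to them. The interpolation for sequence-valued targets with a BMO-like endpoint is standard machinery, but it must be applied in the full scale $\gb_\infty^{q,(r)}$ rather than at a single exponent, which is precisely what Theorem~\ref{t2.3} provides.
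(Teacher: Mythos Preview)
Your necessity argument matches the paper's. For sufficiency your route is genuinely different. The paper handles all $p\le q$ (including $p=1$) in one stroke via the level-set decomposition $E_k=\{M_\cL f>2^k\}$: it splits $A_\alpha f$ over the layers $\cE_k\setminus\cE_{k+1}$, bounds each layer by $2^{(k+1)p}|E_k|$ using the Carleson hypothesis, and sums via $\sum_k 2^{kp}|E_k|\le C\|M_\cL f\|_p^p\le C\|f\|_{\wt H^p}^p$. Your Calder\'on--Zygmund/Marcinkiewicz argument and the separate atomic argument for $p=1$ are valid alternatives, but they import external machinery (martingale $H^1$ atoms) where the paper's argument is self-contained and uniform in $p$. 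For $p>q$ the contrast is sharper: the paper does \emph{not} interpolate. Instead it uses a Rubio de Francia iteration (Lemma~\ref{l2.5}) to replace $f\ge0$ by an $A_1$ majorant, writes $\|F\|_p=\sup_{\|g\|_r\le1}\|Fg\|_q$ with $1/p+1/r=1/q$, puts $g^q$ into $A_1$ as well, and reduces to the already-proved case $p=q$ via the pointwise comparison $\fint_I|\La f\Ra_I g|^q\le C\La fg\Ra_I^q$ that the $A_1$ conditions afford. Your $L^\infty\to\gb_\infty^q$ endpoint idea can be made to work, but Theorem~\ref{t2.3} alone does not deliver the interpolation: you still need to show that the scalar function $Sf$ lands in martingale $\BMO_\cL$ for $f\in L^\infty$ (split the sum over $I\subset J$ versus $I\supsetneq J$ on each $J\in\cL$; the latter is constant on $J$) and then invoke a John--Nirenberg/sharp-function interpolation between $L^q$ and martingale $\BMO$. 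That is standard, but it is real work not supplied by the references you cite, so your $p>q$ step is underdeveloped as written.
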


\begin{proof}
To prove the necessity of the condition $\alpha \in \gb_\infty^q$ we just have to test the embedding operator on the functions $\1\ci I$, $I\in \cL$. Since $\La 1\ci I \Ra\ci J =1$ for $J\subset I$,  the boundedness of the operator $A_\alpha$ implies
\[
\left\| \sum_{J\in\cL:J\subset I} \alpha\ci J \1\ci J \right\|^p_p \le C\|\1\ci I \|_p^p = C |I|,
\]
which means $\alpha \in \gb_\infty^{q, (p)} = \gb_\infty^q$ (by Theorem \ref{t2.3} the space $\gb_\infty^{q, (p)}$ does not depend on $p$).

Let us now prove sufficiency. Let
\[
E_k=\{x\in \X: | M_\cL f (x)| > 2^k\}, \qquad \text{and}\qquad \cE_k = \{I\in \cL: I \subset E_k\}.
\]

We can write
\[
A_\alpha f = \sum_{k\in\Z} (A_\alpha f)\ci{\cE_k \setminus \cE_{k+1}},
\]
where $(A_\alpha f)\ci{\cE_k \setminus \cE_{k+1}}$ denotes the coordinate projection \eqref{coord.proj} of $A_\alpha f$ with $\cE= \cE_k\setminus\cE_{k+1}$.  

Assume that $\|\alpha\|_{\gb_p^q} \le 1$, and let $\alpha\ci{\cE_k \setminus \cE_{k+1}}$ be the corresponding coordinate projection \eqref{coord.proj} of $\alpha$.
Since $|\E\ci I f| \le |M\ci \cL f| \le 2^{k+1}$ on $I\in\cE_k \setminus \cE_{k+1}$, we conclude that
\[
\left\| (A_\alpha f)\ci{\cE_k \setminus \cE_{k+1}} \right\|_{L^p(\ell^q)}^p \le 2^{(k+1)p} \|\alpha\ci{\cE_k \setminus \cE_{k+1}} \|_{L^p(\ell^q)}^p \le
2^{k+1} |E_k|.
\]
Therefore,
\begin{equation}
\label{2.6}
\sum_{k\in\Z} \left\| (A_\alpha f)\ci{\cE_k \setminus \cE_{k+1}} \right\|_{L^p(\ell^q)}^p \le \sum 2^{(k+1)p} |E_k| \le C \|M\ci\cL f\|_p^p \le C_1 \|f\|_{\wt H^p}^p
\end{equation}
which is exactly what we need if $p=q$.

The case $p<q$ is also easy. We can write (in the functional representation)
\begin{align*}
\|A_\alpha f\|\ci{L^p(\ell^q)}^p
&= \int_\X \biggl( \sum_{k\in\Z} \left\| (A_\alpha f)\ci{\cE_k \setminus \cE_{k+1}}(x, \fdot)  \right\|_{\ell^q}^q \biggr)^{\frac1q p} dx
\\
& \le
\int_\X \biggl( \sum_{k\in\Z} \left\| (A_\alpha f)\ci{\cE_k \setminus \cE_{k+1}}(x, \fdot)  \right\|_{\ell^q}^p \biggr)^{\frac1p p} dx  &\qquad & \text{because } \|s\|\ci{\ell^q} \le \|s\|\ci{\ell^p} \ \text{for } p<q
\\ & = \sum_{k\in\Z} \left\| (A_\alpha f)\ci{\cE_k \setminus \cE_{k+1}}  \right\|_{L^p(\ell^q)}^p \le C \|f\|_{\wt H^p}^2  &&  \text{by \eqref{2.6}}
\end{align*}

The case $p> q$ is a bit more complicated.  To treat this case let us first make few simplifications. Of course, without loss of generality we can assume that $f\ge0$ and that all $\alpha\ci I \ge0$.

\begin{lm}
\label{l2.5}
Let $f\in L^p$, $1<p<\infty$, $f\ge 0$. There exists a function $\wt f$, $\wt f \ge f$, $\|\wt f\|_p \le C_p \|f\|_p$ such that
\[
M\wt f \le C'_p \wt f ,
\]
where $M$ is the Hardy--Littlewood maximal function.  
\end{lm}
The condition $Mf \le C f$ for $f\ge0$ is the so-called Muckenhoupt $(A_1)$ condition. It implies, in particular, that for any interval $I$
\[
\fint_I f  \le \frac1C \min_{x\in I} f(x)
\]
\begin{proof}[Proof of Lemma \ref{l2.5}]
Define $\wt f$
\[
\wt f := \sum_{k\ge 0} \gamma^k M^k \wt f ,
\]
where $M^k$ is $k$th iteration of $M$, and $\gamma>0$ is sufficiently small, so that
\[
\gamma \|M h \|_p \le \frac12 \|h\|_p \qquad \forall h\in L^p.
\]
\end{proof}
Replacing $f$ by $\wt f $ from Lemma \ref{l2.5} we can assume without loss of generality that $Mf \le C f$.

It is an easy exercise with H\"{o}lder inequality and Resonance Lemma (the fact that equality in H\"{o}lder inequality is attained) to see that if $p>q$ and $1/p + 1/r =1/q$, then
\[
\|F\|_p = \sup \{ \| Fg\|_q : g\in L^r, \|g\|_r \le 1\}.
\]

Take $g\in L^r$, $g\ge 0$, $\|g\|_r \le 1$. Since clearly $r>q$, applying Lemma \ref{l2.5} to $g^q$ with the exponent $s=r/q$ for $p$, we get a function $\wt g\ge g$ such that $M(\wt g^q) \le C \wt g^q$ and
\[
\|\wt g\|_r^r = \|\wt g^q \|_s^s \le C\| g^q\|_s^s  = C \|g\|_r^r.
\]
So, replacing $g$ by $\wt g$ we can assume without loss of generality that $Mg\le Cg$ and $\|g\|_r\le C$.

To complete the proof, let us first notice that
\begin{equation}
\label{2.7}
\|A_\alpha(fg) \|\ci{L^q(\ell^q)} \le C\|fg\|_q \le C \|f\|_p \|g\|_r \le C' \|f\|_p.
\end{equation}
This inequality follows from the case $p=q$ we discussed above. We used here the fact that $\|\alpha\|\ci{\gb_\infty^{q, (q)}} \le
\|\alpha\|\ci{\gb_\infty^{q, (p)}}$, which follows immediately from H\"{o}lder inequality; note that we do not need here the full equivalence of $\gb_\infty^{q,(p)}$-norms for all $p$.

So, in light of \eqref{2.7}, we only need to show that
\begin{equation*}
 \|(A_\alpha f)g \|\ci{L^q(\ell^q)} \le C \|A_\alpha(fg) \|\ci{L^q(\ell^q)}
\end{equation*}
which follows immediately if  the estimate
\begin{equation*}
 \fint_I |\La f \Ra\ci I g |^q \le C \La fg\Ra\ci I^q.
\end{equation*}
holds uniformly for all $I\in\cL$.

We know that
\begin{equation}
\label{2.8}
\min_{x\in I} f(x) \le \La f \Ra\ci I \le C \min_{x\in I} f(x)
\end{equation}
and that
\begin{equation}
\label{2.9}
\La g^q \Ra\ci I \le C \min_{x\in I} g^q(x) = C (\min_{x\in I} g(x))^q \le C \La g\Ra\ci I^q
\end{equation}
and therefore
\begin{align*}
\fint_I |\La f \Ra\ci I g |^q & =  \La f \Ra\ci I^q \La g^q \Ra\ci I  &&  \\
& \le  C \La f \Ra\ci I^q \La g \Ra\ci I^q                   &  & \text{by \eqref{2.9}}\\
&\le C   \min_{x\in I} f(x)^q  \La g \Ra\ci I^q && \text{by \eqref{2.8}}\\
& \le C  \La fg\Ra\ci I^q
\end{align*}

\end{proof}

%
%
%
%

\section{\texorpdfstring{$\cH^p_q$}{H**p<sub>q} and \texorpdfstring{$\BMOs_q$}{BMO<sub>q} spaces}

Most of result of this section are well known, and are presented there only for the convenience of the reader. However, I believe some proofs are new.


\subsection{\texorpdfstring{$\cH^p_q$}{H**p<sub>q} spaces}
\label{s.Hp}
As we had discussed in Section \ref{MartHardySp}, $L^p$ norm of a function $f$, $p\in (1, \infty)$ ($H^1$ norm if $p=1$) is equivalent to $L^p$ norm of the square function $Sf$.

Acting  by analogy, one can use ``$q$-function'' instead, and consider the space $\wt\cH^p_q$, $p,q\in [1, \infty]$ of formal martingale difference decompositions, such that
\begin{equation}
\label{Hp-norm-1}
\| f\|_{\wt\cH^p_q} := \Bigl\| \Bigl( \sum_{I\in \cL} |\Delta\ci I f  |^q + \sum_{I\in\Aif} |\E\ci I f|^q \Bigr)^{1/q} \Bigr\|_{L^p} <\infty.
\end{equation}
We assume here that the ``martingale differences'' $\Delta\ci I f$ are simply some functions $h\ci I$, constant on children of $I$ and such that $\int_\X h\ci I dx =0$. The functions $E\ci I f$ are just some multiples of $\1\ci I$.  While we do not assume that all $h\ci I $ are martingale differences  for some function $f$,
we will still use notation $\Delta\ci f$, meaning by $f$ the whole collection of such ``martingale differences''.%
\footnote{Such notation is partially justified by the fact, that in the essential case when our collection has only finitely many non-zero terms, all $\Delta\ci f$ are martingale differences of the function $\sum_{I\in\cL} \Delta\ci I f$}

We can also consider the space ${\cH^p_q}$, consisting of formal martingale difference decompositions for which $\E\ci I =0$ for all $I\in \Aif$.

The spaces  $\wt\cH^p_q$and ${\cH^p_q}$  are clearly Banach spaces as closed subspaces of $L^p(\ell^q)$.  

As we discussed above in Section \ref{MartHardySp}, if $f\in \wt\cH^p_2$, then  the series converges to a function in $L^p$ (to a function in $\wt H^1$ if $p=1$), and for $p\in (1, \infty)$ the norm $\|f\|\ci{\wt\cH^p_2}$ is equivalent to the standard $L^p$ norm.

Since $\|x\|_{\ell^2} \le \|x\|_{\ell^q}$ for $q\in [1, 2]$, any formal martingale decomposition $f\in \cH^p_q$, $q\in [1, 2]$ converges to a function in $L^p$. Thus we can identify in this case the spaces $\wt \cH^p_q$ and ${\cH^p_q}$ with function spaces, which we denote $\wt H^p_q$ and ${H^p_q}$ respectively.

For $q>2$ convergence is not clear, so in this case we only consider the spaces $\wt\cH^p_q$ and ${\cH^p_q}$ of formal martingale differences.

\begin{rem}
\label{r3.1}
Informally, we can say that $f\in {\cH^p_q}$ if $\{\Delta\ci I f\}\ci{I\in\cL} \in \gb_p^q$. We are saying ``informally'' here, because $\Delta\ci f$ are not numbers but functions, so we have to interpret the sequence $\{\Delta\ci I f\}\ci{I\in\cL}$ as the sequence of numbers.

In this paper we will interpret this by saying  that each $\Delta\ci I f$ defines entries $x\ci J $, $J\in \chld(I)$, where $x\ci J$ is simply the value of $\Delta\ci I f$ on $J$.
\end{rem}

The space $\cH^1_2$ is the classical martingale  $H^1$ space, and we often will skip index $q$ dealing with the case $q=2$. Spaces $\cH^p_q$ we studied by many authors, for example they were the spaces $H_p^{S_q}$ considered in \cite{Weisz-MartHardy-Studia-1995} ($S$ in the superscript stands for ``square function'').

\begin{rem*}
There are alternative ways of obtaining  entries  $x\ci I$ from the martingale differences $\Delta\ci I$. For example,  one  puts $x\ci I := \left(\E\ci I |\Delta\ci I f|^q\right)^{1/q}$, the condition   $\{ x\ci I\}_{I\in\cL} \in \gb_p^q$, define a martingale Hardy space what is denoted $H_p^{s_q}$ in \cite{Weisz-MartHardy-Studia-1995} (note that the supescript here is $s_q$, unlike $S_q$ in the previous paragraph.

For the lattices of homogeneous type it is not hard to show that this definition is equivalent to the first one. It is also well known and will be seen from what follows, that in the general, non-homogeneous case, the spaces can be different.
\end{rem*}

\subsection{BMO spaces and \texorpdfstring{$H^1$}{H**1}-BMO duality}
We want to define BMO  spaces, so we have the $H^1$-BMO duality, as usual.
\begin{df*}
We say that a formal martingale difference decomposition $f=\sum_{I\in\cL}\Delta\ci I f$ belongs to the space $\BMOs_q$, $q\in (1, \infty)$  
 if the sequence $\{x\ci I\}\ci{I\in\cL}$, obtained from $\{\Delta\ci I f \}\ci{I\in\cL}$ as in Remark \ref{r3.1}, belongs to $\gb_\infty^q$.

The extended BMO spaces $\BMOs_q^\sim$ are obtained by adding to the formal $f\in \BMOs_q$ additional terms
\[
\sum_{I\in \Aif} \E\ci I f, \qquad \| \E\ci I f \|_\infty \le C <\infty \quad \forall I \in \Aif.
\]

\end{df*}

We can rewrite the definition of $\BMOs_q$ by picking $r\in[1, \infty)$ (recall, than defining space $\gb_\infty^{q}$ we first defined the spaces $\gb_\infty^{q, (r)}$ and and then had shown that all $\gb_\infty^{q, (r)}$-norms are equivalent) and saying that $f\in\BMOs_q$ if for any $I\in \cL$
\begin{equation}
\label{3.1}
\fint_I \Bigl( \sum_{J\in\cL: J \subset I} |\Delta\ci{\!J} f|^q \Bigr)^{\frac1q r} dx \le C <\infty
\end{equation}
(uniformly in  $I$), and, in addition
\begin{equation}
\label{3.1.a}
\sup_{I\in\cL} \|\Delta\ci I f \|_\infty <\infty.
\end{equation}

\begin{rem*}
One would expect, that the  condition \eqref{3.1} alone defines the space $\BMOs_q$, but it was known for a long time, that the additional condition is needed. One can look, for example at the 1973 Garsia's book \cite{Garsia1973-Mart_Ineq} where the  $\BMOs_2$ space 
was defined. One can easily see that the definition from  \cite{Garsia1973-Mart_Ineq} is equivalent to the one presented here.    
\end{rem*}

If $|\X|<\infty$, $\BMOs_q\subset \cH^r_q$, so (see Section \ref{s.Hp} above) for $q\in(1, 2]$ the formal martingale difference decomposition $f\in \BMOs_q$ converges to a function in $L^r$. So in the case $|\X|<\infty$ one can identify for $q\in [1, 2]$ the spaces $\BMOs_q$ and $\BMOs_q^\sim$ with function spaces, which we will call $\BMO_q$ and $BMO_q^\sim$ respectively.

The following theorem is known,
but the proof presented here is probably new.

\begin{thm}
The dual of the space $\cH^1_q$ (resp.~${\wt \cH^1_q}$), $1<q<\infty$ is the space $\BMOs_{q'}$ (resp.~$\BMOs^\sim_{q'}$).
\end{thm}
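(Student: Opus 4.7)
My plan is to prove the duality by realizing $\cH^1_q$ (resp.\ $\wt\cH^1_q$) as a closed subspace of $L^1(\ell^q)$ through the sequence representation of Remark \ref{r3.1}, so that the argument runs parallel to the proof of the sequence-space duality $(\gb_1^q)^* \cong \gb_\infty^{q'}$ established in Theorem \ref{t2.3}. The mean-zero constraint inherent in a martingale-difference decomposition is precisely what makes the projection step in the Hahn--Banach argument clean.

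First I would identify a formal decomposition $f = \sum_I \Delta\ci I f$ with the sequence $\{f\ci J\}_{J\in\cL}$ defined by $f\ci J := (\Delta\ci I f)(J)$ for $J \in \chld(I)$, and in the extended case append the entries $\E\ci I f$ for $I \in \Aif$ at the corresponding slots of $L^1(\ell^q)$. Reindexing $\sum_{I \ni x} |\Delta\ci I f(x)|^q = \sum_{J \ni x} |f\ci J|^q$ (using the bijection between intervals $I \ni x$ and their children $J\ni x$) gives the isometry $\|f\|\ci{\cH^1_q} = \|\{f\ci J\}\|\ci{\gb_1^q}$; analogously $\|g\|\ci{\BMOs_{q'}} \asymp \|\{g\ci J\}\|\ci{\gb_\infty^{q'}}$ by the very definition of $\BMOs_{q'}$. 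The natural pairing
\[
\La f, g\Ra := \sum_{I \in \cL} \int_I (\Delta\ci I f)(\Delta\ci I g)\,dx + \sum_{I \in \Aif} \int_I (\E\ci I f)(\E\ci I g)\,dx
\]
then coincides with $\sum_J f\ci J g\ci J |J|$ in sequence form, and Theorem \ref{t2.3} immediately gives $|\La f, g\Ra| \le C\,\|f\|\ci{\cH^1_q}\|g\|\ci{\BMOs_{q'}}$, yielding the inclusion $\BMOs_{q'} \hookrightarrow (\cH^1_q)^*$ (and likewise for the extended versions).

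For the converse direction, given $\Lambda \in (\cH^1_q)^*$ I would extend it by Hahn--Banach to the whole of $L^1(\ell^q)$, obtaining a representing $\tilde g \in L^\infty(\ell^{q'})$ of the same norm. Then I project $\tilde g$ onto the formal martingale-difference subspace: for each $J \in \chld(I)$ set
\[
g^*\ci J := \fint_J \tilde g\bigl(\,\cdot\,, \rk(I)+1\bigr)\,dy, \qquad (\Delta\ci I g)(J) := g^*\ci J - \La g^*\Ra\ci I,
\]
and in the extended case add $\E\ci I g := \fint_I \tilde g$ in the $\Aif$-slot of $I$. The mean-zero property $\sum_{J\in\chld(I)}(\Delta\ci I f)(J)|J|=0$ makes the subtracted constant $\La g^*\Ra\ci I$ disappear from the pairing, so one checks $\Lambda(f) = \La f, g\Ra$ on finite sums.

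The essential technical step---and the main obstacle---will be verifying that the constructed $g$ lies in $\BMOs_{q'}$. The uniform bound $\|\Delta\ci I g\|_\infty \le 2\|\Lambda\|$ (and $\|\E\ci I g\|_\infty \le \|\Lambda\|$ in the extended case) is immediate from the pointwise estimate $|g^*\ci J| \le \|\tilde g\|\ci{L^\infty(\ell^{q'})}$. For the $\gb_\infty^{q'}$-condition on $\{g\ci J\}$ I would work with the equivalent $\gb_\infty^{q',(r)}$-norm (Theorem \ref{t2.3}) for any $r \in (1,\infty)$, and invoke the vector-valued Fefferman--Stein theorem (Theorem \ref{tFS}) exactly as in the proof of Theorem \ref{t2.3}. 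The key observation is that the creation levels $k_J := \rk(\text{parent}(J))+1$ are distinct along every chain $\{J \ni x\}$ (since $\rk$ is strictly increasing as intervals shrink), so $g^*\ci J$ is pointwise dominated by $M\tilde g(x, k_J)$ at a distinct level. Truncating to $I$ and applying Fefferman--Stein gives
\[
\int_I \Bigl(\sum_{J\subset I,\, J\ni x} |g^*\ci J|^{q'}\Bigr)^{r/q'}\!dx \le \|M(\tilde g\1\ci I)\|^r_{L^r(\ell^{q'})} \le C|I|\|\Lambda\|^r,
\]
which is the desired $\gb_\infty^{q',(r)}$-bound on $\{g^*\ci J\}$. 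An identical estimate controls the sequence of subtracted means $\{\La g^*\Ra\ci I\}$, so $\{g\ci J\} \in \gb_\infty^{q'}$ with norm $\lesssim \|\Lambda\|$, completing the duality.
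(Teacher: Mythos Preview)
Your proposal is correct and follows the same overall strategy as the paper: embed $\cH^1_q$ into a larger space, extend the functional by Hahn--Banach, then project the representing element back onto martingale differences by subtracting the appropriate averages. The only difference is a matter of packaging. The paper extends to $\gb_1^q$ and invokes Theorem~\ref{t2.3} as a black box to obtain a representative $\wt g\in\gb_\infty^{q'}$; the verification that the mean-subtracted sequence stays in $\gb_\infty^{q'}$ is then an elementary $L^{q'}$-contraction estimate, since subtracting the average over $I$ at most doubles $\|\sum_{J\in\chld(I)}\wt g\ci J\1\ci J\|_{q'}$. You instead extend all the way to $L^1(\ell^q)$ and redo the Fefferman--Stein step from inside the proof of Theorem~\ref{t2.3}. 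Both routes work; the paper's is slightly cleaner because the Fefferman--Stein machinery has already been absorbed into Theorem~\ref{t2.3}, so only the mean-zero projection remains. One small point: your level assignment $k_J=\rk(I)+1$ differs from the canonical $\gb_p^q$ convention $k_J=\rk(J)$, but since both assignments are injective along any chain $\{J\ni x\}$ this causes no trouble---and in fact your choice makes all children of a fixed $I$ share one level, which is exactly what makes your ``identical estimate'' for the subtracted means $\La g^*\Ra\ci I=\fint_I\tilde g(\,\cdot\,,k)$ go through.
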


\begin{proof}

We will prove the duality between ${\cH^1_q}$ and $\BMOs_{q'}$, the duality between $\wt\cH^1_q$ and $\BMOs_{q'}^\sim$ follows trivially.

The sufficiency of the condition $g\in \BMOs_{q'}$ for the boundedness of the linear functional $f\mapsto \La f, g\Ra$ on ${\cH^1_q}$ follows immediately from Theorem \ref{t2.3}.

To prove the necessity of this condition, let us note, that by the definition ${\cH^q_1}$ can be identified with a subspace of $\gb_1^q$ (by identifying the family $\{\Delta\ci I g\}\ci{I\in\cL}$ with an element in $\gb_1^q$ as described in Remark \ref{r3.1}).

Thus a linear functional $\f$ on ${\cH^1_q}$ can be extended by Hahn--Banach theorem to a functional on $\gb_1^q$, so by Theorem  \ref{t2.3} there exists $\wt g = \{ \wt g\ci I \}\ci{I\in\cL} \in \gb_\infty^{q'}$,   $\|\wt g \|\ci{\gb_\infty^{q'}} \le C\|\f\|$ such that the  functional $\f$ is given by
\begin{equation}
\label{3.3}
\f(f)
=\int_\X \sum_{I\in\cL} \sum_{J\in\chld(I)} \Delta\ci I f (x) \wt g\ci J dx  
=\sum_{I\in\cL} \,\,\int_\X \sum_{J\in\chld(I)} \Delta\ci I f (x) \wt g\ci J dx .
\end{equation}
We would like to interpret the function $\sum_{J\in\chld(I)} \wt g\ci J \1\ci J$ as a martingale difference,
but this function does not have zero average.  But since $\int_\X \Delta\ci I f dx = 0$, the integrals in the right side of \eqref{3.3} do not change if we subtract from $\wt g\ci J$, $J\in\chld(I)$ a constant $c=c\ci I$.

Therefore, if for $J\in\chld(I)$ we define $g\ci J := \wt g\ci J - |I|^{-1} \sum_{I'\in\chld(I)} \wt g\ci{I'} |I'|$, we get that
\[
\f(f) =  \sum_{I\in\cL} \,\,\int_\X \sum_{J\in\chld(I)} \Delta\ci I f (x)  g\ci J dx  .
\]
But now  the functions $\sum_{J\in\chld(I)}  g\ci J \1\ci J$ have zero average, so we can treat them as martingale differences.

Let us check that $\{ g\ci I \}\ci{I\in\cL} \in \gb_\infty^{q'}$. Using the fact that the averaging operator $f\mapsto \La f\Ra\ci I \1\ci I$ is a contraction in all $L^p$, $p\in[1, \infty]$ (it follows immediately from H\"{o}lder inequality), we can see that
\[
\Bigl\| \sum_{J\in\chld(I)} g\ci J \1\ci J \Bigr\|_{q'} \le 2
\Bigl\| \sum_{J\in\chld(I)} \wt g\ci J \1\ci J \Bigr\|_{q'}
\]
Using this inequality we get that for $I_0\in\cL$
\begin{align*}
\sum_{J\in\cL: J\subset I_0} |g\ci J|^{q'}|J|  & = |g\ci{I_0}|^{q'} \cdot |I_0| +
\sum_{I\in\cL: I\subset I_0} \Bigl\| \sum_{J\in\chld(I)} g\ci J \1\ci J \Bigr\|_{q'}^{q'}
\\ &
\le |g\ci{I_0}|^{q'} \cdot |I_0| +
2^{q'} \sum_{I\in\cL: I\subset I_0} \Bigl\| \sum_{J\in\chld(I)} \wt g\ci J \1\ci J \Bigr\|_{q'}^{q'}
\\ & \le |g\ci{I_0}|^{q'} \cdot |I_0| + 2^{q'} |I_0| \cdot \| \wt g \|^{q'}_{\gb_\infty^{q',(q')}}.
\end{align*}

Noticing that $|\wt g\ci{I_0}| \le \| \wt g \|_{\gb_\infty^{q',(q')}}$, and therefore $| g\ci{I_0}| \le 2\| \wt g \|_{\gb_\infty^{q',(q')}}$, and taking into account that  $\gb_\infty^{q',(r)}$ norms are equivalent for all $r\in[1, \infty)$, we conclude that $\|\{ g\ci I\}\ci{I\in\cL} \|_{\gb_\infty^{q'} }\le  C \|\wt g\|_{\gb_\infty^{q'} }$.   

Thus $g =\{g\ci I \}\ci{I\in\cL} \in \BMOs_{q'}$.
\end{proof}

\subsection{\texorpdfstring{$\BMOs_q$}{BMO<sub>q} as function spaces}

\begin{prop}
\label{p.MBO-FS}
For $q\in [1, 2]$ the space $\BMOs_q$ can be identified with a function space, i.e.~for each formal martingale decomposition $f=\sum_{I\in\cL} \Delta\ci I f \in\BMOs_q$ there exists a locally integrable function $\tilde f$ such that for all $I\in\cL$
\[
\Delta\ci I f = \Delta\ci I \tilde f.
\]
A similar statement holds for spaces $\BMOs_q^\sim$ as well.
\end{prop}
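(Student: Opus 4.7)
For $I_0 \in \cL$, denote by $\tilde g_{I_0}$ the formal sum $\sum_{I \in \cL,\, I \subset I_0} \Delta\ci I f$. The strategy is: (i) use the elementary embedding $\ell^2 \hookrightarrow \ell^q$ (valid for $q \le 2$) together with the $\BMOs_q$ hypothesis to upgrade $\tilde g_{I_0}$ to an honest $L^2(I_0)$-function via Burkholder's theorem; (ii) observe that the local representatives $\tilde g_{I_0}$ and $\tilde g_{I_1}|_{I_0}$ agree up to an additive constant whenever $I_0 \subset I_1$; (iii) glue atom-by-atom. This exactly matches the scheme used to identify the classical dyadic $\BMO$ with a function space.

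For step (i), note the pointwise inequality
\begin{equation*}
S(\tilde g_{I_0})(x) = \Bigl(\sum_{I \subset I_0} |\Delta\ci I f(x)|^2\Bigr)^{1/2} \le \Bigl(\sum_{I \subset I_0} |\Delta\ci I f(x)|^q\Bigr)^{1/q},
\end{equation*}
which crucially uses $q \le 2$. Applying condition \eqref{3.1} of the $\BMOs_q$ definition with, say, $r=2$, the right-hand side belongs to $L^2(I_0)$. Hence $S(\tilde g_{I_0}) \in L^2(I_0)$, and Burkholder's theorem (Theorem \ref{t-Burkh}), applied on the finite measure space $I_0$, forces the formal series $\tilde g_{I_0}$ to converge unconditionally in $L^2(I_0)$ to a function of zero average. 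Continuity of each $\Delta\ci I$ on $L^2(I_0)$ together with the orthogonality $\Delta\ci I \Delta\ci J = 0$ for $I \ne J$ yields $\Delta\ci I \tilde g_{I_0} = \Delta\ci I f$ for every $I \subset I_0$.

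For step (ii), if $I_0 \subsetneq I_1$ in $\cL$, then
\begin{equation*}
\tilde g_{I_1}\big|_{I_0} - \tilde g_{I_0} \;=\; \sum_{J \in \cL,\; I_0 \subsetneq J \subset I_1} (\Delta\ci J f)\big|_{I_0},
\end{equation*}
and each summand is constant on $I_0$ since $I_0$ is contained in a single child of $J$. Hence the two local definitions differ by a single additive constant on $I_0$. For step (iii), partition $\X$ (up to a Lebesgue-null set) into the $\Aio$-atoms. Each atom $A$ is exhausted by an increasing chain $I_0^A \subset I_{-1}^A \subset I_{-2}^A \subset \ldots$ in $\cL$ (the defining chain from $\mathfrak A_{-\infty}$, read in the direction of growing intervals). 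Set $\tilde f := \tilde g_{I_0^A}$ on $I_0^A$ and inductively extend: having defined $\tilde f$ on $I_{-n}^A$, put $\tilde f := \tilde g_{I_{-n-1}^A} - c_n$ on $I_{-n-1}^A$, where $c_n$ is the unique constant from step (ii) that makes the new definition agree with the old on $I_{-n}^A$. Doing this on each atom produces a well-defined $\tilde f \in L^2_{\mathrm{loc}}(\X) \subset L^1_{\mathrm{loc}}(\X)$. Finally, for any $I \in \cL$, take $n$ large so that $I \subset I_{-n}^A$ for the atom $A$ containing $I$; then $\tilde f$ agrees with $\tilde g_{I_{-n}^A}$ up to a constant on $I_{-n}^A$, so $\Delta\ci I \tilde f = \Delta\ci I \tilde g_{I_{-n}^A} = \Delta\ci I f$. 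The statement for $\BMOs_q^\sim$ follows by adding the locally bounded function $\sum_{I \in \Aif}(\E\ci I f)\1\ci I$ to $\tilde f$.

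\textbf{Main obstacle.} The analytic content is concentrated in step (i), where the hypothesis $q \le 2$ is genuinely used through $\|\cdot\|_{\ell^2} \le \|\cdot\|_{\ell^q}$; absent this, the $\BMOs_q$ bound would give no control on the square function and $\tilde g_{I_0}$ need not converge to any function at all, which is precisely why the proposition fails for $q > 2$. Step (iii) is essentially bookkeeping, but requires the structural remark that each $\Aio$-atom is a countable nested union of $\cL$-intervals, so that the constants $c_n$ may be absorbed one step at a time to produce a single function $\tilde f$.
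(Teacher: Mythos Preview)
Your proof is correct and follows essentially the same strategy as the paper: use $\|\cdot\|_{\ell^2} \le \|\cdot\|_{\ell^q}$ for $q\le 2$ together with the $\BMOs_q$ bound to get $L^2$-convergence of the partial sums on each fixed $I_0\in\cL$, and then glue along an exhausting chain of each $\mathfrak A_{-\infty}^0$-atom by absorbing the additive constants coming from the spine differences. The only cosmetic difference is that the paper separates the finite- and infinite-measure atoms and writes the subtracted constants explicitly as the values $\alpha_s$ of $\Delta_{I_s} f$ on $I_{s-1}$, while your inductive gluing handles both cases at once.
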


\begin{rem*}
As it can be seen from a simple example below, the martingale difference decomposition $f = \sum_{I\in\cL} \Delta\ci I f \in \BMO_q$, $q\in (1, 2]$ does not necessarily converge if $|\X|=\infty$.  

Nemely, let $\cL$ be the standard dyadic lattice $\cD$ in $\R$, and let $I_k = [0, 2^k)$, $k\in\N$. Consider the formal martingale sum $f=\sum_{k=1}^\infty \Delta\ci{I_k} f$, where
\[
\Delta\ci{I_k} f = \1_{[0, 2^{k-1})} - \1_{[ 2^{k-1}, 2^k)} .
\]
It is easy to see that $f\in\BMOs_q$ for all $q\in (1, \infty)$, but the series clearly diverges.
\end{rem*}

\begin{proof}[Proof of Proposition \ref{p.MBO-FS}]
It is sufficient to analyze the convergence on each interval $J\in \mathfrak A_{-\infty}^0$ separately.  

If $|J|<\infty$, then the series $f\ci J := \sum_{I\in\cL: I \subset J} \Delta\ci I f$ belongs to $\cH_q^2$.  
Therefore, as it was discussed before in Section \ref{s.Hp}, the series converges to an $L^2$ function.

Let now consider $J\in \mathfrak A_{-\infty}^0$ such that $|J| = \infty$. It is not hard to see that any such interval can be represented as the union
\[
J = \bigcup_{k\ge 1} I_k, \qquad I_k\in \chld(I_{k+1})\ \forall k \ge 1
\]
(note, that here $k$ is \emph{not} the number of generation).

Let
\[
\cL(J):= \{ I\in \cL: I\subset J , I\ne I_k \forall k \in \N\}
\]
 so the collection $\{I\in\cL :I\subset J\} $ is split into a disjoint union of $\cL$ and the set $\{I_k, :k\in\N\}$.

For $k=2, 3, \ldots $ let $\alpha_k$ be the value of $\Delta\ci{I_k}$ on $I_{k-1}$, ald let $\alpha_0 =0$. Define the function $\tilde f$ on $J$ by
\[
\tilde f := \sum_{I\in\cL(J)} \Delta\ci I f + \sum_{s\in\N} \Bigl( \Delta\ci{I_s} f - \alpha_s \1\ci{J} \Bigr)
\]
Let us show that the sum restricted to any of the above  intervals $I_k$ converges in $L^2(I_k)$. This will immediately imply that   $\Delta\ci I \tilde f = \Delta\ci I f$ for all $I\in\cL$,  $I\subset J$.  
 
The second sum trivially converges, because $\Delta\ci{I_s} f - \alpha_s \1\ci{J} =0$ on $I_k$ if  $s>k$.

Let us show the convergence of the first sum. Note, that we only need to count the terms $\Delta\ci I f $ with $I\in\cL(J)$, $I\subset I_k$, because the terms with $I\in\cL(J)$, $I\not\subset I_k$ are zero on $I_k$.

%

Condition $f\in \BMOs_q$ implies that
\[
\sum_{I\in\cL(J): I\subset I_k} \Delta\ci I f \in \cH^2_q \subset \cH^2_2,
\]
so the sum converges in $L^2(I_k)$.
\end{proof}

\section{ \texorpdfstring{$L^p$}{L**p} bounds of paraproducts
}

\subsection{Martingale differences do not form a strong unconditional basis in  \texorpdfstring{$H^p$}{H**p} in the non-homo\-gen\-eous case}

In \cite{NaTr-AA} the notion of a strong unconditional basis was introduced. A system of nontrivial  subspaces $\cE_j$ (of a Banach space $X$), $j\in \cJ$ (where $\cJ$ is a some countable set) was called a \emph{strong unconditional basis} if 
\begin{enumerate}
	\item The  linear span  $\mathcal{L}\{\E_{j}\,:\,{j\in\cJ}\}$ is dense 
in $X$;

	\item There exists an ideal Banach space $Y$ of sequences 
$\{c_{j}\}\ci {j\in\cJ}$ and a constant $A>0$ such
that for any sequence $\{x_{j}\}\ci {j\in\cJ}$, $x_{j}\in \cE_{j}$ with
finitely many non-zero elements
\[
\frac1A \Bigl\| \sum_{j\in \cJ} x_{j} \Bigr\|\ci {X} \le
\Bigr\| \{\| x_{j} \|\}\ci {j\in\cJ} \Bigl\|\ci {Y} \le A
\Bigl\| \sum_{j\in \cJ} x_{j} \Bigr\|\ci {X}\,.
\]
\end{enumerate}
Recall that a Banach space $Y$ of 
sequences $\{ c_{j}\}\ci{j\in\cJ}$ of complex numbers is called {\em ideal}
if for any sequence of factors $\alpha_{j}$, $|\alpha_{j}|\le 1$ the 
sequence $\{ \alpha_{j}c_{j}\}\ci {j\in\cJ}\in Y$ and
$\| \{ \alpha_{j}c_{j}\}\ci {j\in\cJ} \|\ci{Y} \le \| \{ c_{j}\}\ci {j\in\cJ}
\|\ci{Y}$ 

Note, that a strong unconditional basis is an unconditional basis, meaning that any vector $x\in X$ admits a unique representation 
	\[
	x=\sum_{j\in\cJ} x_j, \qquad x_j\in\cE_j, 
	\]
and the series converges unconditionally, i.e.~independently of the ordering of 
$\cJ$. 
	
One can easily see that the martingale difference spaces $D\ci I = \Delta\ci I$ form an unconditional basis	in $H^p$, $p\in[1, \infty)$. It is also well known that for a homogeneous lattice $\cL$ the subspaces $D\ci I$ form a strong unconditional basis. 

Unfortunately, as we demonstrate below, that is not the case in the general situation. 
	
If the system of the martingale difference spaces $D\ci I$ were a strong unconditional basis, one could guess that the natural ``coefficient space''  for $H^p$ should be the Triebel--Lizorkin type space $\gb^2_p$.

In other words, one could guess that 
one could get an equivalent norm in $H^p$ by replacing the functions $\Delta\ci I f$ in the square function by multiples of $\1\ci I$. The norms have to be equivalent on singletons $f =\Delta\ci I f$, so if one wants replace functions $\Delta\ci I f $ by $c\ci I \1\ci I$,  $c\ci I = c\ci I(f)$, the functions $\Delta\ci I f $ and $c\ci I \1\ci I$ should have equivalent $L^p$ norms (uniformly in $I$).

If everything works when the norms of $\Delta\ci I f $ and $c\ci I \1\ci I$ are equivalent, it works when they are equal. So everything reduces to the question on whether the quantity
\begin{equation}
\label{p-norm-1}\Bigl\| \Bigl( \sum_{I\in\cL}\bigl( \E\ci I |\Delta\ci I f |^p \bigr)^{2/p}  \Bigr)^{1/2} \Bigr\|_p
\end{equation}
gives an equivalent norm on $H^p$.   

The answer is well known to be ``yes'' in the case when the lattice if of homogeneous type. In fact, in this case for $q\in [1, \infty)$ the  averages
$(\E\ci I |\Delta\ci I|^q )^{1/q}$ are equivalent, so one can replace $\Delta\ci I f$ by any of these averages (the case $q=2$ is usually considered in the literature).

In the general case, as the theorem below asserts, only ``half'' of necessary inequalities is holds, so the answer is unfortunately ``no''. 

Note, that Theorem \ref{Lp-av} does not imply that the system of martingale difference spaces $D\ci I$ is not a strong unconditional basis: it only implies that a particular norm on coefficient space does not give an equivalent norm. However, modifying the proof of Theorem \ref{Lp-av} one can show that indeed the martingale difference spaces $D\ci I$ do not form a strong unconditional basis in $H^p$.     

\begin{thm} 
\label{Lp-av}
Let $f\in H^p$.   
\begin{enumerate}
\item For $p\in[1, 2]$ the inequality
\begin{equation}
\label{p-norm-2}
\Bigl\| \Bigl( \sum_{I\in\cL}\bigl( \E\ci I |\Delta\ci I f |^p \bigr)^{2/p}  \Bigr)^{1/2} \Bigr\|_p
 \le
C  
\Bigl\| \Bigl( \sum_{I\in\cL} |\Delta\ci I f |^2  \Bigr)^{1/2} \Bigr\|_p
\end{equation}
holds; here $C=C_p$ and does not depend on $f$ and $\cL$.

\item For $p\in [2, \infty)$ the opposite inequality
\begin{equation}
\label{p-norm-3}
\Bigl\| \Bigl( \sum_{I\in\cL} |\Delta\ci I f |^2  \Bigr)^{1/2} \Bigr\|_p
 \le
C  
\Bigl\| \Bigl( \sum_{I\in\cL}\bigl( \E\ci I |\Delta\ci I f |^p \bigr)^{2/p}  \Bigr)^{1/2} \Bigr\|_p
\end{equation}
holds with $C=C_p$.
\item For $p\in(2, \infty)$ the inequality \eqref{p-norm-2} fails, i.e.~for each $p>2$ one can find a lattice $\cL$ and $f\in H^p$ such that the left side of \eqref{p-norm-2} is infinite (while the right side is finite because $f\in H^p$).    

\item  For $p\in [1, 2)$ the inequality \eqref{p-norm-3} fails (in the same sense an in the statement \textup{\cond3})

\end{enumerate}
\end{thm}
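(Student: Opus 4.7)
The plan is to prove the two forward inequalities (i)--(ii) via duality, using the identification $(\gb_p^2)^* = \gb_{p'}^2$ from Proposition right after the statement of Theorem~\ref{t2.3} and the Fefferman--Stein vector maximal inequality (Theorem~\ref{tFS}), and then to construct explicit counterexamples for (iii)--(iv) using nested non-homogeneous towers with extreme binary splittings.

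For (i), I would exploit the identification $\|\tilde S_p f\|_p = \|\{\gamma_I\}\|_{\gb_p^2}$ where $\gamma_I := (\fint_I |\Delta_I f|^p)^{1/p}$. Pairing $\{\gamma_I\}$ against a unit vector $\{b_I\} \in \gb_{p'}^2$ (with $p'\ge 2$), I would use H\"older duality on each $(I, dx/|I|)$ to write
\[
\gamma_I |I| = \sup\Bigl\{\int_I \Delta_I f \cdot h_I\,dx : \fint_I |h_I|^{p'} \le 1\Bigr\}
\]
and then apply Cauchy--Schwarz in the $\ell^2$-index of $I$'s together with H\"older in $L^p$--$L^{p'}$ to obtain
\[
\sum_I b_I \gamma_I |I| \;\le\; \Bigl\|\bigl(\sum_I b_I^2 h_I^2\bigr)^{1/2}\Bigr\|_{p'}\,\|Sf\|_p.
\]
Since $p'\ge 2$, Minkowski's integral inequality followed by the vector-valued Fefferman--Stein theorem is expected to control the first factor by $C\|\{b_I\}\|_{\gb_{p'}^2}$ uniformly in admissible $\{h_I\}$; passing to the supremum in $\{b_I\}$ yields (i). For (ii), I would run the symmetric dual argument: writing $\|Sf\|_p = \|(\Delta_I f)\|_{L^p(\ell^2)}$ and testing against $(g_I) \in L^{p'}(\ell^2)$ with $p'\le 2$, I would use the zero-mean of $\Delta_I f$ to replace $g_I$ by the oscillation $g_I - \fint_I g_I$, apply H\"older on each $I$ to produce the factor $\gamma_I$, and close using the $\gb_p^2$--$\gb_{p'}^2$ duality. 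The \emph{main obstacle} in both parts is exactly the vector-valued bound
\[
\Bigl\|\bigl(\sum_I b_I^2 h_I^2\bigr)^{1/2}\Bigr\|_{p'} \;\lesssim\; \|\{b_I\}\|_{\gb_{p'}^2},
\]
uniformly in admissible $\{h_I\}$; this is where the non-homogeneous geometry makes things subtler than in the dyadic case, and where Fefferman--Stein is essential.

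For (iii) and (iv), I would construct a tower of nested intervals $I_0 \supset I_1 \supset I_2 \supset \cdots$, each $I_k$ split into a large child $I_{k+1}$ of mass $(1-\delta)|I_k|$ and a small sibling $J_{k+1}$ of mass $\delta|I_k|$ for small $\delta>0$. Placing $\Delta_{I_k} f = a_k \1\ci{I_{k+1}} + b_k \1\ci{J_{k+1}}$ with zero-mean constraint $a_k(1-\delta) + b_k\delta = 0$ (so $|b_k|\sim a_k/\delta$), and decomposing $I_0$ as the disjoint union of the siblings $J_m$, a direct computation in the regime $N \ll 1/\delta$ gives
\[
\|Sf\|_p^p \;\sim\; \delta^{1-p}\sum_k a_k^p, \qquad \|\tilde S_p f\|_p^p \;\sim\; \delta^{1-p}\bigl(\sum_k a_k^2\bigr)^{p/2}.
\]
For (iii), $p>2$: choosing $a_k = k^{-2/(p+2)}$ makes $\sum a_k^p < \infty$ while $\sum a_k^2 = \infty$, and by concatenating many such towers on disjoint intervals (with parameters $\delta_n, N_n$ tuned so that $\sum \delta_n^{1-p} N_n < \infty$ but $\sum \delta_n^{1-p} N_n^{p/2} = \infty$, which is possible precisely because $N^{p/2-1}\to\infty$ for $p>2$), one produces $f\in H^p$ with $\tilde S_p f \notin L^p$. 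For (iv), $p<2$: the truncated family $f_N$ with $a_k \equiv 1$ and $N$ levels satisfies
\[
\frac{\|Sf_N\|_p}{\|\tilde S_p f_N\|_p} \;\sim\; N^{1/p - 1/2} \;\longrightarrow\; \infty \qquad (N\to\infty),
\]
which precludes any uniform constant in \eqref{p-norm-3}; a concatenation as in (iii) then produces a single $f\in H^p$ witnessing the failure.
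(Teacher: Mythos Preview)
Your treatment of \textup{(ii)}, \textup{(iii)} and \textup{(iv)} is essentially the paper's. For \textup{(ii)} the paper also pairs against $g\in H^{p'}$, applies H\"older on each $I$ to produce $(\fint_I|\Delta_I g|^{p'})^{1/p'}$, and closes with \textup{(i)} at exponent $p'\le 2$; your dual argument is the same. Your towers with a large child of mass $(1-\delta)|I_k|$ and a small sibling of mass $\delta|I_k|$ are exactly the paper's construction (the paper takes $\delta=1/n$ and $N=n$), and your asymptotics $\|Sf\|_p^p\sim\delta^{1-p}N$, $\|\tilde S_pf\|_p^p\sim\delta^{1-p}N^{p/2}$ in the regime $N\delta\ll1$ are correct.

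The gap is in \textup{(i)}. Your ``main obstacle''
\[
\Bigl\|\Bigl(\sum_I b_I^2 h_I^2\Bigr)^{1/2}\Bigr\|_{p'}\;\lesssim\;\|\{b_I\}\|_{\gb_{p'}^2},
\qquad \fint_I|h_I|^{p'}\le 1,\quad p'\ge 2,
\]
is precisely statement \textup{(ii)} (at exponent $p'$, for general functions in place of martingale differences). Since your proof of \textup{(ii)} rests on \textup{(i)}, invoking this estimate to prove \textup{(i)} is circular. The proposed route ``Minkowski then Fefferman--Stein'' does not break the circle: Minkowski (valid since $p'\ge2$) gives only
\[
\Bigl\|\Bigl(\sum_I b_I^2 h_I^2\Bigr)^{1/2}\Bigr\|_{p'}\;\le\;\Bigl(\sum_I b_I^2\,|I|^{2/p'}\Bigr)^{1/2},
\]
and the right side is \emph{not} controlled by $\|\{b_I\}\|_{\gb_{p'}^2}$ in general --- take pairwise disjoint intervals $I_1,\dots,I_n$ of unit length with $b_{I_k}\equiv1$: the Minkowski bound is $\sqrt n$ while $\|\{b_I\}\|_{\gb_{p'}^2}=n^{1/p'}$. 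Fefferman--Stein goes the wrong way here (it would let you replace $b_I\1_I$ by a maximal function of $b_I h_I$, not the reverse).

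The paper avoids this by a different reduction. Raising everything to the $p$-th power sends the problem to $\gb_1^{\,2/p}$: with $x_J$ the value of $|\Delta_If|^p$ on $J\in\chld(I)$ one has $\|\{x_J\}\|_{\gb_1^{2/p}}=\|Sf\|_p^p$, and the averaged side is $\|\Av x\|_{\gb_1^{2/p}}$ for the averaging operator $(\Av x)_I=|I|^{-1}\sum_{J\in\chld(I)}x_J|J|$. So \textup{(i)} is the boundedness of $\Av$ on $\gb_1^{2/p}$, which the paper gets by showing the adjoint $(\Av^*x)_I=x_{\widetilde I}$ (forward shift to the parent) is bounded on $\gb_\infty^{(2/p)'}$ --- a two-line BMO-type calculation. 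This is the missing idea in your plan for \textup{(i)}.
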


\subsubsection{Proof of two estimates in Theorem \ref{Lp-av}   }
 To prove statement \cond1, let us consider the sequence $\{|\Delta\ci I f|^p\}\ci{I\in\cL}\in \gb_1^q$, $q=2/p$, where as in Remark \ref{r3.1} $|\Delta\ci I f|^p$ defines entries $x\ci J$, $J\in\chld(I)$, $x\ci J$ being the value of $|\Delta\ci J f|^p$ on $J$.  Then the estimate \eqref{p-norm-2} follows from immediately from the boundedness of the averaging operator $\Av$ in $\gb_1^q$, $q=2/p$
\[
(\Av x)\ci I = |I|^{-1} \sum_{J\in\chld(I)} x\ci J |J|.
\]
To prove that $\Av$ is bounded, let us notice that its adjoint  $\Av^*$ is the ``forward shift''
\[
(\Av^* x)\ci I = x\ci{\wt I}, \qquad \wt I \text{ is a parent of } I.   
\]

We want to show that this operator is bounded in $\gb_\infty^{q'} = (\gb_1^q)^*$. If $x=\{x\ci I \}\ci{I\in\cL} \in \gb_\infty^q$, then for $J\in\cL$
\[
 \sum_{I\in\cL:I \subset J} | (\Av^* x)\ci I |^{q'} \1\ci I   =
 | x\ci{\wt J} |^{q'} 1\ci J + \sum_{I\in\cL:I \subset J} | x\ci I |^{q'} \1\ci I     
\]
where $\wt J$ is the ``parent'' of $J$. Since $|x\ci{\wt J}|\le \|x\|_{\gb_\infty^{q'}}$ and
\[
\fint_J \sum_{I\in\cL:I \subset J} | x\ci I |^{q'} \1\ci I  \le \| x\|_{\gb_\infty^{q'}}^{q'}
\]
we conclude that
\[
\fint_J \Bigl( \sum_{I\in\cL:I \subset J} | (\Av^* x)\ci I |^{q'} \1\ci I  \Bigr) dx \le 2 \| x\|_{\gb_\infty^{q'}}^{q'},
\]
which proves that $\Av^*$ is bounded. Therefore $\Av$ is a bounded operator in $\gb_1^q$, which proves \eqref{p-norm-2}.

Statement \cond2 follows from \eqref{p-norm-2} by duality. Namely, take $g\in H^{p'}$, $1/p+1/p' = 1$, $\|g\|_{H^{p'}}\le 1$ and estimate
\begin{align*}
\Bigl|\int_\X fg dx \Bigr|  & \le  \sum_{I\in\cL} \int_I |\Delta\ci I f \Delta\ci I g| dx
\\
& \le \sum_{I\in\cL} \int_I \bigl(\E\ci I| \Delta\ci I f|^p\bigr)^{1/p}
\bigl(\E\ci I| \Delta\ci I g|^{p'}\bigr)^{1/p'} dx \\
& =
 \int \sum_{I\in\cL} \bigl(\E\ci I| \Delta\ci I f|^p\bigr)^{1/p}
\bigl(\E\ci I| \Delta\ci I g|^{p'}\bigr)^{1/p'} dx
\\
& \le
\int \Bigl( \sum_{I\in\cL}\bigl(\E\ci I| \Delta\ci I f|^p\bigr)^{2/p} \Bigr)^{1/2}
\Bigl( \sum_{I\in\cL} \bigl(\E\ci I| \Delta\ci I g|^{p'}\bigr)^{2/p'} \Bigr)^{1/2}
\\
& \le
\Bigl\| \Bigl( \sum_{I\in\cL}\bigl( \E\ci I |\Delta\ci I f |^p \bigr)^{2/p}  \Bigr)^{1/2} \Bigr\|_p
\Bigl\| \Bigl( \sum_{I\in\cL}\bigl( \E\ci I |\Delta\ci I g |^{p'} \bigr)^{2/p'}  \Bigr)^{1/2} \Bigr\|_{p'}
\end{align*}
By \eqref{p-norm-2}
\[
\Bigl\| \Bigl( \sum_{I\in\cL}\bigl( \E\ci I |\Delta\ci I g |^{p'} \bigr)^{2/p'} \1\ci I \Bigr)^{1/2} \Bigr\|_{p'}
\le C \|g\|_{H^{p'}}
\]
So by taking supremum over $g\in H^{p'}$, $\|g\|_{H^{p'}}\le 1$ and taking into account that the dual of $H^p$ is isomorphic to $H^{p'}$, we get
\[
\|f\|_{H^p} \le C \Bigl\| \Bigl( \sum_{I\in\cL}\bigl( \E\ci I |\Delta\ci I f |^p \bigr)^{2/p}  \Bigr)^{1/2} \Bigr\|_p, 
\]
which is exactly condition \cond2. \hfill \qed

\subsubsection{Counterexamples in Theorem \ref{Lp-av}} 
\label{s.c-ex:averaging}
To prove \cond3, take $I_0= [0, 2)$. Fix $n\in\N$, $n>2$ and let 
\[
I_k = [0, r^k), \qquad J_k = [r^k, r^{k-1}), 
\]
where $r=1-1/n$,  $k=1, 2, \ldots n$. Note that $I_{k-1}$ is a disjoint union of $I_k$ and $J_k$.

We assume here that $I_k, J_k\in\cL_k$; we will only consider functions whose only non-zero martingale differences are $\Delta\ci{I_{k-1}} f$, $k=1, 2, \ldots, n$, so the other intervals in $\cL_k$ are irrelevant for our construction.

For $k=1, 2, \ldots, n$ define
\[
\Delta\ci{I_{k-1}} f = \1\ci{J_k} - \alpha \1\ci{I_k}
\]
where
$
\alpha= 1/(n-1)$, so $\int_\X \Delta\ci{I_{k-1}} f dx =0$. 


We can estimate that on $I_0$
\begin{align*}
\Bigl( \sum_{k=0}^{n-1} |\Delta\ci{I_k} f |^2  \Bigr)^{1/2} \le \Bigl( 1 + \sum_{k=1}^n \alpha^2 \Bigr)^{1/2} \le \Bigl( 1 + n \frac{1}{(n-1)^2} \Bigr)^{1/2} \le 2^{1/2}
\end{align*}
(each point $x\in I_0$ belongs to at most one of the intervals $J_k$, which  contributes $1$ to the sum, and each $I_k$ contributes $\alpha^2$). Therefore
\begin{equation}
\label{p-norm-5}
\Bigl\|  \Bigl( \sum_{k=0}^{n-1} |\Delta\ci{I_k} f |^2  \Bigr)^{1/2} \Bigr\|_p \le 2^{1/2} .
\end{equation}

On the other hand for $x\in I_{k-1}$
\[
\E\ci{I_{k-1}} |\Delta\ci{I_{k-1}} f |^p = \frac1n  + (1-1/n) \alpha^p  \ge \frac1n
\]
so for $x\in I_n$
\[
\sum_{k=1}^{n} \bigl(\E\ci{I_{k-1}} |\Delta\ci{I_{k-1}} f |^p \bigr)^{2/p} \ge  n \left(\frac1{n}\right)^{2/p}
=  n^{1-2/p}.
\]
Since $p>2$ we have $n^{1-2/p}\to \infty$ as $n\to \infty$, so by increasing $n$ we can make the left side of \eqref{p-norm-2} as large as we want (because $|I_n|=(1-1/n)^n> 1/2e$ for sufficiently large $n$). But by \eqref{p-norm-5} the right side of \eqref{p-norm-2} is uniformly bounded. Thus, the uniform (in all lattices) estimate \eqref{p-norm-2} fails.

Repeating the construction (with $n\to \infty$) on disjoint intervals, we get a lattice where the uniform (in $f$) estimate \eqref{p-norm-2} fails. But from here one can easily construct a function such that the right side of \eqref{p-norm-2} is finite, but the left side is infinite.

The same construction allows us to prove statement \cond4 as well. Namely, we can easily see that on $J=\cup_{k=1}^n J_k$
\[
\Bigl( \sum_{k=0}^{n-1} |\Delta\ci{I_k} f |^2  \Bigr)^{1/2} \ge 1. 
\]
Note that $|J|=1-r^n = 1-(1/n)^n$, so  for sufficiently large $n$, we can estimate that $|J|>1/2$. Therefore
\[
\Bigl\|  \Bigl( \sum_{k=0}^{n-1} |\Delta\ci{I_k} f |^2  \Bigr)^{1/2} \Bigr\|_p \ge 2^{-1/p} .
\]
On the other hand for $x\in I_{k-1}$
\[
\E\ci{I_{k-1}} |\Delta\ci{I_{k-1}} f |^p = \frac1n  + (1-1/n) \alpha^p =   \frac1n + \left( \frac{n-1}{n} \right) \left(\frac1{n-1}\right)^p \le \frac2n
\]
so for $x\in I_0$
\[
\sum_{k=1}^{n} \bigl(\E\ci{I_{k-1}} |\Delta\ci{I_{k-1}} f |^p \bigr)^{2/p} \le  n \left(\frac2{n}\right)^{2/p}
=  2^{2/p} n^{1-2/p}.
\]
Therefore
\[
\Bigl\| \Bigl( \sum_{k=0}^{n-1}\bigl( \E\ci{I_k} |\Delta\ci {I_k} f |^p \bigr)^{2/p}  \Bigr)^{1/2} \Bigr\|_p \le 2^{1/p} n^{1/2-1/p} \to 0 \qquad \text{as } n\to \infty, 
\]
because $p<2$.

So, for $p\in [1, 2)$ the uniform estimate \eqref{p-norm-3} fails, and from here is is easy to get a function for which the right side is finite, but the left side is infinite.  
\hfill \qed

\subsubsection{Not a strong unconditional basis}
\begin{prop}
There exist a lattice $\cL$ such that the martingale difference spaces $D\ci I$ do not form a strong unconditional basis. 
\end{prop}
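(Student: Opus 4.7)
The plan is to rule out every ideal sequence space $Y$ at once, by exhibiting two functions $f$ and $g$ with $\|\Delta\ci I f\|_p = \|\Delta\ci I g\|_p$ for every $I\in\cL$ but with $\|f\|_{H^p}/\|g\|_{H^p}$ unbounded. If the $D\ci I$ were a strong unconditional basis, the ideal property of $Y$ would give $\|f\|_{H^p}\asymp\|\{\|\Delta\ci I f\|_p\}\|\ci Y=\|\{\|\Delta\ci I g\|_p\}\|\ci Y\asymp\|g\|_{H^p}$, which such an example rules out. Note that this forces the counterexample lattice to have at least one $I$ with three or more children, since with two children each $D\ci I$ is one-dimensional and any two $x\ci I,y\ci I\in D\ci I$ with equal $L^p$-norm coincide up to sign, so the unconditional basis estimate of Burkholder already produces an ideal $Y$.

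First I would take the lattice of the counterexample in Theorem \ref{Lp-av}\cond3 and split the "small" child $J_k$ into two equal halves $J_k^{(1)},J_k^{(2)}$ of size $r^{k-1}/(2n)$ each (with $r=1-1/n$ and $|I_k|=r^k$), so that every $D\ci{I_{k-1}}$ becomes two-dimensional. Inside it I would single out the "symmetric" element $u_k=\1\ci{J_k^{(1)}}+\1\ci{J_k^{(2)}}-\alpha\1\ci{I_k}$ (the martingale difference of the Theorem \ref{Lp-av}\cond3 example, carrying a component on the rich child $I_k$) and the "anti-symmetric" element $v_k=\1\ci{J_k^{(1)}}-\1\ci{J_k^{(2)}}$ (supported entirely off $I_k$). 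A direct computation gives $\|u_k\|_p\sim\|v_k\|_p$ after a harmless rescaling, so putting $f=\sum_k c_k u_k$ and $g=\sum_k c_k v_k$ yields $\|\Delta\ci I f\|_p=\|\Delta\ci I g\|_p$ for all $I$.

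The qualitative gap between $f$ and $g$ is propagation: for $x\in J_k^{(j)}$ one has $(Sf)^2(x)=c_k^2+\alpha^2\sum_{i<k}c_i^2$, but $(Sg)^2(x)=c_k^2$, because $v_i$ vanishes on $I_i$ and therefore contributes nothing to any point lying in $I_k$. Thus $Sf\ge Sg$, and the excess $\alpha^2\sum_{i<k}c_i^2$ is precisely the "overcounting" that drove the failure of \eqref{p-norm-2} in Theorem \ref{Lp-av}\cond3. I would then tune the coefficients $c_k$ and take disjoint unions of such constructions for $n\to\infty$ (the same amplification device used at the end of the proof of Theorem \ref{Lp-av}\cond3 to convert a uniform estimate into an actual function) so that the integrated contribution of the propagation terms to $\|Sf\|_p^p$ becomes arbitrarily large while $\|Sg\|_p^p$ remains controlled by the local part $c_k^2$ and stays bounded.

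The hard part will be verifying that the propagation contribution actually dominates in $L^p$, not merely pointwise: in a single chain of length $n$ the excess on deep points is typically of the same order as the local term, and only accumulates into an unbounded $L^p$ gap if one stacks scales correctly, exactly as in the $n^{1/2-1/p}$ blow-up of Theorem \ref{Lp-av}\cond3. Once this amplification is in place, the conclusion is immediate by the dichotomy in the first paragraph, and the proposition follows.
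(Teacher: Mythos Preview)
Your overall strategy---producing $f,g$ with $\|\Delta\ci I f\|_p=\|\Delta\ci I g\|_p$ for every $I$ but with $\|f\|_{H^p}/\|g\|_{H^p}$ unbounded---is exactly the right one, and your remark that this forces intervals with at least three children is correct. But your specific choice of $g$ via the anti-symmetric $v_k=\1\ci{J_k^{(1)}}-\1\ci{J_k^{(2)}}$ cannot produce a gap. Since $v_k$ vanishes on $I_k$, there is no accumulation in $Sg$: on $J_k$ one has $(Sg)^2\asymp c_k^2$ and on $I_n$ one has $Sg=0$. Meanwhile the propagation terms $\alpha^2\sum_{i<k}c_i^2$ in $(Sf)^2$ are \emph{too small} to matter: for $p>2$ the power-mean inequality gives $(\sum_i c_i^2)^{p/2}\le n^{p/2-1}\sum_i c_i^p$, and combining this with $\alpha=1/(n-1)$ and $|J_k|\ge r^{n-1}/n$ yields $\alpha^p S_n^{p/2}|I_0|\le C\sum_k c_k^p|J_k|$. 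Hence $\|Sf\|_p\le C\|Sg\|_p$ uniformly in $n$ and in the coefficients $c_k$; the reverse inequality is pointwise. No tuning of the $c_k$ rescues this. Your reading of Theorem~\ref{Lp-av}\cond3 is inverted: what blows up there is the \emph{averaged} square function, not $Sf$ itself.

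The paper's construction avoids this by making \emph{both} $f$ and $g$ propagate into the large child. It takes four children (splitting both $J_k$ and $I_k$ in half), so that $D\ci{I_{k-1}}$ also contains a Haar function $h\ci{I_k}$ supported on $I_k$. One then sets $\Delta f=h\ci{J_k}+\alpha h\ci{I_k}$ (``spiky'': modulus $1$ on $J_k$, modulus $\alpha$ on $I_k$) and $\Delta g=\beta(h\ci{J_k}+h\ci{I_k})$ with $\beta\sim n^{-1/p}$ (``flat'': constant modulus $\beta$). Now $|\Delta g|$ equals the $L^p$-average $(\E\ci I|\Delta f|^p)^{1/p}$, so $Sg$ has exactly the distribution of the left side of \eqref{p-norm-2}; at depth $m$ one gets $(Sg)^2=m\beta^2$, and on the set of depth $n$ this is $\asymp n^{1-2/p}\to\infty$ for $p>2$, while $Sf$ stays bounded. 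The contrast that drives the counterexample is ``spiky vs.\ flat'', not ``propagating vs.\ non-propagating''; note that in your three-child lattice no element of $D\ci{I_{k-1}}$ has constant modulus, which is why an extra split of $I_k$ is needed.
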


This proposition also demonstrates, that unlike the case $p=2$ the uniform boundedness  in $L^p$ of the blocks $T\ci I$ of a martingale transform $T$ does not imply the boundedness of $T$ in $L^p$, $p\ne 2$. 

The proof of the Proposition can be obtained by modifying the construction in Section \ref{s.c-ex:averaging}. Define $I_0 =[0,1)$. Fix $n\in\N$, $n>2$. Let split $I_0$ into two subintervals, $I_1$ and $J_1$, where $|J_1| = (1/n) |I_0|$, so $|I_1| =(1-1/n)|I_0|$, and let us split intervals $I_1$ $J_1$ into two equal subintervals, let us call them, $I_1^k$, $J_1^k$, $k=1, 2$. These four intervals will be children of $I_0$. 

For an interval $I$, let $h\ci I$ be the Haar function (normalized in $L^\infty$), 
\[
h\ci I = \1\ci{I_+} -\1\ci{I_-}, 
\]
where $I_+$ and $I_-$ are the right and left halves of $I$ respectively. 

Define the martingale differences in $D\ci{I_0}$, 
\[
\Delta\ci{I_0} f = h\ci{J_1} + \alpha  h\ci{I_1}, \qquad \Delta\ci{I_0} g = \beta (h\ci{J_1} + h\ci{I_1}), 
\]
where as in Section \ref{s.c-ex:averaging} $\alpha = (n-1)^{-1}$, and $\beta = n^{-p} \left(1+ (n-1)^{1-p}\right)^{1/p}$, so
\begin{equation}
\label{D-f_D-g}
\| \Delta\ci{I_0} f\|_p = \|  \Delta\ci{I_0} g \|_p. 
\end{equation}

We then apply the same construction to the ``children'' $I_1^k$ of $I_0$, then to all ``children'' $I_2^j$ of all $I_1^k$ and so on. Note, that we do not care about the ``children'' of the ``smaller'' intervals $J_r^k$, because we put the martingale differences to be zero for all intervals different from one of $I_k^j$. 

So, we get the collection of intervals $I_k^j$ and the corresponding martingale differences $\Delta\ci{I_k^j} f$, $\Delta\ci{I_k^j} g$, constructed the same way as in \eqref{D-f_D-g}. 

But now let us notice that this construction just models the construction from Section \ref{s.c-ex:averaging}. Namely, for every $k\in\N$, the total length of the intervals $I_k^j$ is exactly the length of the interval $I_k$ from Section \ref{s.c-ex:averaging}. It is easy to see that the function
$
\sum_j |\Delta\ci{I_k^j} f| 
$
and the function $|\Delta\ci{I_k} f|$ from Section \ref{s.c-ex:averaging} have the same distribution function. Moreover, the corresponding square functions
\[
\Bigl(\sum_{k,j} |\Delta\ci{I_k^j} f|^2 \Bigr)^{1/2} \qquad \text{and} 
\qquad 
\Bigl(\sum_{k} |\Delta\ci{I_k} f|^2 \Bigr)^{1/2} 
\]
also have the same distribution function.   

The distribution functions of the square function 
\[
\Bigl( \sum_{k,j} |\Delta\ci{I_k^j} g |^2 \Bigr)^{1/2} 
\]
and of the function 
\[
\Bigl( \sum_{k} \left(\E\ci{I_k}|\Delta\ci{I_k}  |^p \right)^{2/p} \Bigr)^{1/2}
\]
from Section \ref{s.c-ex:averaging} also coincide. Therefore, all estimate from Section \ref{s.c-ex:averaging} apply here, and repeating the reasoning from this section we prove the proposition. \hfill \qed

\subsection{``Paraproduct'' version of embedding theorem} Let $b= \{b^I \}\ci{I\in\cL}$ be a family of functions such that $b^I $ is supported on $I$ and is constant on ``children'' of $I$. Define a ``paraproduct type'' operator $\wt\pi = \wt\pi_b$ by
\[
\wt \pi_b f (\fdot, k) = \sum_{I\in\cL:\rk(I) =k} \La f\Ra\ci I b^I.
\]
If $b^I =\Delta\ci I b$ for some scalar function $b$, this is just the classical paraproduct, so that is where our operator came from. However, we do not assume here orthogonality of $b^I$ to constants, so $b$ here is just a collection of functions $b^I$.

We are interested when this operator is a bounded operator $L^p\to L^p(\ell^q)$ (or from $\wt\cH^p\to L^p(\ell^q)$, if we are interested in he case $p=1$).

If each $b^I$ is constant on $I$, the answer is given by Theorem \ref{t2.4} above, and it does not depend on $p\in [1, \infty)$. In the general case, if we do not assume that the lattice is homogeneous, the answer generally depends on $p$; one can easily come up with a counterexample in the simplest situation when only $b^I$ with $I$ in a disjoint family are non-zero.  

\begin{thm}
\label{t.para-embed}
Let $p\in[1,\infty)$, $q\in (1,\infty)$. 
The operator $\wt\pi_b$ defined above is a bounded operator $\wt\cH^p\to L^p(\ell^q)$  if and only if
\begin{equation}
\label{4.1}
\sup_{I\in\cL} \fint_I \Bigl(\sum_{J\in\cL:J\subset I} | b^J (x) |^q    \Bigr)^{\frac1q p} dx = K^p < \infty
\end{equation}
Moreover, the norm of $\wt\pi_b$ is estimated by $CK$, where $C=C(p)$.
\end{thm}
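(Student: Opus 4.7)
\textbf{Proof proposal for Theorem \ref{t.para-embed}.}

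Necessity is the easy direction: testing the operator on indicator functions $\mathbf{1}_{I_0}$ with $I_0 \in \cL$, and using that $\La \mathbf{1}_{I_0}\Ra\ci J = 1$ whenever $J \subset I_0$ while $\|\mathbf{1}_{I_0}\|_{\wt\cH^p} \asymp |I_0|^{1/p}$, the inequality $\|\wt\pi_b \mathbf{1}_{I_0}\|_{L^p(\ell^q)} \le \|\wt\pi_b\|\cdot \|\mathbf{1}_{I_0}\|_{\wt\cH^p}$ immediately produces the bound~\eqref{4.1} with $K \le C\|\wt\pi_b\|$.

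For sufficiency I would run a Calder\'on--Zygmund stopping-time argument driven by the maximal function $M_\cL f$, mimicking the scheme used in the proof of Theorem~\ref{t2.4}. Define the level sets $E_k = \{M_\cL f > 2^k\}$, the stopping families $\cE_k = \{I \in \cL : I \subset E_k\}$, and the layers $F_k = \cE_k \setminus \cE_{k+1}$. Decompose $\wt\pi_b f = \sum_k A_k$, where
\[
A_k(\,\cdot\,, j) := \sum_{I \in F_k,\, \rk(I) = j} \La f\Ra\ci I\, b^I.
\]
The key pointwise observation is that the $A_k(x,j)$ have disjoint supports in the $(x,j)$-index: for each $(x,j)$, the unique rank-$j$ interval $I$ containing $x$ lies in exactly one $F_k$. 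Hence
\[
\|\wt\pi_b f(x,\,\cdot\,)\|_{\ell^q}^q = \sum_k \|A_k(x,\,\cdot\,)\|_{\ell^q}^q.
\]
On each layer, $I \in F_k$ forces $|\La f\Ra\ci I| \le 2^{k+1}$ (since $I \not\subset E_{k+1}$), while decomposing $E_k$ into its maximal intervals $\{I_k^\alpha\} \subset \cE_k$ and applying the hypothesis \eqref{4.1} to each $I_k^\alpha$ yields the layerwise estimate
\[
\|A_k\|_{L^p(\ell^q)}^p \le C\, 2^{kp} K^p \,|E_k|.
\]

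For $p \le q$, the proof ends cleanly via sub-additivity of $t \mapsto t^{p/q}$:
\[
\|\wt\pi_b f\|_{L^p(\ell^q)}^p \le \sum_k \|A_k\|_{L^p(\ell^q)}^p \le CK^p \sum_k 2^{kp}|E_k| \lesssim K^p \|M_\cL f\|_p^p \lesssim K^p \|f\|_{\wt\cH^p}^p,
\]
where the last step uses $\|M_\cL f\|_p \asymp \|f\|_{\wt\cH^p}$ (Burkholder for $p\in(1,\infty)$ and Burgess--Davis for $p=1$).

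The hard part will be the case $p > q$, where $(\sum)^{p/q}$ is no longer sub-additive and the layerwise estimates cannot simply be summed. Here I would adapt the duality-plus-$A_1$-majorization trick used for $p>q$ in Theorem~\ref{t2.4}: by the factorization $\|F\|_{L^p(\ell^q)} = \sup_{\|g\|_r \le 1} \|gF\|_{L^q(\ell^q)}$ with $1/p + 1/r = 1/q$, one reduces to bounding $\|g\wt\pi_b f\|_{L^q(\ell^q)}$; replacing $f$ and $g$ by their $A_1$-majorants from Lemma~\ref{l2.5} (so that $M_\cL f \lesssim f$ and $M_\cL(g^q)\lesssim g^q$), one then tries to compare this quantity to $\|\wt\pi_b(fg)\|_{L^q(\ell^q)}$, which is controlled by $CK\|fg\|_q \le CK \|f\|_p \|g\|_r$ by the already-settled $p=q$ case. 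The subtlety, not present in the Carleson embedding of Theorem~\ref{t2.4}, is that $b^I$ is a \emph{function} on $I$ rather than a scalar, so the pointwise comparison $\La f\Ra\ci I^q \, g^q(x) \lesssim \La fg\Ra\ci I^q$ must be carried out inside the integral $\int_I |b^I|^q g^q\,dx$, and bridging this gap is where the main technical work lies.
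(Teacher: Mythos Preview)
Your treatment of necessity and of the case $p \le q$ matches the paper's argument exactly.

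For $p > q$, however, the $A_1$-majorization/duality scheme you propose does not go through, and the paper uses an entirely different argument. The obstruction is precisely the one you flag in your last sentence, but it is not a technicality to be ``bridged'': the comparison $\|g\,\wt\pi_b f\|_{L^q(\ell^q)} \lesssim \|\wt\pi_b(fg)\|_{L^q(\ell^q)}$ is simply false in the non-homogeneous setting when $b^I$ is a genuine function. Writing both sides out term by term, you would need
\[
\La f\Ra_I^q \int_I |b^I|^q\, g^q \,dx \ \lesssim\ \La fg\Ra_I^q \int_I |b^I|^q \,dx
\]
uniformly in $I$. But the $A_1$ condition on $g^q$ only bounds $g^q$ \emph{below} by its average on $I$; it gives no upper bound. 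If $b^I$ is supported on a child of $I$ of relative size $\epsilon$, and $g^q$ behaves like $x^{-\alpha}$ near that child (an $A_1$ weight with constant independent of $\epsilon$), the left side carries an extra factor $\epsilon^{-\alpha}$ and the inequality blows up as $\epsilon\to 0$. This reflects the remark made just before the theorem: unlike in Theorem~\ref{t2.4}, condition \eqref{4.1} genuinely depends on $p$, so one cannot reduce to the $p=q$ case by a device that is blind to the internal structure of $b^I$ on $I$.

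The paper's proof for $p>q$ instead runs a stopping-time construction driven by $f$ alone. One builds nested generations $\cG_k^*$ of stopping intervals in which $\La f\Ra$ roughly quadruples from one generation to the next, so that the measure $\sum_k \sum_{J\in\cG_k^*} \mathbf{1}_J\,dx$ is Carleson. One then pairs $\|(\wt\pi_b f)(x,\fdot)\|_{\ell^q}$ against a scalar test function $g\in L^{p'}$ and, for each stopping interval $J$, splits $\int_J$ into the part over $J\setminus G_{k+1}$ (these pieces are disjoint and are handled by H\"older together with \eqref{4.1} at the given exponent $p$) and the part over $J\cap G_{k+1}$ (there $(\wt\pi_b f)\ci{\cG(J)}$ is constant on the next-generation stopping intervals, so $g$ may be replaced by its averages, which are controlled via the Carleson embedding theorem). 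The key point is that this argument uses \eqref{4.1} only at the exponent $p$ and never attempts the pointwise comparison with $b^I$ that sinks your approach.
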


\begin{proof}[Proof: necessity and the easy case $p\le q$]  
The necessity of the condition \eqref{4.1} is trivial, one just needs to test the boundedness of $\wt \pi_b$ on characteristic function $\1\ci I$, $I\in\cL$ an when computing the $L^p(\ell^q)$-norm only count $\La f\Ra\ci J b^J$ corresponding to $J\subset I$.

To prove the sufficiency, let us first fix the notation. As in the proof of Theorem \eqref{t2.4} let $E_k:= \{x\in\X: M\ci\cL f(x) > 2^k \}$ and let $\cE_k:= \{I\in\cL: I\subset E_k\}$.

Let $(\wt \pi_b f)\ci{\cE_k\setminus \cE_{k+1}}$ be the coordinate projection of $\wt \pi_b f$ corresponding to the set $\cE_k\setminus \cE_{k+1}$ (recall, that by \eqref{coord.proj.1} the coordinate projection can be defined for arbitrary function in $L^p(\ell^q)$, not just for elements of $\gb_p^q$).

The sufficiency for $p\le q$ is proved absolutely the same way as in Theorem \ref{t2.4}: using absolutely the same reasoning as there, we get the analogue of \eqref{2.6}, namely that
\[
\sum_{k\in\Z} \left\| (\wt\pi_b f)\ci{\cE_k\setminus \cE_{k+1}} \right\|_{L^p(\ell^q)}^p \le C\|f\|_p^p ,
\]
(this inequality holds for all $p, q \in (1, \infty)$ with $C=C(p, q)$).

This immediately gives the desired estimate if $p\le q$ the same way it was done in the proof of Theorem \ref{t2.4}: we really did not use the fact that $\alpha\ci I$ were constants there, the estimate works for arbitrary functions.
\end{proof}

\subsubsection{Stopping moments and the hard estimate in Theorem \ref{t.para-embed}} To treat the estimate in the situation when $p>q$ we employ the stopping moment technique. 

First of all let us note that if $p>q>1$, then $p>1$, so the space $\wt\cH^p$ is isomorphic to $L^p$. That means that without loss of generality we can assume $f\ge0$, which we will do in what follows (note, that we cannot do that for $\wt\cH^1$). 

So, let us assume $f\ge 0$, $f\in L^p$. 
Fix some $k_0\in Z$ (we later let $k_0\to -\infty$) and define the first generation $\cG^*_1$ of stopping intervals to be the collection of maximal (by inclusion) intervals 
$I\in \cE_{k_0}$  (the sets $\cE_k$ and $E_k$ were  defined above in the beginning of the proof of Theorem \ref{t.para-embed}).

We then construct the generations $\cG^*_k$ of stopping intervals by induction, by taking for each $J\in \cG^*_k$ some disjoint subintervals $I\in \cL$, $I\subset J$ to get $\cG_{k+1}^*$.     Namely, suppose we have generation $\cG_k^*$ of stopping moments.

Let
\[
G_k = \bigcup_{I\in\cG_k^*} I,
\]
and denote $\cG_k = \{ I\in\cL: I\subset G_k\}$ (so $\cG_k^*$ is the collection of all maximal intervals in $\cG_k$).
Also, for $J\in\cL$ let
\[
\rf(J):=\max\{ k\in \Z : J\subset E_k\} = \max\{k\in\Z: J \in\cE_k\}.
\]

For an  interval $J\in \cG_k^*$ we consider all maximal subintervals $I\in\cL$, $I\subset J\cap E_{r+2}$, where $r= \rf(J)$. The collection of such intervals constructed for all $J\in\cG^*_k$ is the generation $\cG_{k+1}^*$ of stopping moments.

It is easy to establish the following properties of stopping moments.

\begin{lm}
For any $J\in\cG_k^*$

\begin{enumerate}
\item 
$\La f\Ra\ci J \le 2^{\rf(J) + 1}$;

\item $\La f\Ra\ci J \ge 2^{\rf(J) }$;

\item 
For any $I\in \cG_k\setminus \cG_{k+1}$, $I \subset J$ the estimate $\La f\Ra\ci I \le 2^{\rf(J) + 2}$ holds;

\item 
Finally
\[
| J\cap G_{k+1} | \le \frac12 |J|
\]
\end{enumerate}
\end{lm}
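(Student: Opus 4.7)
The plan is to organize the argument around a single auxiliary estimate which I shall call the \emph{ancestor bound}: if $J\in\cG_k^*$ and $r:=\rf(J)$, then $\La f\Ra\ci{\tilde J}\le 2^{r}$ for every strict $\cL$-ancestor $\tilde J\supsetneqq J$. Property (i) is essentially immediate from the definition of $\rf(J)$: since $J\not\subset E_{r+1}$, there is a point $y\in J$ with $M_\cL f(y)\le 2^{r+1}$, and because $J\in\cL$ is one of the intervals through $y$, this gives $\La f\Ra\ci J\le 2^{r+1}$.

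The ancestor bound I would prove by induction on the generation $k$. For $k=1$, maximality of $J$ in $\cE_{k_0}$ forces each strict $\cL$-ancestor $\tilde J$ out of $\cE_{k_0}$, so some $y\in\tilde J$ satisfies $M_\cL f(y)\le 2^{k_0}\le 2^r$, yielding $\La f\Ra\ci{\tilde J}\le 2^{k_0}\le 2^r$. For $k\ge 2$, let $J_0\in\cG_{k-1}^*$ be the stopping-parent of $J$ and $r_0:=\rf(J_0)$; by construction $r\ge r_0+2$. The maximality of $J$ as a subinterval of $J_0\cap E_{r_0+2}$ forces every $\tilde J$ with $J\subsetneqq\tilde J\subset J_0$ out of $E_{r_0+2}$, giving $\La f\Ra\ci{\tilde J}\le 2^{r_0+2}\le 2^r$. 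For ancestors $\tilde J\supseteq J_0$, property (i) applied to $J_0$ together with the lattice property (any $y\in J_0$ lies in $\tilde J$) gives $\La f\Ra\ci{\tilde J}\le 2^{r_0+1}<2^r$. Granting this, (ii) follows by a Calder\'{o}n--Zygmund-style covering: for every $x\in J$ the strict inequality $M_\cL f(x)>2^r$ cannot be realized on any strict ancestor of $J$, so it is realized by some $I\in\cL$ with $x\in I\subset J$; taking the pairwise disjoint collection $\mathcal M$ of maximal such $I$, they cover $J$, and
\[
\int_J f=\sum_{I\in\mathcal M}\int_I f>2^r\sum_{I\in\mathcal M}|I|=2^r|J|.
\]

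Property (iii) falls out directly from the stopping construction. If $I\in\cG_k$ with $I\subset J$ satisfied $I\subset E_{r+2}$, then the largest $\cL$-ancestor $I^*$ of $I$ still contained in $J\cap E_{r+2}$ would be a maximal subinterval of $J\cap E_{r+2}$ (note $I^*\ne J$ because $J\not\subset E_{r+2}$), hence $I^*\in\cG_{k+1}^*$; but then $I\subset I^*\subset G_{k+1}$ would force $I\in\cG_{k+1}$, contradicting $I\in\cG_k\setminus\cG_{k+1}$. So $I\not\subset E_{r+2}$, producing $y\in I$ with $M_\cL f(y)\le 2^{r+2}$ and hence $\La f\Ra\ci I\le 2^{r+2}$. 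Finally, for (iv), the ancestor bound shows that on $J$ the sets $\{M_\cL f>2^{r+2}\}$ and $\{M_Jf>2^{r+2}\}$ coincide, where $M_Jf(x):=\sup\{\La f\Ra\ci I:x\in I\subset J,\ I\in\cL\}$; combining the standard weak $(1,1)$ estimate for $M_J$ with (i) yields
\[
|J\cap G_{k+1}|\le|J\cap E_{r+2}|\le 2^{-(r+2)}\int_J f\le 2^{-(r+2)}\cdot 2^{r+1}|J|=\tfrac12|J|.
\]
The main obstacle is the inductive step of the ancestor bound: it is the one place where one must simultaneously exploit the maximality from the stopping construction and the built-in gap $r\ge r_0+2$, handling separately the ancestors lying inside $J_0$ and those above it. Once the ancestor bound is available, statements (ii)--(iv) reduce to routine computations.
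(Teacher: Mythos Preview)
Your proof is correct and follows essentially the same approach as the paper. The only difference is organizational: where you isolate an explicit ``ancestor bound'' and prove it by induction on $k$, the paper instead observes directly that each $J\in\cG_k^*$ is (by construction) a maximal $\cL$-subinterval of some $E_j$, and hence of $E_{\rf(J)}\subset E_j$---this yields your ancestor bound in one step and then (ii) and (iv) by the same averaging computations you give.
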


\begin{proof}
Property \cond1 holds because by the definition of $\rf(J)$, we have $J\not\subset E_{\rf(J)+1}$. Property \cond2 holds because by the construction $J$ is a maximal subinterval of some $E_j$, and therefore it is a maximal subinterval of $E_{\rf(J)}$ (recall that $\rf(J)\ge j$). But since every $E_r$ can be represented as $E_r = \cup\{J\in\cL: \La f \Ra\ci J > 2^r \}$ and $f\ge0$, the inequality
\begin{equation}
\label{max-Er}
\La f\Ra\ci J \ge 2^r
\end{equation}
 holds for all \emph{maximal} subintervals $J$ of $E_r$.   
Here we used the trivial fact that for a non-negative function the average over a union of sets is at least the minimum of averages over each set. 

Property \cond3 holds because  by the construction of $\cG^*_{k+1}$ for any such $I$ we have $I\not\subset E_{\rf(J)+2}$. 

Finally,  any    $I\in \cG_{k+1}^*$, $I \subset J$  is, by the construction, a maximal subinterval of
$E_{\rf(J)+2}$, so by \eqref{max-Er} with $\rf(J)+2$ instead of $r$, $\La f\Ra\ci I \ge 2^{\rf(J)+2}$.   Since $J\cap G_{k+1}$ is a disjoint union of such intervals $I$, we get using condition \cond1
\[
2^{\rf(J)+1} |J| \ge \int_J f dx \ge \int_{J\cap G_{k+1}} f dx \ge 2^{\rf(J)+2} | J\cap G_{k+1} |
\]
(we used the above estimate $\La f\Ra\ci I \ge 2^{r+2}$ for the last inequality), which gives us property \cond4.
\end{proof}

We are now ready to prove the estimate. For an interval $J\in\cG_k^*$ let
\[
\cG(J):= \{ I\in\cL: I\subset J, I\notin \cG_{k+1}\}
\]
(which is exactly the collection of intervals $I$ from the property \cond3 of generations $\cG^*_k$). Then
\[
(\wt\pi_b f)\ci{\cE_{k_0}} = (\wt\pi_b f)\ci{\cG_{1}} =
\sum_{k=1}^\infty \sum_{J\in \cG^*_k} (\wt\pi_b f)\ci{\cG(J)} ,
\]
and each $(\wt\pi_b f)\ci{\cG(J)}$ is supported on $J$.
So, taking $g\in L^{p'}$, $\|g\|_{p'}\le 1$ we can estimate
\begin{equation}
\label{pbf-g}
\int_{E_{k_0}} \| (\wt\pi_b f)\ci{\cE_{k_0}} (x, \fdot) \|\ci{\ell^q} |g(x)| dx \le
\sum_{k=1}^\infty \sum_{J\in \cG^*_k} \int_{J} \| (\wt\pi_b f)\ci{\cG(J)} (x, \fdot) \|\ci{\ell^q} |g(x)| dx
\end{equation}
Each integral in the sum (over $J\in \cG^*_k$) can be split
\[
\int_J \ldots = \int_{J\setminus G_{k+1}} \ldots + \int_{J\cap G_{k+1}} \ldots = A(J) + B(J).
\]

Let us estimate $A(J)$. For $J\in \cG^*_k$ let us denote $\wt J : = J \setminus G_{k+1}$. Note that the sets $\wt J$ are disjoint and $\cup_{k\ge1} \cup_{J\in \cG^*_k} \wt J = E_{k_0}$.

By the property \cond3 of generations $\cG^*_k$, the inequality $\La f\Ra\ci I\le 2^{\rf(J)+2}$ holds for $I\in\cG(J)$. Together with \eqref{4.1} this gives us the estimate
\begin{equation}
\label{4.2a}
\|(\wt \pi_b f)\ci{\cG(I)} \|\ci{L^p(\ell^q)} \le 2^{\rf(J)+2} |J|^{1/p},
\end{equation}
so
\begin{equation}
\label{4.3}
A(J) \le 4\cdot 2^{\rf(J)} |J|^{1/p} \| g \1\ci{\wt J}\|_{p'} \le
2^{1/p} 4\cdot 2^{\rf(J)} |\wt J|^{1/p} \| g \1\ci{\wt J}\|_{p'} ;
\end{equation}
the last inequality holds because by property \cond4 of generations $|J| \le 2 |\wt J |$ (recall that $\wt J = J\setminus G_{k+1}$).

We know that by definition of $\rf(J)$
\[
2^{\rf(J)} \le \min_{x\in J} M\ci\cL f(x),
\]
so using this inequality and disjointness of $\wt J$s we get
\begin{equation}
\label{4.5}
\sum_{k=1}^\infty \sum_{J\in \cG^*_k} 2^{\rf(J)\cdot  p} |J| \le 2\sum_{k=1}^\infty \sum_{J\in \cG^*_k} 2^{\rf(J)\cdot  p} |\wt J| \le 2  \int_\X [M\ci\cL f (x)]^p dx \le C \|f\|_p^p
\end{equation}
Again, since $\wt J$s are disjoint,
\[
\sum_{k=1}^\infty \sum_{J\in \cG^*_k}   \| g \1\ci{\wt J}\|_{p'}^{p'} \le \|g\|_{p'}^{p'}  .            
\]
So, applying H\"{o}lder inequality to \eqref{4.3} we get
\[
\sum_{k=1}^\infty \sum_{J\in \cG^*_k}  A(J) \le C \|f\|_p \|g\|_{p'} .
\]

To estimate the sum of $B(J)$s, let us notice that for $J\in \cG_k^*$ the function $(\wt \pi_b f)\ci{\cG(J)}$ is constant on intervals $I\in \cG^*_{k+1}$, $I\subset J$, so the integral $B(J)$ does not change if we replace $g$ there by the function $g\ci J$,
\[
g\ci J := \sum_{I\in \cG^*_{k+1}, I\subset J} \La g \Ra\ci I \1\ci I .
\]
Since
\[
\|g\ci J\|_{p'}^{p'} = \sum_{I\in \cG^*_{k+1}, I\subset J} | \La g \Ra\ci I |^{p'} |I|
\]
we can estimate using \eqref{4.2a}
\begin{equation}
\label{est-BJ}
B(J) \le 2^{\rf(J)+2} |J|^{1/p} \left( \sum_{I\in \cG^*_{k+1}, I\subset J} | \La g \Ra\ci I |^{p'} |I| \right)^{1/p'}
\end{equation}

The measure $\sum_{k=1}^\infty \sum_{J\in\cG^*_k} \1\ci J dx$ is clearly Carleson, i.e.~for any $I\in\cL$
\begin{equation}
\label{L-Carl-meas}
\sum_{k=1}^\infty \sum_{J\in\cG^*_k:J\subset I} \le C |I|.
\end{equation}
Indeed, since interval in each generation $\cG^*_k$ are disjoint, the desired inequality holds trivially if we consider only one generation in the sum, for example the generation $\cG^*_k$ with the smallest possible $k$, still intersecting $I$. By property \cond4 of generations, the contribution of each next generation is at most half of the previous, so summing geometric series we get \eqref{L-Carl-meas}.  

Therefore, by the Carleson Embedding theorem (cf.~Theorem \ref{t2.4} for $p=q$) we get that
\[
\sum_{k=1}^\infty \sum_{I\in \cG^*_{k+1}} | \La g \Ra\ci I |^{p'} |I| \le C \|g\|_{p'}^{p'}
\]
Therefore, summing \eqref{est-BJ} over all $J$, then applying H\"{o}lder inequality   and using \eqref{4.5}, we get
\[
\sum_{k=1}^\infty \sum_{J\in \cG^*_{k}}  B(J) \le C\|f\|_p \|g\|_{p'}.  
\]
Combining this with the estimate for the sum of $A(J)$s, we get from \eqref{pbf-g}
\[
\int_{E_{k_0}} \| (\wt\pi_b f)\ci{\cE_{k_0}} (x, \fdot) \|\ci{\ell^q} |g(x)| dx \le C\|f\|_p \|g\|_{p'}.
\]
Letting $k_0\to -\infty$
 concludes the proof. \hfill\qed

\subsection{Bounds for paraproducts }
We will need the following simple lemma. 
\begin{lm}
\label{Haar-norm}
Let $I$ be a disjoint union of sets $I_1$ and $I_2$, and let $h$ be a ``Haar function'', i.e. $h=\alpha_1\1\ci{I_1} + \alpha_2\1\ci{I_2}$ and  $\int_\X h dx =0$. Then, assuming without loss of generality that $|I_1|\le|I_2|$ we get that for $p\in[1, \infty)$
\[
\|h\|_p^p \le 2 \|h\1\ci{I_1}\|_p^p
\]
and that the inverse H\"{o}lder inequality holds
\[
\| h\|_p \|h\|_{p'} \le 2 \| h\|_2^2, \qquad 1/p+1/p'=1.
\]
\end{lm}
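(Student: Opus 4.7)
The plan is to reduce everything to an explicit computation using the zero-mean condition. Since $\int_\X h \, dx = \alpha_1 |I_1| + \alpha_2 |I_2| = 0$, we have $\alpha_2 = -\alpha_1 |I_1|/|I_2|$, and the assumption $|I_1| \le |I_2|$ then gives $|\alpha_2| \le |\alpha_1|$. This is the one inequality driving both statements: the piece with smaller measure carries the larger coefficient.

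For the first inequality, the key observation is that
\[
|\alpha_2|^p |I_2| = |\alpha_1|^p \frac{|I_1|^p}{|I_2|^{p-1}} = |\alpha_1|^p |I_1| \cdot \Bigl( \frac{|I_1|}{|I_2|} \Bigr)^{p-1} \le |\alpha_1|^p |I_1|,
\]
since $|I_1|/|I_2| \le 1$ and $p \ge 1$. Adding $|\alpha_1|^p |I_1|$ to both sides yields $\|h\|_p^p \le 2 |\alpha_1|^p |I_1| = 2\|h\1\ci{I_1}\|_p^p$.

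For the reverse Hölder inequality, I apply the bound just obtained at both $p$ and $p'$:
\[
\|h\|_p \le 2^{1/p} |\alpha_1| |I_1|^{1/p}, \qquad \|h\|_{p'} \le 2^{1/p'} |\alpha_1| |I_1|^{1/p'}.
\]
Multiplying gives $\|h\|_p \|h\|_{p'} \le 2 |\alpha_1|^2 |I_1|$. On the other hand $\|h\|_2^2 = |\alpha_1|^2 |I_1| + |\alpha_2|^2 |I_2| \ge |\alpha_1|^2 |I_1|$, which closes the estimate.

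There is no real obstacle here — both statements follow directly from the explicit two-value structure of $h$, the zero-mean constraint, and the elementary bound $(|I_1|/|I_2|)^{p-1} \le 1$. The only thing to be careful about is handling $p=1$ (where $p'=\infty$) in the reverse Hölder statement, but the same computation goes through with $\|h\|_\infty = |\alpha_1|$ and $\|h\|_1 = 2|\alpha_1||I_1|$, giving $\|h\|_1 \|h\|_\infty = 2|\alpha_1|^2 |I_1| \le 2\|h\|_2^2$.
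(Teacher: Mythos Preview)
Your proof is correct and follows essentially the same route as the paper: use the zero-mean relation $|\alpha_1||I_1|=|\alpha_2||I_2|$ together with $|I_1|\le|I_2|$ to get $|\alpha_2|\le|\alpha_1|$, deduce $|\alpha_2|^p|I_2|\le|\alpha_1|^p|I_1|$ for the first inequality, and then multiply the $p$ and $p'$ bounds and compare with $\|h\|_2^2\ge|\alpha_1|^2|I_1|$ for the second. Your explicit treatment of the endpoint $p=1$ is a nice addition.
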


\begin{proof}[Proof of Lemma \ref{Haar-norm}]
Assume without loss of generality that $|I_1|\le |I_2|$. The condition $\int_\X h dx =0$ means that
\[
\alpha_1 |I_1| = - \alpha_2 |I_2|,  
\]
which immediately implies $|\alpha_2|\ge |\alpha_1|$. Then
\[
\int_\X |h|^p dx = |\alpha_1|^p |I_1| + |\alpha_2|^p |I_2| = |\alpha_1|^p |I_1| + |\alpha_2|^{p-1} |\alpha_1| |I_1|\le 2 |\alpha_1|^p |I_1|
\]
so
\[
\| h\|_p^p \le 2 \|h\1\ci{I}\|_p^p
\]
and similarly for $p'$.

Since for constant functions H\"{o}lder inequality becomes identity, using the above estimate we can write
\[
\| h\|_p \| h\|_{p'} \le 2 \| h\1\ci{I}\|_p \| h\1\ci{I}\|_{p'} = 2 \| h\1\ci{I}\|_2^2 \le 2 \| h\|_2^2
\]
\end{proof}

\begin{thm}
\label{t.para-bound}
Let $b=\{\Delta\ci I b\}\ci{I\in\cL}$ be a martingale difference sequence, and let $p\in [1, \infty)$, $q\in(1, \infty)$. Then
\begin{enumerate}
\item The paraproduct $\pi_b$ is a bounded operator 
from $\wt\cH^p$ to $\cH^p_q$ if and only if
\begin{equation}
\label{test1}
\sup_{I\in\cL} \fint_I \Bigl(\sum_{J\in\cL:J\subset I} | \Delta\ci J b (x) |^q    \Bigr)^{\frac1q p} dx = :K^p < \infty.
\end{equation}
Moreover
\[
K \le \|\pi_b\|\ci{\wt\cH^p\to\cH^p_q} \le CK,
\]
where $C=C(p,q)$.   
\item The paraproduct $\pi^{(*)}_b
 $ is a bounded operator 
 in $\wt\cH^p=\wt\cH^p_2$ 
 if and only if $b\in \BMOs = \BMOs_{2}$. Moreover
\[
\frac1C \|b\|\ci{\BMOs} \le \|\pi_b^{(*)}\|\ci{\wt\cH^p\to \wt\cH^p} \le C \|b\|\ci{\BMOs}
\]
where $C=C(p)$.   
\end{enumerate}
\end{thm}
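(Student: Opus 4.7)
Part (i) is essentially a corollary of Theorem \ref{t.para-embed}. A short case analysis shows that for $J\ne I$ the product $(\Delta\ci I b)(\E\ci I f)$ is either constant on $J$ (when $J\subsetneq I$) or supported in a single child of $J$ with mean zero there (when $J\supsetneq I$), so $\Delta\ci J[(\Delta\ci I b)(\E\ci I f)]=0$ in both cases; hence $\Delta\ci I(\pi_b f) = (\Delta\ci I b)(\E\ci I f)$, and
\[
\|\pi_b f\|\ci{\cH^p_q} = \Bigl\|\Bigl(\sum_{I\in\cL}|\Delta\ci I b|^q|\E\ci I f|^q\Bigr)^{1/q}\Bigr\|_p = \|\wt\pi_b f\|\ci{L^p(\ell^q)},
\]
where $\wt\pi_b$ is the operator of Theorem \ref{t.para-embed} with $b^I := \Delta\ci I b$. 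Condition \eqref{4.1} becomes \eqref{test1}, so both halves of (i) transfer directly.

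For the sufficiency half of (ii) I would use the decomposition $\pi_b^{(*)} = \pi_b^* + \Lambda_b^1$ from \eqref{pi(*)}. The operator $\pi_b^*$ is bounded on $L^p\approx \wt\cH^p$ by duality from part (i) with $q=2$, since $b\in\BMOs_2$ is exactly condition \eqref{test1} with $q=2$ by equivalence of $\gb^{2,(r)}_\infty$-norms (Theorem \ref{t2.3}). For $\Lambda_b^1$, a case analysis (exactly as above) gives $\Delta\ci J(\Lambda_b^1 f) = \Delta\ci J h_J$ with $h_J := (\Delta\ci J b)(\Delta\ci J f)$, so by the Burkholder square-function equivalence $\|\Lambda_b^1 f\|_p \sim \|(\sum_J|\Delta\ci J h_J|^2)^{1/2}\|_p$. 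Splitting $|\Delta\ci J h_J|^2 \le 2|h_J|^2 + 2|\E\ci J h_J|^2$, the first sum is at most $\|b\|\ci\BMOs^2 (Sf)^2$ pointwise by the uniform bound \eqref{3.1.a} on $\|\Delta\ci J b\|_\infty$; the second is controlled via $|\E\ci J h_J|\1\ci J \le \|b\|\ci\BMOs\,M_\cL|\Delta\ci J f|$ together with the Fefferman--Stein vector-valued maximal inequality (Theorem \ref{tFS}).

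The necessity in (ii) requires recovering the two parts of the BMO condition separately. For the averaged condition \eqref{3.1}, fix $I_0\in\cL$ and test on the truncation $f := \sum_{I\subset I_0,\,\rk(I)\le N}\Delta\ci I b$; only terms with $J\subset I_0$ survive and yield $\pi_b^{(*)} f = F^2$, where $F := (\sum_{I\subset I_0,\,\rk(I)\le N}|\Delta\ci I b|^2)^{1/2}$. Therefore $\|F\|_{2p}^2 = \|\pi_b^{(*)} f\|_p \le C\|\pi_b^{(*)}\|\,\|F\|_p$, and the reverse H\"older inequality $\|F\|_{2p} \ge \|F\|_p|I_0|^{-1/(2p)}$ (valid because $\supp F\subset I_0$) collapses this to $\fint_{I_0}F^p\,dx \le C^p\|\pi_b^{(*)}\|^p$, which is \eqref{3.1} with $r=p$; Theorem \ref{t2.3} and monotone convergence in $N$ conclude. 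For the $L^\infty$ condition \eqref{3.1.a}, test on the Haar-type function $f = \1\ci K - (|K|/|I\setminus K|)\1\ci{I\setminus K}$ for each child $K$ of each $I\in\cL$: only the term $J=I$ contributes, so $\pi_b^{(*)} f = (\Delta\ci I b)f$, and comparing $L^p$-norms (the dominant contribution coming from $K$ when $|K|$ is small) forces $|(\Delta\ci I b)|_K|\le C\|\pi_b^{(*)}\|$.

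I expect the main obstacle to be the $\Lambda_b^1$ bound in the sufficiency argument: this operator vanishes on the standard dyadic lattice, so the classical case has nothing to estimate here, and in the non-homogeneous setting its ``diagonal'' averaged piece $\sum_J|\E\ci J h_J|^2\1\ci J$ is what requires combining the pointwise estimate \eqref{3.1.a} with the Fefferman--Stein inequality in order to get a bound uniform in $p$.
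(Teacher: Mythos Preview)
Your argument for part \cond1 and for the sufficiency half of part \cond2 follows the paper's proof essentially line for line: reduction to Theorem~\ref{t.para-embed} for \cond1, and the decomposition $\pi_b^{(*)}=\pi_b^*+\Lambda_b^1$ together with duality and the Fefferman--Stein inequality for \cond2.

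Your necessity argument in \cond2, however, takes a genuinely different and more direct route. The paper first passes to the adjoint $(\pi_b^{(*)})^*=\pi_b+\Lambda_b^1$, tests on $\1\ci I$ to obtain \eqref{3.1}, then uses part \cond1 to isolate $\Lambda_b^1$ as a bounded operator, and finally tests $\Lambda_b^1$ on a Haar function with a somewhat involved computation (splitting off $\E\ci I[(\Delta\ci I b)h]$ and estimating it separately). You instead test $\pi_b^{(*)}$ itself: on $f=\sum_{I\subset I_0}\overline{\Delta\ci I b}$ (the conjugate is needed for complex $b$, so that $\pi_b^{(*)}f=F^2$ rather than $\sum(\Delta\ci I b)^2$) to get the averaged condition via the self-improving bootstrap $\|F\|_{2p}^2\le C\|F\|_p$, and on the Haar function $f\in D\ci I$ to read off the $L^\infty$ bound directly from $\pi_b^{(*)}f=(\Delta\ci I b)f$. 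This avoids the detour through $\Lambda_b^1$ entirely; what the paper's approach buys is that it never needs the bootstrap inequality, relying only on the already-proved part \cond1.

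Two small points to make explicit when you write it up. First, in the $L^\infty$ step you should state the case $|K|\ge|I|/2$ separately: there Lemma~\ref{Haar-norm} does not give $\|f\|_p\sim|K|^{1/p}$, but the bound on $|(\Delta\ci I b)|_K|$ follows immediately from the $L^p$ estimate $\|\Delta\ci I b\|_p^p\le C|I|\le 2C|K|$ already obtained from \eqref{3.1}. Second, your ``comparing $L^p$-norms'' in that step silently uses $\|g\|_p\le C_p\|g\|_{\wt\cH^p}$; for $p>1$ this is the Burkholder equivalence, and for $p=1$ it is $\|g\|_1\le\|Mg\|_1\sim\|g\|_{\wt H^1}$, so the argument does go through for $p=1$ as well.
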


\begin{rem}
For $q=2$ statement \cond1 of the theorem  describes the boundedness of the paraproduct $\pi_b$ in 
$\wt \cH^p$ (or equivalently, in $\wt H^p$). For $p\in (1, \infty)$ this is equivalent to the boundedness of $\pi_b$ in $L^p$. 

Note, that unlike the condition $b\in \BMOs$, which is necessary and sufficient for the boundedness of $\pi^{(*)}_b$ in all $\wt\cH^p$, the above condition \eqref{test1} (for $q=2$, for example) does depend on $p$.
\end{rem}

\begin{rem}
Note that the condition \eqref{test1} 
(for $p=q=2$) is weaker then the condition $b\in \BMO$. Since by Proposition \ref{p1.2}
\[
M_b = \pi_b + \pi^*_b +\Lambda_b + R_b,
\]
and $\lambda_b$, $R_b$ commute with all martingale multipliers, the above theorem implies, in particular, that unlike the homogeneous case, it is impossible in general to characterize $b\in\BMO$ via boundedness of the commutators of $M_b$ with \emph{martingale multipliers}.
\end{rem}

\begin{proof}[Proof of Theorem \ref{t.para-bound}]
The statement \cond1 is easy. The ``only if'' part and the estimate $K\le \|\pi_b\|$ follow from testing the boundedness of $\pi_b$ on   functions $\1\ci I$, $I\in \cL$.   The ``if'' part with the estimate $\|\pi_b\|\le CK$ follow from Theorem \ref{t.para-embed} above.

Let us prove statement \cond2.
Notice that by Proposition \ref{p1.2}
\[
\pi_b^{(*)} = \pi_b^* + \Lambda_b^1.
\]
If $b\in\BMOs$, we know that for any $p'\in(1, \infty)$
\[
\sup_{I\in\cL} \fint_I \Bigl(\sum_{J\in\cL:J\subset I} | \Delta\ci J b (x) |^{2}    \Bigr)^{\frac1{2}{ p'} } dx \le C \|b\|_{\BMOs}^{p'}.
\]
Taking $p'$ to be the dual exponent to $p$, $1/p+1p'=1$, we get that
 by \cond1 $\pi_b$ is bounded in $\wt\cH^{p'}$, so  by duality $\pi_b^*$ is bounded in $\wt\cH^p$.

Since by Proposition \ref{p1.2}
\begin{equation}
\label{La_b1-1}
\Lambda_b^1 f = \sum_{I\in\cL} \Delta\ci I \bigl[ (\Delta\ci I b)(\Delta\ci I f ) \bigr]
=  \sum_{I\in\cL}   (\Delta\ci I b)(\Delta\ci I f ) - \sum_{I\in\cL} \E\ci I( (\Delta\ci I b)(\Delta\ci I f ) )  
\end{equation}
and by the definition of $\BMOs$ we have $\|\Delta\ci I b\|_\infty \le \|b\|_{\BMOs}$, we can conclude that $\Lambda_b^1$ is bounded in $\wt\cH^p$. Indeed, since $\|\Delta\ci I b\|_\infty \le \|b\|_{\BMOs}$
\[
\int_\X \sum_{I\in\cL} \bigl| (\Delta\ci I b)(\Delta\ci I f ) \bigr|^p dx \le
\|b\|_{\BMOs}^p \int_\X  \sum_{I\in\cL} \bigl| \Delta\ci I f  \bigr|^p dx
= \|b\|_{\BMOs}^p  \|f\|_{\wt\cH^p}^p  .  
\]
By Fefferman--Stein maximal theorem we get from this inequality
\[
\int_\X \sum_{I\in\cL} \bigl| \E\ci I( (\Delta\ci I b)(\Delta\ci I f ) ) \bigr|^p dx \le
C \|b\|_{\BMOs}^p  \|f\|_{\wt\cH^p}^p .
\]
So both sums in \eqref{La_b1-1} can be estimated and we get that $\Lambda_b^1$ is bounded in $\wt\cH^p$.

Assume now that $\pi_b^{(*)}$ is bounded in $\wt\cH^p$, so
\[
\bigl(\pi_b^{(*)} \bigl)^* = \pi_b + (\Lambda_b^1)^* = \pi_b + \Lambda_b^1
\]
is bounded in $\wt\cH^{p'}$.   Testing this operator on functions $\1\ci I$ and counting in the result only martingale differences  with $J\subset I$, we get
\begin{equation}
\label{4.17}
\sup_{I\in\cL} \fint_I \Bigl(\sum_{J\in\cL:J\subset I} | \Delta\ci J b (x) |^{2}    \Bigr)^{\frac1{2}{ p'} } dx \le K^{p'} < \infty, \qquad K =  \bigr \|\bigl( \pi_b^{(*)}\bigr)^* \bigr\|_{\cH^p \shto \cH^p}
\end{equation}
By \cond1 this means that $\pi_b$ is bounded in $\cH^{p'}$ (with the norm at most $CK$), and so $\Lambda_b^1$ is also bounded in $\cH^{p'}$ with the norm at most $C_1 K$. By duality, $\Lambda_b^1$ is   bounded in $\cH^p$ (with the same norm).

Estimate \eqref{4.17} also implies that $\|\Delta\ci I b\|_{p'} \le K |I|^{1/p'}$.

To prove that $b\in\BMOs_{q'}$ it only remains to show that for all $I\in\cL$
\[
 \|\Delta\ci I b \|_\infty \le CK.
\]

Assume that $\|\Delta\ci I b\|_\infty \ge 2^{1/p} K$, because otherwise we already have the desired estimate. Let $J\in \chld(I)$ be an interval where $\|\Delta\ci I b\|_\infty$ is attained. Then
\begin{align*}
2 K^{p} |J| \le \|\Delta\ci I b\|_\infty^p |J| & \le \| \Delta\ci I b\|_{p}^{p} K^{p} |I|,
\end{align*}
so  $|J|  \le |I|/2$.

Define a test function $h$
\[
h:= \1\ci J - \alpha \1\ci{I\setminus J} , \qquad \alpha = |J|/(|I| -|J|) \le 1,\]
so $\int_\X h dx =0$.   Since $|J| \le | I\setminus J|$, Lemma \ref{Haar-norm} implies that
\[
\| h\|_{p}^{p} \le 2 \| \1\ci J \|_{p}^{p} = 2|J| .  
\]

For out test function $\Delta\ci I h =h$ is the only non-zero martingale difference, so it follows from \eqref{La_b1-1} that  
\[
\| h \Delta\ci I b\|_{p} \le \| \Lambda^1_b h \|_{p} + \| \E\ci I (h \Delta\ci I b) \|_{p} .
\]
We can estimate
\[
\| h \Delta\ci I b\|_{p} \ge \| 1\ci{\! J}\, h \Delta\ci I b\|_{p} = |J|^{1/p} \|\Delta\ci I b\|_\infty .
\]

On the other hand,
\begin{align*}
\| \Lambda^1_b h \|_{p}  & \le C_1 K \|h\|_{p} = C_1 K (2|J|)^{1/p} \\
\intertext{and}
\|  \E\ci I (h \Delta\ci I b) \|_{p} & \le  |I|^{1/p}  \fint_I |h\Delta\ci I b| dx \\
&\le
|I|^{1/p -1}  \|\Delta\ci I\|_{p'} \|h\|_p \\
& \le
|I|^{1/p -1}  K |I|^{1/p'} (2|J|)^{1/p} = 2^{1/p} K |J|^{1/p}
\end{align*}
Combining all together we get that
\[
|J|^{1/p} \|\Delta\ci I b\|_\infty \le 2^{1/p} C_1 K |J|^{1/p} + 2^{1/p}  K |J|^{1/p},
\]
so $\|\Delta\ci I b\|_\infty \le C K$, $C=2^{1/p}(C_1+1)$.
\end{proof}

\section{Boundedness of commutators in \texorpdfstring{$L^p$}{L**p}}
\subsection{Sufficiency}
We start with a simple proposition.

\begin{prop}
\label{p.BMO-comm}
Let $p\in(1, \infty)$, and let $T$ be a bounded in $L^p$ (equivalently in $\wt H^p_2$) martingale transform. Let $b$ be a locally integrable function.  

If the formal sum $b_0:= \sum_{I\in\cL} \Delta\ci I b$ is in $\BMOs$, the the commutator $[M_b, T] = M_b T - TM_b$ is bounded in $L^p$ (equivalently in $\wt H^p$). Moreover, 
\[
\| [M_b, T]\|\ci{L^p\shto L^p} \le C \|T\|\ci{L^p\shto L^p} \|b_0\|\ci{\BMOs}, 
\]
 where $C=C(p)$. 
\end{prop}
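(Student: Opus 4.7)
The plan is to exploit the decomposition from Proposition \ref{p1.2},
\[
M_b = \pi_b + \pi_b^{(*)} + \Lambda_b^0 + R_b,
\]
and show that two of the four pieces commute with every martingale transform $T$, while the commutators produced by the remaining two are bounded on $L^p$ in the required sense. This reduces the problem to invoking Theorem \ref{t.para-bound}.

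First I would handle the ``trivial'' commutators. Since $\Lambda_b^0 = \sum_{I}(\E\ci I b)\Delta\ci I$ is a martingale multiplier, it commutes with every martingale transform by definition, giving $[\Lambda_b^0,T]=0$. For $R_b$ I would verify $R_b T=0=TR_b$ separately. The identity $TR_b=0$ is immediate from the fact that $R_b f$ is a (locally finite) linear combination of the $\1\ci I$, $I\in\Aif$, on which $T$ vanishes by assumption. For $R_b T=0$ I would note that any $I\in\Aif$ belongs to $\mathfrak A_{-\infty}\subset\mathfrak A_k$ for every $k$, so each $J\in\cL$ satisfies either $J\subset I$ or $J\cap I=\varnothing$; in either case $\E\ci I(T\ci J\Delta\ci J f)=0$ because $T\ci J\Delta\ci J f\in D\ci J$ has zero mean on $J$, and hence $\E\ci I(Tf)=0$ for every $I\in\Aif$.

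With these two pieces removed, $[M_b,T]=[\pi_b,T]+[\pi_b^{(*)},T]$, and it suffices to prove that $\pi_b$ and $\pi_b^{(*)}$ (which depend only on $\{\Delta\ci I b\}_{I\in\cL}$, i.e.~only on $b_0$) are bounded on $L^p\simeq\wt\cH^p_2$ with norms controlled by $\|b_0\|_{\BMOs}$. For $\pi_b^{(*)}$ this is \emph{verbatim} Theorem \ref{t.para-bound}\cond2. For $\pi_b$ I would apply Theorem \ref{t.para-bound}\cond1 with $q=2$: the required testing condition \eqref{test1} is precisely the $\gb_\infty^{2,(p)}$-norm of $\{\Delta\ci I b\}_{I\in\cL}$, which by the equivalence of $\gb_\infty^{q,(r)}$-norms (Theorem \ref{t2.3}) coincides up to constants with $\|b_0\|_{\BMOs_2}=\|b_0\|_{\BMOs}$. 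Note $\pi_b$ lands in $\cH^p_2\subset L^p$, so $L^p$-boundedness follows.

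Assembling the pieces and using $\|[A,T]\|\le 2\|A\|\,\|T\|$,
\[
\|[M_b,T]\|\ci{L^p\shto L^p}\le 2\|T\|\bigl(\|\pi_b\|+\|\pi_b^{(*)}\|\bigr)\le C(p)\,\|T\|\,\|b_0\|\ci{\BMOs},
\]
which is the claimed estimate. The only point that is not purely mechanical is the verification that $R_b$ commutes with $T$; everything else is a direct appeal to the paraproduct machinery already established.
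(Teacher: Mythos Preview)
Your proof is correct and follows essentially the same route as the paper: decompose $M_b$ via Proposition \ref{p1.2}, discard $\Lambda_b^0$ and $R_b$ from the commutator, and control $\pi_b$ and $\pi_b^{(*)}$ by Theorem \ref{t.para-bound}. You supply more detail than the paper (the explicit verification that $R_bT=TR_b=0$, and the appeal to Theorem \ref{t2.3} to pass from the $\BMOs$ norm to the testing condition \eqref{test1}), but the argument is the same.
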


\begin{rem*}
Note, that the case $p=1$ is not included here. While the condition $b_0\in \BMOs$ is necessary and sufficient for the boundedness of the paraproduct $\pi_b$ in $\wt H^1$, this condition is not sufficient for the boundedness of the adjoint $\pi_b^*$ there, even in the simplest case of the standard dyadic grid. 

This can be easily seen by going to the dual space and noticing that the condition $b\in \BMO$ is not sufficient for the boundedness of the paraproduct $\pi_b$ in $\BMO$ (we are considering the simplest case of the standard dyadic grid on $\R$ here, so all BMO spaces coincide). Since the condition $f\in \BMO$ does not imply any bounds on the averages $\La f\Ra\ci I$, on can take an unbounded function $f\in\BMO$ (so the averages $\La f\Ra\ci I$ are not uniformly bounded) and easily construct a function $b\in\BMO$ such that $\pi_b f\notin \BMO$. 
\end{rem*}

\begin{proof}[Proof of Proposition \ref{p.BMO-comm}]  
By Proposition \ref{p1.2}
\[
M_b    = \pi_b^{(*)} + \Lambda_b^0 + \pi_b + R_b.
\]
Operator $\Lambda_b^0$ commutes with all martingale transforms, so we can exclude it from the commutator. Since $T R_b = R_b T =0$, we can exclude $R_b$ as well, so 
\[
[M_b ,  T] = [\pi_b + \pi^{(*)}_b , T]. 
\] 
Therefore, if $\pi_b$ and $\pi_b^{(*)}$ are bounded, the commutator is bounded as well. But according to Theorem \ref{t.para-bound}, the condition $b\in \BMO$ implies the boundedness of both paraproducts $\pi_b$ and $\pi^{(*)}_b$ (for $q=2$ condition \cond1 of Theorem \ref{t.para-bound} follows from condition \cond2 there).
\end{proof}

It will be shown later that in the case when $\Aif \cap \cL =\varnothing$ and the martingale transform $T$ has the right ``mixing'' properties, the condition  $\sum_{I\in\cL} \Delta\ci I b \in \BMOs$ is also necessary for the boundedness of the commutator.  
If $\Aif \cap \cL \ne\varnothing$,  the sufficient condition $\sum_{I\in\cL} \Delta\ci I b \in \BMOs$ can be relaxed a little. As it will be shown below in Section \ref{s-nec-BMO} this relaxed condition is also necessary (again if the martingale transform $T$ has the right ``mixing'' properties).

\subsection{Necessity}
\label{s-nec-BMO} We want to state and prove an inverse (at least partial) to the above Proposition \ref{p.BMO-comm}. Of course, to prove such a theorem one needs to make some additional assumptions about the martingale transform $T$ (for example, identity is a martingale transform, and it commutes with everything).  

\begin{df}
\label{df.non-deg}
Let $T$ be a martingale transform. Following S.~Janson \cite{Janson-CommMartBMO_1981}, we say that an interval $I\in\cL$ with parent $I'$ is $(p, \e, K)$ non-degenerate for $T$ if  there exists $h =h\ci{I'}\in D\ci{I'} = \Delta\ci{I'} L^2$,
such that
\begin{enumerate}
\item $\|h \|_p=1$,

\item $h|_{I}=0$ ,
\item $\| \1\ci I T\ci{I'} h \|_p \ge \e$,
\item  $\|h\|_\infty \le K |I'|^{-1/p} $ if $I$ is ``small'', namely if $|I|< |I'|/K$.
\end{enumerate}
The last condition \cond4 means that for ``small'' intervals $I$ the function $h$ has to be ``spread'' on the interval $I'$.

If we skip condition \cond4, we get the definition of $(p, \e)$ non-degenerate interval.  

We say that the martingale transform $T$ is weakly $(p, \e, K)$ mixing if each interval $I$ with a parent is either $(p, \e, K)$ non-degenerate for $T$ or $(p', \e, K)$ non-degenerate for the adjoint $T^*$.

We say that the martingale transform $T$ is strongly $(p, \e, K)$ mixing if each interval $I$ with a parent is  $(p, \e, K)$ non-degenerate.

Using the notion of $(p,\e)$ non-degenerate intervals, one can define weakly and strongly $(p, \e)$ mixing martingale transforms.

\end{df}

\begin{rem*}
The above definition of weakly $(p, \e)$ mixing martingale transform is essentially a restatement (and a generalization) of the  definition of a non-degenerate transform from \cite{Janson-CommMartBMO_1981}. It was given there for case of the uniform $r$-adic lattice, with all operators $T\ci I$ being equal (after canonical identification of all subspaces $D\ci I$).

For the case considered in \cite{Janson-CommMartBMO_1981}, our definition coincides with one given there. The easiest way to see this equivalence is to look directly at the proof of Theorem 2 in \cite{Janson-CommMartBMO_1981} (at least that was the easiest way for me).

Note also, that for homogeneous lattices the norms $\|f\|_p |I|^{-1/p}$ on $D\ci I$ are all equivalent. That means that any (weakly or strongly) $(p, \e)$ mixing martingale transform is also $(p, \e, K)$ mixing (resp.~weakly or strongly) with appropriate $K$.   It also mean that any $(p, \e)$ mixing martingale transform is also $(r, \e')$ mixing with appropriate $\e'$.
\end{rem*}

Recall that we defined the formal sum $b_0=\sum_{I\in\cL}\Delta\ci I b$. Define also the formal sum 
\begin{equation}
\label{wt-b_0}
\wt b_0:= \sum_{I\in\cL\setminus \Aif} \Delta\ci I b
\end{equation}
(note that $b_0=\wt b_0$ if $\Aif\cap \cL=\varnothing$). 

\begin{thm}
\label{t.comm-bmo}
Let $p\in (1, \infty)$ and let $T$ be  a strongly $(p, \e, K)$ mixing   martingale transform, such that its blocks $T\ci I$ are uniformly bounded in $L^p$.%
\footnote{Note, that for $p\ne 2$ this condition is weaker than boundedness of $T$ in $L^p$.}

If the commutator $[T, M_b]$ is bounded in $L^p$,  then $\wt b_0\in \BMOs=\BMOs_2$;

Moreover, for $p=2$ it is sufficient to assume that $T$ weakly $(2, \e, K)$ mixing   martingale transform. 

Finally, the norm $\| \wt b_0\|\ci{\BMOs}$ can be estimated by a constant depending on $p$, $\|[M_b, T]\|\ci{L^p\shto L^p}$, $\sup_{I\in\cL}\|T\ci I\|$ and $\e$, $K$ from Definition \ref{df.non-deg}.
\end{thm}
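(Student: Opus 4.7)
The plan is to show the paraproduct $\pi_b^{(*)}$ is bounded in $\wt\cH^p$, from which $\wt b_0\in\BMOs$ follows by Theorem~\ref{t.para-bound}\,(ii). By Proposition~\ref{p1.2}, $M_b=\pi_b+\pi_b^{(*)}+\Lambda_b^0+R_b$; the martingale multiplier $\Lambda_b^0$ commutes with every martingale transform, and the convention $T\E\ci I=0$ for $I\in\Aif$ makes both $TR_b$ and $R_bT$ vanish. Hence the hypothesis gives $L^p$-boundedness of $[\pi_b+\pi_b^{(*)},T]$.

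To obtain a uniform bound on the Haar coefficients, fix $I\in\cL\setminus\Aif$ with parent $I'$ and let $h=h\ci{I'}\in D\ci{I'}$ come from the non-degeneracy. Since $h\in D\ci{I'}$, Lemma~\ref{l.pi(*)} gives $\pi_b^{(*)}h=(\Delta\ci{I'}b)h$ and $\pi_b^{(*)}(Th)=(\Delta\ci{I'}b)(T\ci{I'}h)$, while $\pi_b h=\sum_{J\subsetneq I'}(h|_J)(\Delta\ci J b)\1\ci J$ is supported on $I'\setminus I$ because $h|_I=0$. Restricted to $I$, the leading part of $[\pi_b+\pi_b^{(*)},T]h$ is therefore $(\Delta\ci{I'}b|_I)\cdot(T\ci{I'}h)|_I$; the remaining contributions arise from $T\bigl((\Delta\ci{I'}b)h\bigr)-(\Delta\ci{I'}b)(T\ci{I'}h)$ and from the $\pi_b$-commutator, and are controlled using the uniform bound $\sup_I\|T\ci I\|_{L^p}$ together with property~(iv) of non-degeneracy, which prevents $h$ from concentrating off $I$ when $|I|<|I'|/K$. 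Combining with property~(iii), $\|\1\ci I T\ci{I'}h\|_p\ge\e$, yields $|\Delta\ci{I'}b|_I|\le C$ uniformly in $I$, with $C$ depending only on $p,\e,K,\sup_I\|T\ci I\|_{L^p}$, and $\|[M_b,T]\|_{L^p\to L^p}$.

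The BMO condition also requires the Carleson bound $\sup_{I_0}\fint_{I_0}\sum_{J\subseteq I_0}|\Delta\ci J b|^2\1\ci J\le C$. For this, fix $I_0$ and test the commutator on a superposition $f=\sum_{J\subseteq I_0}\alpha\ci J h\ci{J'}$, where $J'$ denotes the parent of $J$ and $h\ci{J'}$ comes from the non-degeneracy at $J$. Orthogonality of the $D\ci{J'}$ across different parents makes $\|f\|_p$ comparable to the appropriate $\gb_p^2$-norm of the coefficient sequence. Pairing $[\pi_b+\pi_b^{(*)},T]f$ against a suitable $g$ supported on $I_0$ and choosing $\alpha\ci J$ by a resonance (duality) argument, the leading contribution takes the form $\sum_{J\subseteq I_0}\alpha\ci J(\Delta\ci J b)(T\ci{J'}h\ci{J'})|_J$, whose norm evaluated at the optimal $\alpha\ci J$ produces the Carleson estimate. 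The weakly-$(2,\e,K)$-mixing variant for $p=2$ is handled by $L^2$ self-duality: where only $T^*$ (rather than $T$) is non-degenerate at $I$, the same argument is applied to $[M_b,T]^*=[M_b,T^*]$.

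The main obstacle is the third step. Combining the single-scale test functions $h\ci{J'}$ into a multi-scale superposition produces off-diagonal interactions: $\pi_b h\ci{J'}$ lives on $J'\setminus J$ and $T$ spreads it further, interacting non-trivially with $h\ci{K'}$ at other scales. Absorbing these cross terms should require a Carleson-packing/stopping-time argument in the spirit of the proof of Theorem~\ref{t.para-embed}, using the $L^\infty$-bound from the second step together with the Fefferman--Stein maximal inequality to handle the nested contributions.
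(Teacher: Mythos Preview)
Your plan inverts the natural order and, as a result, manufactures a difficulty that is not there. The Carleson estimate
\[
\sup_{I\in\cL\setminus\Aif}\ \fint_I\Bigl(\sum_{J\in\cL:J\subset I}|\Delta\ci J b|^2\Bigr)^{p/2}dx\le C
\]
is not ``the main obstacle''; it is the easy half, and it comes from a \emph{single-scale} test, not a multi-scale superposition. Fix $I$ with parent $I'$ and let $h=h\ci{I'}$ be the function from the $(p,\e)$ non-degeneracy. Because $h|_I=0$ we have $(bh)|_I=0$, hence $\Delta\ci J(bh)=0$ and therefore $\Delta\ci J(T M_b h)=T\ci J\Delta\ci J(bh)=0$ for every $J\subset I$. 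On the other hand $Th=T\ci{I'}h$ is constant $c$ on $I$ with $|c|\ge\e|I|^{-1/p}$, so $\Delta\ci J(M_bTh)=c\,\Delta\ci J b$. Thus $\Delta\ci J\bigl([M_b,T]h\bigr)=c\,\Delta\ci J b$ for all $J\subset I$, and the $L^p$-boundedness of the commutator (read in $\cH^p_2$) gives
\[
|c|\cdot\Bigl\|\Bigl(\sum_{J\subset I}|\Delta\ci J b|^2\Bigr)^{1/2}\Bigr\|_p\le \|[M_b,T]\|\,\|h\|_p=\|[M_b,T]\|,
\]
which is exactly the Carleson bound. No superposition $f=\sum\alpha\ci J h\ci{J'}$, no stopping times, no cross terms to absorb. (For $p=2$ and weak mixing, when only $T^*$ is non-degenerate at $I$ one runs the same computation with $[M_b,T]^*=-[M_b,T^*]$.)

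This also undermines your step 2 as written: the argument for $\|\Delta\ci{I'}b\|_\infty\le C$ genuinely needs the bound $\|\Delta\ci{I'}b\|_p\le C|I'|^{1/p}$ as input (it appears when you estimate the term $\E\ci{I'}[(\Delta\ci{I'}b)(T\ci{I'}h)]$ and the term $\La T\ci{I'}(\Delta\ci{I'}b)h,\,g\Ra$), and that $L^p$ bound is precisely what the Carleson estimate supplies. So the correct order is: Carleson first (trivial single-scale test), then the $L^\infty$ bound (where condition \textup{(iv)} of Definition~\ref{df.non-deg} and the uniform block bound $\sup_I\|T\ci I\|$ are actually used). Your step 2 sketch is also too vague about \emph{where} condition \textup{(iv)} enters: it is needed to control $\|h\|_\infty$ in the pairing $\La T\ci{I'}(\Delta\ci{I'}b)h,\,g\Ra$, with $g=\1\ci I-\gamma\1\ci{I'\setminus I}$, when $|I|<|I'|/K$; for $|I|\ge|I'|/K$ the $L^\infty$ bound already follows from $\|\Delta\ci{I'}b\|_p\le C|I'|^{1/p}$.

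Finally, the opening strategy ``show $\pi_b^{(*)}$ is bounded and invoke Theorem~\ref{t.para-bound}\,(ii)'' is a detour: there is no clean way to isolate $\pi_b^{(*)}$ from the commutator, and you do not need to. One proves the two defining conditions of $\BMOs_2$ directly.
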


\begin{prop}
\label{p.comm-bmo-}
Let $p\in (1, \infty)$ and let  $T$ be a (possibly unbounded)  strongly $(p, \e)$ mixing   martingale transform (weakly $(p, \e)$ mixing for $p=2$).

If the commutator $[M_b, T]$ is bounded in ${\cH^p_2}$, then for any interval $I\in\cL\setminus \Aif$  a uniform estimate,
\begin{equation}
\label{comm-bmo-}
\fint_I \biggl(\sum_{J\in\cL:J\subset I} |\Delta\ci J b|^2  \biggr)^{p/2} dx \le C <\infty, \qquad C^{1/p}=C_1 \|[M_b, T]\|/\e.   
\end{equation}
where $C_1=C_1(p)$, holds.
\end{prop}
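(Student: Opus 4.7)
I would aim to prove the local Carleson estimate
\[
\fint_{I_0}\Bigl(\sum_{J\in\cL,\,J\subset I_0}|\Delta\ci J b|^2\Bigr)^{p/2}dx\le C
\]
for each $I_0\in\cL\setminus\Aif$, with $C$ depending only on $p$, $\|[M_b,T]\|$ and $\varepsilon$. The first step is a localization of $b$. Set $b^{I_0}:=\sum_{J\subseteq I_0}\Delta\ci J b$; then $b-b^{I_0}$ is constant on $I_0$, and for any mean-zero function $f$ supported in $I_0$ the image $Tf$ is again supported in $I_0$ (all $\Delta\ci J f$ with $J\not\subseteq I_0$ vanish), so $[M_{b-b^{I_0}},T]f=0$. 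Hence on such test functions $[M_b,T]$ and $[M_{b^{I_0}},T]$ coincide, and I may replace $b$ by $b^{I_0}$ in all subsequent calculations.

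The mixing assumption provides the test functions. For every pair $(I,I')$ with $I'\subseteq I_0$ and $I\in\chld(I')$, strong $(p,\varepsilon)$-mixing yields $h=h_{I'}^{(I)}\in D\ci{I'}$ with $\|h\|_p=1$, $h|_I=0$ and $\|\1\ci I T\ci{I'}h\|_p\ge\varepsilon$. Since $h\in D\ci{I'}$, $Th=T\ci{I'}h$, and the basic identity is
\[
\1\ci I[M_b,T]h \;=\; b\cdot\1\ci I T\ci{I'}h \;-\; \1\ci I T(bh).
\]
On $I'$ one decomposes $b=\Delta\ci{I'}b+\sum_{J\subsetneq I'}\Delta\ci J b$ (the terms with $J\supsetneq I'$ are constants on $I'$ and cancel in the commutator), so $bh=(\Delta\ci{I'}b)h+\sum_{J\subsetneq I'}(\Delta\ci J b)h$. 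The first piece lives in $D\ci{I'}$ modulo its mean, so its $T$-image is again $T\ci{I'}$ applied to the mean-zero part, and produces the main term of order $\varepsilon\,\|\Delta\ci{I'}b\|_\infty$ via the non-degeneracy lower bound. The second piece is a paraproduct-type tail supported on $I'\setminus I$ which must be absorbed.

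To extract the Carleson bound I would dualize: pick $g\in\cH^{p'}_2$ supported in $I_0$ with $\|g\|\ci{\cH^{p'}_2}\le 1$, and test $[M_b,T]$ against a combined function $H=\sum_{I'\subseteq I_0} c\ci{I'}\,h_{I'}^{(I(I'))}$, where the children $I(I')\in\chld(I')$ and the scalars $c\ci{I'}$ are chosen so that the main contribution in the identity above delivers, after pairing with $g$, the dual sum $\sum_{I'\subseteq I_0}\int(\Delta\ci{I'}b)(\Delta\ci{I'}g)dx$ that realizes $\|b^{I_0}\|\ci{\cH^p_2}$ by duality, while Burkholder's inequality (Theorem \ref{t-Burkh}) and the normalization $\|h_{I'}^{(I)}\|_p=1$ give $\|H\|_p\lesssim|I_0|^{1/p}$. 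The main obstacle is the correction term $\sum_{J\subsetneq I'}(\Delta\ci J b)h$: its norm involves the very Carleson quantity being bounded, so one must run a stopping-time/bootstrap argument in the spirit of the proof of Theorem \ref{t.para-bound}(ii) combined with the embedding Theorem \ref{t.para-embed}, partitioning $I_0$ along a stopping tree on which the localized averages first double and summing the resulting geometric contributions. Finally, for $p=2$ the hypothesis can be relaxed to weak $(2,\varepsilon)$-mixing because $L^2$ is self-dual: whenever a pair $(I,I')$ is degenerate for $T$ it is non-degenerate for $T^*$, and one applies the same argument to $[M_b,T^*]=-[M_b,T]^*$, whose boundedness is equivalent.
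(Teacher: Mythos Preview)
Your approach is more complicated than needed and misses the observation that makes the proof almost immediate. The paper uses a \emph{single} test function per interval $I$ and reads off the \emph{entire} square-function sum $\sum_{J\subset I}|\Delta\ci J b|^2$ directly, with no correction terms and no bootstrap.

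The key point you are missing is this. With $h\in D\ci{I'}$ as supplied by non-degeneracy ($h|_I=0$, $Th=T\ci{I'}h$ equal to a constant $c$ on $I$, $|c|\ge\varepsilon|I|^{-1/p}$), look not at $\1\ci I[M_b,T]h$ but at the martingale differences $\Delta\ci J\bigl([M_b,T]h\bigr)$ for $J\subset I$. Since $h|_I=0$ we have $(bh)|_I=0$, hence $\Delta\ci J(bh)=0$ for every $J\subset I$; because $T$ is a martingale transform, $\Delta\ci J\bigl(T(bh)\bigr)=T\ci J\Delta\ci J(bh)=0$. On the other side $Th=c$ on $I$, so $\Delta\ci J(bTh)=c\,\Delta\ci J b$. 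Thus
\[
\Delta\ci J\bigl([M_b,T]h\bigr)=c\,\Delta\ci J b\qquad(J\subset I),
\]
and taking the $\cH^p_2$ norm of this portion yields
\[
|c|\,\Bigl\|\Bigl(\sum_{J\subset I}|\Delta\ci J b|^2\Bigr)^{1/2}\Bigr\|_p\le\|[M_b,T]\|\,\|h\|\ci{\cH^p_2}=\|[M_b,T]\|,
\]
which after dividing by $|c|\ge\varepsilon|I|^{-1/p}$ is exactly \eqref{comm-bmo-}.

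Your ``paraproduct-type tail'' $\sum_{J\subsetneq I'}(\Delta\ci J b)h$ is therefore not an obstacle at all: after applying $T$ it contributes nothing at scales $J\subset I$ (equivalently, $\1\ci I T(bh)$ is constant on $I$ and is killed by every $\Delta\ci J$ with $J\subset I$). There is nothing to absorb, and no stopping-time or bootstrap argument is needed. Your dualization scheme with the combined function $H$ is also aimed at the wrong target --- you are trying to isolate the individual layers $\Delta\ci{I'}b$ one at a time, whereas the single function $h$ already delivers the whole tower $\{J\subset I\}$ at once. The remark about weak mixing sufficing for $p=2$ via self-duality is correct.
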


\begin{proof}[Proof of Proposition \ref{p.comm-bmo-}]

The  proof directly follows \cite{Janson-CommMartBMO_1981}. For an interval $I$, let $I'$ be its parent, so $I\in\chld(I')$. We know that  that $I$ is $(p, \e)$ non-degenerate. Let $h=h\ci{I'}\in D\ci{I'}$ be the function from Definition \ref{df.non-deg} such that $\|h \|_p=1$, $h|_{I}=0$ and $\|\1\ci I T\ci{I'} h\|_p\ge \e$.

Note that
$\|h\|_p = \|h\|_{{\cH^p_2}}$.

Recall that the function $Th=T\ci{I'} h$ is constant on $I$, and let $c$ be its value there. The inequality $\|\1\ci I T\ci{I'} h\|_p\ge \e$ means that $|c|\ge \e |I|^{-1/p}$.

We get that on $I$
\[
M_b T h  = c b.
\]
On the other hand, $bh=0$ on $I$, so $(Th)|_{I}$ is a constant, so for $J\subset I$
\[
\Delta\ci J \Bigl((M_b T - TM_b)h\Bigr)  = c\Delta\ci J h.
\]
The fact that $M_b T - TM_b$ is bounded in $\crc{\cH^p_2}$ implies that
\[
|c | \cdot \biggl\| \biggl( \sum_{J\in \cL J\subset I}\Delta\ci J \biggr)^{1/2} \biggr\|_p\le C, \qquad C = \|[M_b, T]\|,
\]
so taking into account that $|c|\ge \e |I|^{-1/p}$ we get the conclusion of the proposition.

For $p=2$, we can assume that $T$ is weakly  $(p, \e)$ mixing, because if $I$ is $(p, \e)$ non-degenerate for $T'$, we can consider the adjoint of the commutator, to get the same conclusion.  
This would not work for $p\ne 2$, because in this case we get the estimate with the exponent $p'$ instead of $p$.
\end{proof}


\subsubsection{Proof of Theorem \ref{t.comm-bmo}}


To prove the theorem we need to show that $\|\Delta\ci{I} b\|_\infty$ are uniformly bounded for all $I\in\cL\setminus \Aif$. 

Consider an interval (let us call it $I'$) belonging to $\cL\setminus\Aif$. Notice that inequality   
 \eqref{comm-bmo-} implies that $\|\Delta\ci{ I'} b\|_p\le C^{1/p} |I'|^{1/p}<\infty$.
Assume that $M:=\|\Delta\ci{I'} b\|_\infty$ is attained on $I\in\chld(I')$.


We can assume that $|I|<|I'|/K$, because otherwise
\[
\| \1\ci{I} \Delta\ci{I'} b \|_\infty^p  =  |I|^{-1} \|\1\ci{I} \Delta\ci{I'} b \|_p^p
\le
|I|^{-1} \| \Delta\ci{I'} b \|_p^p
\le |I|^{-1} C |I'| \le KC .   
\]

Define
\[
g= \1\ci I - \gamma \1\ci{I'\setminus I},
\]
where the constant $\gamma$ is chosen so $\int_\X g dx =0$. Let $E\subset D_{I'}$ be the annihilator of $g$ in $D_{I'}$
\[
E = \Bigl\{ f\in D_{I'}: \int_\X gf dx = 0 \Bigr\} .
\]
Note that $E$ consist of all functions $f\in D_{I'}$ supported outside of $I$. Indeed, any such function annihilates $g$, and counting dimensions, we can conclude that we got all the functions in the annihilator.

Such structure of $E$ implies that
\[
\int_\X f\overline g dx = 0 \qquad \forall f\in E,
\]
so $E$ is the orthogonal complement of $g$ in $D_{I'}$. Therefore, $D_{I'}$ can be decomposed into the direct sum of $\spn\{g\}$ and $E$.

We can decompose
\begin{equation}
\label{T_I'h}
(\Delta\ci{I'} b) T\ci{I'} h = \alpha g + f + \E\ci{I'} [(\Delta\ci{I'} b) (T\ci{I'} h)], \qquad f\in E.
\end{equation}
By the assumption \cond3 about $h$, $|T\ci{I'} h|\ge \e |I|^{-1/p}$ on $I$. Therefore, since $f\bigm|_{I}=0$ and $g\bigm|_{I} =1$,  we get from \eqref{T_I'h} by restricting it to $I$ and comparing $L^p$ norms (divided by $|I|^{1/p}$), that
\begin{align}
\label{5.3}
 \e |I|^{-1/p} \| \Delta\ci{I'} b\|_\infty  & \le |\alpha | + \| \E\ci{I'} [(\Delta\ci{I'} b) (T\ci{I'} h)]\|_\infty &&
\\
\notag
 & \le |\alpha | + |I'|^{-1} \| \Delta\ci{I'} b\|_{p'} \|T\ci{I'}\| \|h\|_p   &&
 \\  \notag
 &\le |\alpha | + C |I'|^{-1} \| \Delta\ci{I'} b\|_{p'} . &&  
\end{align}
So, to estimate $\| \Delta\ci{I'} b\|_\infty$ we need to estimate both terms in the right side of \eqref{5.3}.

We get the bound on $|\alpha|$ from the boundedness of the commutator. Namely,
since
\[
b\bigm|_{I'} = \E\ci{I'} b +\Delta\ci{I'} b + \sum_{J\in L:J\subsetneqq I'} \Delta\ci{J'} b =: \E\ci{I'} b +\Delta\ci{I'} b + b^{I'}
\]
and $b^{I'} \perp D\ci{I'}$, $b^{I'} D\ci{I'} \perp D\ci{ I'}$, we can write
\begin{alignat*}{2}
\La M_b T h, g \Ra & = \La M_b T\ci{I'} h, g \Ra & & = \La (\Delta\ci{I'} b) T\ci{I'} h, g \Ra + \La (\E\ci{I'} b) T\ci{I'} h, g \Ra, \\
\La T M_b  h, g \Ra & = \La T\ci{I'} M_b  h, g \Ra   &  &= \La T\ci{I'} (\Delta\ci{I'} b)  h, g \Ra + \La T\ci{I'} (\E\ci{I'} b)  h, g \Ra.
\end{alignat*}
Here we slightly abusing the notation by treating $T\ci{I'}$ as the operator on all $L^p$, i.e.~as a martingale transform whose only non-zero block is $T\ci{I'}$ (we need to do that because $(\Delta\ci{I'} b)  h$ does not generally belongs to $D\ci{I'}$). In this context $T\ci{I'} (\Delta\ci{I'} b)  h = 
T\ci{I'} \bigl[ (\Delta\ci{I'} b)  h - \E\ci{I'}( (\Delta\ci{I'} b)  h ) \bigr]$, where $T\ci{I'}$ in the right side  can be treated as a block acting in $D\ci{I'}$. 
 
Using the fact that $(\E\ci{I'} b) T\ci{I'} h = T\ci{I'} (\E\ci{I'} b)  h$ we conclude, again abusing the notation as above,  that for the commutator $[M_b, T] = M_b T - TM_b$
\begin{equation}
\label{5.4}
\La [M_b, T] h, g \Ra = \La (\Delta\ci{I'} b) T\ci{I'} h, g \Ra - \La T\ci{I'} (\Delta\ci{I'} b)  h, g \Ra.
\end{equation}
We get from \eqref{T_I'h} that
\begin{equation}
\label{5.5}
\left| \La (\Delta\ci{I'} b) T\ci{I'} h, g \Ra \right| = |\alpha| \cdot \|g\|_2^2 \ge |\alpha| \cdot |I|.
\end{equation}

By Lemma \ref{Haar-norm} $\| g\|_{p'} \le 2^{1/p'} \|\1\ci I\|_{p'} = 2^{1/p'} |I|^{1/p'}$. Using this estimate and
 the assumption $\|h\|_\infty\le K |I'|^{-1/p}$, we get
\begin{align*}
\left|    \La T\ci{I'} (\Delta\ci{I'} b)  h, g \Ra \right| & \le \|T\ci{I'}\| \cdot \|h\|_\infty \|\Delta\ci{I'} b\|_p \|g\|_{p'} \\
& \le C K |I'|^{-1/p} C |I'|^{1/p} 2^{1/p} |I|^{1/p'} \le C |I|^{1/p'} .
\end{align*}
Using the above estimate together with the estimate
\[
\left| \La [M_b, T] h, g \Ra \right| \le C \|h\|_p \|g\|_{p'} \le C \cdot 1\cdot |I|^{1/p'}
\]
we get from \eqref{5.4} and \eqref{5.5} that
\begin{align*}
|\alpha| \cdot |I| &\le  \left| \La (\Delta\ci{I'} b) T\ci{I'} h, g \Ra \right| \\
& \le \bigl| \La [M_b, T] h, g \Ra \bigr| + \bigl|    \La T\ci{I'} (\Delta\ci{I'} b)  h, g \Ra \bigr| 
\\
&\le C |I|^{1/p'} + C |I|^{1/p'} = C |I|^{1/p'} , 
\end{align*}
so
\[
|\alpha | \le C |I|^{-1/p}.
\]
Combining the last inequality with \eqref{5.3} we get
\begin{align}
\label{5.6}
\|\Delta\ci{I'} b\|_\infty & \le \frac{|I|^{1/p}}{\e}  |\alpha|  + C \frac{|I|^{1/p}}{\e}\|\Delta\ci{I'} b\|_{p'} |I'|^{-1}
\\
\notag
& \le
C + C |I|^{1/p} |I'|^{-1} \|\Delta\ci{I'} b\|_{p'}
\end{align}

If $p'\le p$, H\"{o}lder inequality implies that
\[
|I'|^{-1/p'} \| \Delta\ci{I'} b\|_{p'} \le |I'|^{-1/p} \| \Delta\ci{I'} b\|_{p} \le C
\]
so
\[
\|\Delta\ci{I'} b\|_\infty \le C + C |I|^{1/p} |I'|^{-1/p} \le C'.
\]
If $p'>p$, Lemma \ref{l.5.5} below implies that
\[
\|\Delta\ci{I'} b\|_{p'} \le  \|\Delta\ci{I'} b\|_p^{p/p'}  \|\Delta\ci{I'} b\|_\infty^{1-p/p'}
\]
and we get from \eqref{5.6}
\begin{align}
\label{5.7}
\|\Delta\ci{I'} b\|_\infty   
& \le C + C |I|^{1/p} |I'|^{-1} \|\Delta\ci{I'} b\|_p^{p/p'}  \|\Delta\ci{I'} b\|_\infty^{1-p/p'}
\\  \notag
& \le C + C |I|^{1/p} |I'|^{-1/p} \|\Delta\ci{I'} b\|_\infty^{1-p/p'},
\end{align}
the last inequality being true because
\[
|I'|^{-1/p'} \|\Delta\ci{I'} b\|_p^{p/p'} =
\left( |I'|^{-1/p} \|\Delta\ci{I'} b\|_p \right)^{p/p'} \le C^{p/p'} \le C'.
\]
Since $|I|\le|I'|$, \eqref{5.7} implies
\[
\|\Delta\ci{I'} b\|_\infty  \le C + C \|\Delta\ci{I'} b\|_\infty^{1-p/p'},
\]
which gives us a bound $\|\Delta\ci{I'} b\|_\infty  \le C'$.
\hfill\qed

\begin{lm}
\label{l.5.5}
Let $f$ be a bounded measurable function on a measure space $\X$. Then for any $q>p$
\[
\|f\|_q \le \|f\|_p^{p/q} \|f\|_\infty^{1-p/q}
\]
\end{lm}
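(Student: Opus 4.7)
The plan is to use the pointwise bound $|f(x)|^{q-p} \le \|f\|_\infty^{q-p}$, which holds a.e., to interpolate between $L^p$ and $L^\infty$ in the most direct way possible. This is a classical ``log-convexity of $L^p$ norms'' argument, and no approximation or truncation is needed because $f$ is assumed bounded.

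Concretely, I would factor the integrand as $|f|^q = |f|^p \cdot |f|^{q-p}$ (which is legitimate since $q > p$), bound the second factor by $\|f\|_\infty^{q-p}$, and integrate:
\begin{equation*}
\int_\X |f|^q \, dx \le \|f\|_\infty^{q-p} \int_\X |f|^p \, dx = \|f\|_\infty^{q-p} \, \|f\|_p^p.
\end{equation*}
Taking the $q$-th root of both sides gives
\begin{equation*}
\|f\|_q \le \|f\|_\infty^{(q-p)/q} \, \|f\|_p^{p/q} = \|f\|_p^{p/q} \, \|f\|_\infty^{1-p/q},
\end{equation*}
which is exactly the claimed inequality.

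There is essentially no obstacle here; the only thing to be mildly careful about is the case $\|f\|_\infty = \infty$ or $\|f\|_p = \infty$, where the inequality is trivial (interpreting $0 \cdot \infty$ appropriately), and the case $\|f\|_\infty = 0$, where $f = 0$ a.e.\ and both sides vanish. In the stated setting ($f$ bounded) the inequality is a one-line computation, so the ``proof'' really amounts to displaying the two lines above.
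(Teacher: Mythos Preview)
Your proof is correct and is essentially identical to the paper's own argument: factor $|f|^q = |f|^p |f|^{q-p}$, bound $|f|^{q-p}$ by $\|f\|_\infty^{q-p}$, integrate, and take the $q$-th root.
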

\begin{proof}
\[
\|f\|_q^q = \int_\X |f|^q d\mu = \int_\X |f|^p |f|^{q-p} d\mu \le
\|f\|_\infty^{q-p} \int_\X |f|^p d\mu  = \|f\|_\infty^{q-p} \| f\|_p^p
\]
and raising this inequality to the power $1/q$ we get the conclusion of the lemma.
\end{proof}
 
\subsection{Relaxing sufficient condition} If $\cL\cap \Aif = \varnothing$, we have $b_0=\wt b_0$, so 
$b_0\in\BMOs$ 
is a necessary and sufficient condition for the boundedness of the commutator $[M_b, T]$ (provided that $T$ satisfies assumptions of Theorem \ref{t.comm-bmo}). 

If $\cL\cap \Aif \ne \varnothing$ there is a gap between necessary and sufficient conditions. Notice, that the situation $\cL\cap \Aif \ne \varnothing$ is not an exotic one. For example, it happens in the classical martingale situation, which in our notation mean that $\cL_k = \cL_0 =\X$ for all $k<0$, $|\X|=1$. 

To bridge the gap between necessary and sufficient conditions in the case $\cL\cap \Aif \ne \varnothing$, we can relax sufficient conditions in Proposition \ref{p.BMO-comm}.

\begin{prop}
\label{p.BMO-comm-2}
Let $b$ be a locally integrable function, and let $T$ be a bounded in $L^p$ martingale transform.  Assume that
\begin{enumerate}
\item $\wt b_0\in \BMOs$, where $\wt b_0$ is defined by \eqref{wt-b_0}; 
\item For any $I\in\cL\cap \Aif$
\[
\| T\ci I \Delta\ci I b \|_p \le C_1 \|\1\ci I\|_p =C_1 |I|^{1/p}, \qquad 
\| T\ci I^* \Delta\ci I b \|_{p'} \le C_1 \|\1\ci I\|_{p'} =C_1 |I|^{1/p'}  ;
\]
\item For any $I\in\cL\cap \Aif$
\[
\left\| \left[ (\Lambda_b^1)\ci I, T\ci I \right] \right\|_{L^p\shto L^p} \le C_2 <\infty;
\]
here $(\Lambda_b^1)\ci I$ is the restriction of $\Lambda_b^1$ on $D\ci I$. 
\end{enumerate}
Then the commutator $[M_b, T]$ is bounded in $L^p$, and 
\[
\left\|  [M_b, T]  \right\|_{L^p\shto L^p} \le C \left( \|T\|_{L^p\shto L^p} \|\wt b_0\|\ci{\BMOs} + C_1 +C_2 \right), 
\] 
where $C=C(p)$ and $C_1$ $C_2$ are the constants from \textup{\cond2}, \textup{\cond3}. 
\end{prop}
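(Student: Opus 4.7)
By Proposition \ref{p1.2} we decompose
\[
M_b = \pi_b + \pi_b^* + \Lambda_b^0 + \Lambda_b^1 + R_b.
\]
The multiplier $\Lambda_b^0$ commutes with every martingale transform, and, as observed in the proof of Proposition \ref{p.BMO-comm}, $T R_b = R_b T = 0$, so
\[
[M_b, T] = [\pi_b, T] + [\pi_b^*, T] + [\Lambda_b^1, T].
\]
The plan is to split each of these three sums into an ``outer'' half, indexed by $I \in \cL \setminus \Aif$, and an ``inner'' half, indexed by $I \in \cL \cap \Aif$, and treat the two separately.

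For the outer halves the relevant paraproduct objects coincide, respectively, with $\pi_{\wt b_0}$, $\pi_{\wt b_0}^*$, $\Lambda_{\wt b_0}^1$. By assumption (i), $\wt b_0 \in \BMOs$. Theorem \ref{t.para-bound}(ii) then gives the $L^p$-boundedness of $\pi_{\wt b_0}^{(*)} = \pi_{\wt b_0}^* + \Lambda_{\wt b_0}^1$; the Fefferman--Stein step used inside that proof also shows that $\Lambda_{\wt b_0}^1$ alone is bounded on $L^p$, whence $\pi_{\wt b_0}^*$ is bounded as well. Finally, Theorem \ref{t.para-bound}(i) for $q = 2$ (whose test hypothesis follows from $\wt b_0 \in \BMOs$) gives boundedness of $\pi_{\wt b_0}$. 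Composing with the bounded $T$ controls each outer commutator by a constant times $\|T\|\,\|\wt b_0\|_{\BMOs}$.

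For the inner halves the pivotal observation is that $\E\ci I Tf = 0$ for every $I \in \cL \cap \Aif$. Writing $Tf = \sum_{J \in \cL} T\ci J \Delta\ci J f$, the summands with $J \subsetneq I$ integrate to zero on $I$ (each lies in $D\ci J$ and has zero mean on $J \subset I$); the summand $J = I$ has zero mean on $I$ since it belongs to $D\ci I$; and no $J \in \cL$ properly contains $I$, because $I \in \cL \cap \Aif$ forces $I \in \cL_k$ for all $k$ below the rank of $I$. Consequently $\pi_b^{\textup{in}} T = 0$, and $T (\pi_b^*)^{\textup{in}} = 0$ as well, since the output of $(\pi_b^*)^{\textup{in}}$ is a sum $\sum_I c_I \1\ci I$ over $I \in \Aif$, which lies in the $\Aif$-averages subspace that $T$ annihilates. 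The three surviving pieces,
\[
T\pi_b^{\textup{in}} f, \qquad (\pi_b^*)^{\textup{in}} Tf, \qquad [\Lambda_b^{1,\textup{in}}, T] f,
\]
are sums of terms supported on the disjoint intervals $I \in \cL \cap \Aif$, so their $L^p$-norms decouple into $\ell^p$-sums. Assumption (ii) together with the standard bound $\sum_I |\La f\Ra\ci I|^p |I| \le \|f\|_p^p$ controls the first; dualizing $T\ci I$ inside the inner average of $\La (\Delta\ci I b)(T\ci I \Delta\ci I f)\Ra\ci I$ and reapplying (ii) controls the second by $C_1 \bigl(\sum_I \|\Delta\ci I f\|_p^p\bigr)^{1/p}$; assumption (iii) controls the third by $C_2 \bigl(\sum_I \|\Delta\ci I f\|_p^p\bigr)^{1/p}$. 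The routine estimate $\|\Delta\ci I f\|_p \le 2 \|f\|_{L^p(I)}$, combined with the disjointness of the $I \in \cL \cap \Aif$, yields $\sum_I \|\Delta\ci I f\|_p^p \le 2^p \|f\|_p^p$ and completes the bound.

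The principal technical point is the vanishing identity $\E\ci I Tf = 0$ for $I \in \cL \cap \Aif$, which is what collapses the cross-terms in the three inner commutators. Once that is in hand, the argument is a bookkeeping of the six pieces and an assembly of the standard duality and disjoint-support estimates sketched above.
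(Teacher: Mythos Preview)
Your proof is correct and follows essentially the same route as the paper's, which simply declares the result ``obvious'' after noting that conditions \textup{(ii)}, \textup{(iii)} are exactly what is needed for the boundedness of $[M_{b_0-\wt b_0},T]$. You have supplied the details the paper omits: the splitting into outer ($I\in\cL\setminus\Aif$) and inner ($I\in\cL\cap\Aif$) pieces, the identification of the outer pieces with $\pi_{\wt b_0}$, $\pi_{\wt b_0}^*$, $\Lambda_{\wt b_0}^1$, and the vanishing identity $\E\ci I Tf=0$ for $I\in\cL\cap\Aif$ (together with its dual $T(\pi_b^*)^{\textup{in}}=0$), which is indeed the mechanism that reduces the inner commutators to the three surviving terms controlled directly by \textup{(ii)} and \textup{(iii)}.
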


The proof of the theorem is obvious, since for any $I\in \cL\cap \Aif$ the conditions \cond2, \cond3  are necessary and sufficient for the boundedness of the commutator $[M_{b_0-\wt b_0}, T]$ in $L^p$.  The necessity here is quite easy: condition \cond2 is obtained by testing the commutator  $[M_{b_0-\wt b_0}, T]$ and its adjoint on the function $\1\ci I$. To get the condition \cond3 one needs to restrict everything to the subspace $D\ci I$. 

\begin{rem}
As it follows from the above discussion, it $T$ satisfies assumptions of Theorem \ref{t.comm-bmo}, then  conditions \cond1--\cond3 of Proposition \ref{p.BMO-comm-2} are necessary and sufficient for the boundedness of the commutator $[M_b, T]$ in $L^p$. 
\end{rem}

\subsection{Some examples and counterexamples} In this subsection we present examples which will show us that 
\begin{enumerate}
\item Boundedness of the commutator $[M_b, T]$ does not imply any bounds on $\Delta\ci I b$ for $I\in \cL\cap \Aif$;
\item If the martingale transform $T$ is only strongly $(p, \e)$ mixing (not strongly $(p, \e, K)$ mixing), then the boundedness of the commutator $[M_b, T]$ does not imply any bounds on $\|\Delta\ci I b\|_\infty$, $I\in \cL$.  That means that the new condition \cond4 in  Definition \ref{df.non-deg} is essential and cannot be skipped. 
\end{enumerate}

The main building block of our construction will be as follows. Let an interval $I$ be divided into 2 subintervals $I^{1,2}$, $|I^1|/|I^2| = \delta>0$. 
Divide $I^1$ into 4 equal intervals $I_k$, $1\le k \le4$ and $I^2$ into $4$ equal intervals $I_k$, $5\le k \le 8$. 

The intervals $I_k$ will be the children of $I$. Define the ``Haar functions'' $h^k = h^k_I\in D\ci I$
\[
h^k :=\1\ci{I_{2k}} - \1\ci{I_{2k-1}}, \qquad 1\le k \le 4. 
\]
Note, that the functions $h^k$ do not span the  martingale difference subspace $D\ci I$. Define also a ``Haar function'' $h=h\ci I\in D\ci I$, $h= \1\ci{I^1} - \delta \1\ci{I^2}$.

On $D\ci I$ define a block $T\ci I$, 
\begin{alignat*}{2}
T\ci I h^1 & = h^2, \qquad & T\ci I h^2 &= h^1, \\
T\ci I h^3 & = h^4, & T\ci I h^4 &= h^3, \qquad T\ci I\Bigm|_{\spn\{h^k: 1\le k \le 4\}^\perp } =0. 
\end{alignat*}

If $\Delta\ci I b = \alpha h\ci I$, then the block $(\Lambda_b^1)\ci I$ of $\Lambda_b^1$ commutes with $T\ci I$. This together with the fact  that $T\ci I h\ci I =0$ implies that if $I\in \cL\cap\Aif$ and the block $T\ci I$ of a martingale transform $T$ is as described above, then multiplication operator $M_{h_I}$ commutes with $T$ 

So, if we add to $b$ any multiple of $h\ci I$, we will not be able to detect it by looking at the commutator $[M_b, T]$, which gives a example for the statement \cond1 above. 

To give an example to statement \cond2, 
take a finite  interval $I_0 =:\X$, divide it into $8$ subintervals, as it was described above (with $\delta=\delta_1$) to get the ``children'' of $I_0$, then divide each child into $8$ parts, and so on. We assume that on each step we take $\delta=\delta_n$, $\delta_n\to 0$ as $n\to \infty$. That will be our lattice $\cL$.

Let $T$ be a martingale transform on $\cL$, where each block $T\ci I$ is as described above. Notice, that $T$ is strongly $(p, \e)$ mixing (but not strongly $(p, \e, K)$ mixing). Notice also, that clearly $T$ is bounded in $L^2$.

Take an interval  $I\in \cL\setminus\Aif =\cL\setminus \{I_0\}$. 

Take $p=2$ and define $\wt h =\wt h\ci I = \delta^{-1/2}  h$, where $h=h\ci I$ is the ``Haar function'' defined above, $h= \1\ci{I^1} - \delta \1\ci{I^2}$. 

By Lemma \ref{Haar-norm},  $\| \wt h\|_2 \le 2^{1/2} |I|^{1/2}$. On the other hand, $\|\tilde h\|_\infty =\delta^{-1/2}$, so we can pick $I$ such that $\|\wt h\|_\infty$ is  as large as we want.  

Note that for $b=\wt h$, the martingale transform $T$ commutes with $\Lambda_b^1$ (and so with $\Lambda^b$), so it is easy to check that the paraproducts $\pi_b$, $\pi^*_b$ and so the commutator $[M_b, T]$ are bounded. However, as we discussed above, $\|b\|_\infty = \delta^{-1/2}$. 

So, if we consider a collection $\cC$ of disjoint intervals in $\cL\setminus \Aif$ with $\delta \to 0$, and define 
\[
b = \sum_{I\in\cC} \wt h\ci I
\]
then the commutator $[M_b, T]$ is bounded. That can be seen, for example, by noticing that $\Lambda^1_b$ commutes with $T$ (one needs to treat each block separately, which reduces it to the case $b=\wt h\ci I$), and the paraproducts $\pi_b$ and $\pi_b^*$ are ``direct sums'' of the paraproducts with $b=\wt h\ci I$, treated above.  

So we constructed an example of $b$ and a strongly $(p, \e)$ mixing martingale transform $T$ such that the commutator $[M_b, T]$ is bounded in $L^2$, but $\sup_{I\in \cL} \|\Delta\ci I\|_\infty = \infty$. 

An easy modification allows also to get an example for $L^p$.

\def\cprime{$'$}
\providecommand{\bysame}{\leavevmode\hbox to3em{\hrulefill}\thinspace}

\end{document}